\DeclareMathOperator{\supp}{supp}
\def\squarebox#1{\hbox to #1{\hfill\vbox to #1{\vfill}}}
\newtheorem{THEOREM}{Theorem}
                        {\end{THEOREM}}
\newtheorem{COROLLARY}[THEOREM]{Corollary}
                          {\end{COROLLARY}}
\newtheorem{LEMMA}{Lemma}
                      {\end{LEMMA}}
\newtheorem{CLAIM}{Claim}
                      {\end{CLAIM}}
\newtheorem{EXAMPLE}{Example}
                      {\end{EXAMPLE}}
                      {}
\newtheorem{Theorem}{Theorem}
\newtheorem{Lemma}{Lemma}[section]
\newtheorem{Def}{Definition}
\newtheorem{Remark}[Lemma]{Remark}
\newtheorem{Corollary}[Lemma]{Corollary}
\begin{document}
\setcounter{page}{1}
\pagestyle{plain}
\newcounter{C1}
\renewcommand{\thefootnote}{\fnsymbol{footnote}}

\title{The thin film equation with backwards second order diffusion}
 \author{Amy Novick-Cohen\\
Department of Mathematics\\ Technion-IIT, Haifa 32000, Israel\\ amync@tx.technion.ac.il\\
 \and\\
Andrey Shishkov\\ Institute of Applied Mathematics and
Mechanics,\\  83114 Donetsk, Ukraine\\ shishkov@iamm.ac.donetsk.ua}

\maketitle

\begin{center}{\it{Dedicated to  Roberta Dal Passo
(1956-2007)}}\end{center}

\begin{abstract}
In this paper, we focus on the thin film equation with  lower order
"backwards" diffusion  which can describe, for example,
 the evolution of thin viscous films in the presence of gravity and thermo-capillary effects, or the thin film
 equation with a ''porous media cutoff" of van der Waals forces.
We treat in detail the equation
$$u_t + \{u^n(u_{xxx} + \nu u^{m-n}u_x -A u^{M-n} u_x)\}_x=0,$$
where $\nu=\pm 1,$ $n>0,$ $M>m,$ and  $A \ge 0.$ Global existence of
weak nonnegative solutions is proven when $ m-n> -2$ and $A>0$ or $\nu=-1,$ and when
$-2< m-n<2,$ $A=0,$  $\nu=1.$ From the weak solutions, we get strong
entropy solutions under the additional  constraint that $m-n> -{3}/{2}$ if $\nu=1.$ A
local energy estimate is obtained when $2 \le n<3 $ under some
additional restrictions. Finite speed of propagation is proven when $m>n/2,$ for
the case  of ''strong slippage,"  $0<n<2,$  when $\nu=1$
based on local entropy estimates, and for the case  of ''weak
slippage," $2 \le n<3,$ when $\nu=\pm 1$ based on local entropy and
energy estimates.
 \end{abstract}

\par\noindent {\small{{\bf Keywords:} thin film equation, backwards
diffusion,
 higher order parabolic equations, degenerate parabolic
equations, finite speed of propagation}}

 \bigskip
\par\noindent{\small{{\bf AMS Subject Classifications: 35K65, 35K30, 35K35, 35G25, 76A20,
76D08}}}
\newpage

\section{Introduction}
\setcounter{equation}{0}

The thin film equation \cite{SR86}
\begin{equation} \label{thinfilm}
u_t + \{u^n(u_{xxx})\}_x=0, \quad n>0,
\end{equation}
with $n=3$ models the dynamics of thin viscous films with no slip boundary conditions, and
with $n=1$ it models Hele-Shaw flow.
Often (\ref{thinfilm}) needs to be augmented with various lower order terms
in order to take into account the presence of  additional physical
effects, and certain such equations shall be considered here. See  \cite{ODB97} for a survey and review. Some of the physical systems
which are  accommodated by the analysis in this paper include:


\smallskip\par\noindent$(i)$ the evolution of thin viscous
films in the presence of gravity and thermo-capillary effects
\begin{equation} \label{gthermoc}
u_t+ \{u^n(u_{xxx} + u^{m-n}u_x - A u^{M-n} u_x)\}_x=0,
\end{equation}
where $m,$ $n,$ $M,$ $A$ are constants such that $A \ge 0,$
$n>0$,$\,m<M,$
 as well as the  more accurate variant of (\ref{gthermoc}) given by
\begin{equation} \label{gthermoc2}
u_t + \{u^n(u_{xxx} + h'(u)u_x)\}_x=0,
\end{equation}
where $h'(u)= - \overline{\nu} G+ \frac{B_1}{u(1 + B_2 u)^{2}},$ $G, B_1,  B_2$ are positive constants, $\overline{\nu}=\pm1,$ and where $\overline{\nu}=+1(-1)$ represents stabilizing
(destabilizing) gravitational forces
\cite{ODB97,TK04,OR92}. Equation (\ref{gthermoc2}) with $A=0$,
$$u_t + \{u^n(u_{xxx} + u^{m-n}u_x)\}_x=0,$$
models  thin  films with
 thermo-capillary effects but  without gravitational effects when $m-n=-1$ \cite{Ehrhard}, or with destabilizing gravitational effects but without capillary effects when $m-n=0$. The value of $n$ in (\ref{gthermoc}), (\ref{gthermoc2}) reflects the assumptions on the
 slip conditions at the interface of the thin film with the underlying substrate, with  $n=3$ modeling no slip
 and  $0<n<3$ modeling various types of slip.

\smallskip\par\noindent$(ii)$ the equation
\begin{equation}\label{attractive}
u_t + \{u^n (u_{xxx} + h'(u) u_x)\}_x=0,
\end{equation}
describes (i) the evolution of a thin viscous film in the
presence of attractive polar forces if $h(u)=-b_1 e^{-u/b_2}$ where $b_1,$
 $b_2$ are positive constants, or (ii) the evolution of a thin
viscous film in the presence of attractive van der Waals forces if
$h(u)=B u^{-b},$ where $B<0$ is a (negative) Hamaker constant
and $b$ is a positive constant. More generally, if $\lim_{u \downarrow 0} h'(u)>0$ ($<0$), $h(u)$ is said
to represent limiting attractive (repulsive) forces. Equations of the form (\ref{gthermoc}), (\ref{attractive}) can  represent a combination of
attractive and repulsive forces, and the limiting power $m=\lim_{u \downarrow 0} u \, h''(u)/h'(u)$ can assume a  range of both positive and negative values in modeling various forces such as
polar forces, van der Waals forces, as well as ''porous media cutoff" of van der Waals forces. See \cite{MNC1,ODB97,BP94,Derjaguin,Eggers}.

\smallskip\par\noindent$(iii)$
the Hocherman-Rosenau equation \cite{HochermanR}
\begin{equation}\label{hochermanR}
u_t + \{f(u) u_{xxx} + g(u) u_x)\}_x=0,
\end{equation}
 was proposed as a generalization of the cylindrical Kuramoto-Sivashinsky equation, which  models low Reynolds number two phase cylindrical flows.
  Equation (\ref{hochermanR}) with $f(u),$ $g(u)>0$ has been used as a prototype equation for
studying the relative strength of the second and fourth order terms in determining criteria for  blow up \cite{HochermanR,BP3}.
Setting $f(u)=u^n$ and $g(u)=u^m$ or $g=h'(u)$ with $h'(u)>0$ in (\ref{hochermanR})
yields   (\ref{attractive}) with ''limiting attractive forces."

\bigskip
 To
understand these various models, we shall focus on the
equation
\begin{equation} \label{G}
u_t + \{u^n(u_{xxx} + \nu u^{m-n}u_x -A u^{M-n} u_x)\}_x=0,
\end{equation}
where $\nu=\pm 1,$ and where $n,$ $m,$ $M,$ $A$ are constants satisfying
$A \ge 0,$ $n>0,$ $M>m,$ with some further restrictions to be
imposed in the sequel.   All of the  examples outlined
   above may be written in the form (\ref{G}), except for
(\ref{gthermoc2}), (\ref{attractive}\,i) in which
\begin{equation} \label{1.7}
 h'(u)=u(1 + Bu)^{-2} \hbox{\, or \,}
h'(u)=\frac{b_1}{b_2}e^{-u/b_2},
\end{equation}
respectively. Noting that
\begin{equation} \label{boundsa}
 0<\frac{u}{(1+Bu)^2}< \frac{1}{B^2 u},
\end{equation}
\begin{equation}\label{boundsb}
 0<\frac{b_1}{b_2}e^{-u/b_2}<\frac{b_1}{b_2},
\end{equation}
for $u \ge 0,$ we see that the examples in (\ref{1.7}) have upper bounds of the
form $h'(u)=\nu u^{m-n} -A u^{M-n}$ with $\nu=1,$ $A=0,$ and
$m-n=-1$ and $m-n=0$ respectively, which facilitate the analysis
of (\ref{gthermoc2}), (\ref{attractive}\,i). Our treatment of
(\ref{G}) can be generalized to encompass  (\ref{1.7})
 as well; some remarks in this direction are included
 within the text.

In terms of the physical systems being modeled, it is reasonable
to assume zero contact angle, $u_x=0,$ and zero flux, $u^n(u_{xxx}
+ \nu u^{m-n}u_x -A u^{M-n} u_x)=0,$ along the external boundaries
of the domain, and thus  that
\begin{equation} \label{Neumanbc}
u_x=u_{xxx}=0 \hbox{\, if \,} u>0 \hbox{\, and \,} x=\pm a. \end{equation}  While these
boundary conditions are adopted here,  other boundary conditions are possible to consider.
The zero contact angle condition reflects the physical assumption that the viscous liquid film completely wets the underlying
substrate; non-zero contact angle conditions, while physical, have proven up to now to be difficult to implement in dynamical problem formulations \cite{Otto}.
A simple alternative to (\ref{Neumanbc}) is to impose periodic
boundary conditions, as we shall amplify.

The term "$\{u^n(\nu u^{m-n}u_x)\}_x$," with $\nu=+1$ in (\ref{G}),
is often referred to as a "backwards diffusion"  term, since if one
considers dynamics dominated by this term alone
$$u_t + \{u^n(u^{m-n} u_x) \}_x=0,$$
and one linearizes about a uniform positive  state, then the
resultant dynamics is given by the
 backwards (ill-posed) diffusion equation. Similarly, if $\nu=-1,$ the term $"\{u^n(\nu u^{m-n}u_x)\}_x"$  in (\ref{G})
 is often referred to as "forward diffusion," for obvious reasons \cite{CHE}.
We shall often refer to  equation (\ref{G}) with $\nu=+1$ as the
"unstable case" and to equation  (\ref{G}) with $\nu=-1$ as the
"stable case," since in the context of thin films, (\ref{G}) with
$\nu=+1$ models limiting attractive  forces which are destabilizing ("long wavelength unstable" in the terminology of \cite{BP98})
 and (\ref{G}) with $\nu=-1$ models limiting
repulsive  forces which are stabilizing \cite{ODB97}.

The degenerate Cahn-Hilliard  equation
\begin{equation} \label{DCH}
u_t+ \{u(1-u)[ -\ln u + \ln(1-u) + \alpha u +\epsilon^2 u_{xx}]_x\}_x=0,\end{equation}
whose solutions satisfy $0 \le u \le 1$ \cite{EG,CHE}, can also be said to be of the form (\ref{G})
with $\nu=-1, n=1, m=0                                 $ for $u$ near  $0$ or $1.$
Equations similar to (\ref{attractive}), (\ref{G}) also arise in modeling structure formation and the
dynamics of biofilms \cite{KD05}, as well as in modeling {the}  dislocation density in plasticity theory \cite{GrunP}.

Equation (\ref{G})  in the presence of forward
or stabilizing diffusion
\begin{equation} \label{forwardsintro}
u_t+ \{u^n(u_{xxx} - u^{m-n} u_x)\}_x=0,
\end{equation}
has  been studied somewhat more thoroughly than the backwards or unstable variant
\begin{equation} \label{backwardsintro}
u_t+ \{u^n(u_{xxx} + u^{m-n} u_x)\}_x=0.
\end{equation}
This is perhaps not  surprising, as the behavior of the thin film equation   changes qualitatively less upon adding a stabilizing term as opposed to adding
a destabilizing term, and hence  the analytical tools used in studying (\ref{thinfilm}) may be more readily adapted to its analysis.

Since (\ref{thinfilm}) is designed to model the evolution of this viscous films,  nonnegative initial
date should yield nonnegative solutions. Compactly supported initial data whose support which may spread or shrink are also physically relevant to consider.
A pioneering step in the analysis of (\ref{thinfilm}) was the proof  of the existence of weak nonnegative solutions for $1<n<3$ by Bernis \& Friedman \cite{BF90}
for  nonnegative $H^1$ initial data, which relied
   on energy and entropy estimates.
 The analysis in \cite{BF90} was  extended by Bernis \cite{B0} to the interval $0<n<3$.
  In \cite{BPW}, existence (nonexistence) was demonstrated of compactly supported spreading source type solutions to (\ref{thinfilm}) for $0<n<3$ ($n \ge 3$), and at the contact line, the solutions
were seen to behave like $(x_0-x)^2$ for $0<n<3/2$ and like $(x_0-x)^{3/n}$ for ${3}/{2}<n<3.$
By developing refined entropy estimates, the existence of strong ($C^1$ for a.e. $t>0$) nonnegative solutions was demonstrated
 by  Beretta, Bertsch  \& Dal Passo \cite{BBD95}
and by Bertozzi \& Pugh \cite{CPAM96}, and the solutions  were seen to possess the regularity of the source type solutions at the contact line. Positivity properties were seen to depend strongly
on the value of $n$ \cite{BF90,BBD95}, with "touchdown" being possible for small values of $n$ \cite{BK,BBD95}. Solutions were proven to become positive in finite time, and to converge to
their mean as $t \rightarrow \infty$ \cite{BBD95,CPAM96}.  The only nonconstant steady states  for (\ref{thinfilm}) are of the form  $(x-a)^{+}(b-x)^{+}$ for $a<b$, which do  not possess the regularity of the
   strong solutions \cite{BBD95,CPAM96}. Finite speed of propagation of the support of strong solutions for $0<n<3$ was proven in \cite{B1,B2}.
Some of the details of the various methodologies will be amplified further as we present our results.
Since these basic studies, the analysis has been developed considerably
to encompass, for example, the Cauchy problem for measured valued initial data \cite{DG}, waiting time phenomena \cite{DGGw},  higher dimensional studies of (\ref{thinfilm}) \cite{DGG,BDGG},
as well as various numerical schemes \cite{GrunR,Grun}.

\bigskip
With regard to (\ref{forwardsintro}),  existence of nonnegative
distributional solutions was proven in \cite{BP94}  for $0<n$ and
$0<m<1$. These solutions were shown {to} approach their mean as $t
\rightarrow \infty$, becoming positive in finite time. Formal
asymptotics and numerics were used to suggest the existence of
advancing fronts for $n \ge 3,$ as well as for $0<n<3$. Existence
of nonnegative solutions for $\Omega \subset \mathbb{R}^N,$
$N=1,2,3$ was then demonstrated for two special cases of
(\ref{forwardsintro}); namely, for the model for defects in
plasticity theory mentioned earlier \cite{GrunP}, and for
(\ref{DCH}), the Cahn-Hilliard  equation  \cite{EG}. In
\cite{DGS01}, an existence theory for (\ref{forwardsintro}) was
presented which encompassed the cases $n> 1/8,$ $m>-1,$ $N \le 3$
({with} $n <4$ if $N=3$), with the case $0<n<1/8$ being treatable
by adjusting the definition of the solution.   The existence
results   in \cite{BP94,DGS01} build on the results in
\cite{BF90,BBD95,CPAM96}, and in \cite{DGS01} it builds on
\cite{GrunP} as well. In particular, use is made of augmented
entropy estimates \cite{BBD95,CPAM96}, which indicate that the
contact angle is zero for the solutions obtained if $0<n<3$ and
$n-m>-2$. The constraint $n-m>-2$ allows the energy to be bounded
from below {for arbitrary nonnegative initial data $u_0 \in H^1$.} Regularity at the contact line  was confirmed  via
formal asymptotic arguments in \cite{BP94}. In \cite{DGS01},
finite speed of propagation is demonstrated for $m>0,$ $1/8<n<2,$
and estimates are obtained for the speed of propagation; infinite
speed of propagation is proven for $m<0,$ {$-2<m-n<-3/2$.}

In \cite{B97}, self-similar spreading source type solutions were shown to exist for (\ref{forwardsintro}) when $m-n=2,$ if $0<n<3,$  and not
to exist if $n \ge 3.$ The asymptotics of these solutions at the contact line was seen to match that of the source type solutions found  for (\ref{thinfilm}) in \cite{BPW}.
In terms of steady states, in \cite{LP00}
nonconstant positive periodic steady states and zero contact angle steady states are shown not to exist, though touchdown steady states with non-zero contact angle are shown to exist if $m-n>-2.$

In \cite{GrunR, Grun}, existence of positive solutions from {nonnegative} initial data was demonstrated via construction of a numerical scheme,
for (\ref{G}) with $\nu=-1$ {and $m-n<-2$.}
The inclusion of the term  $-A u^M$ in (\ref{G})
 can be expected to enhance the rate of decay of the solution to its mean, and should not have any major effect on contact angle and propagation properties.
In our analysis in the present  paper, we {closed gaps in} the parameter range for (\ref{forwardsintro}) and
(\ref{G}) with $\nu=-1,$
both in terms of existence and  the finite speed of propagation property. Moreover, local energy and entropy estimates are obtained.

With regard to (\ref{backwardsintro}),  less attention has been paid to the various existence, regularity, and finite speed of propagation properties for (\ref{backwardsintro})  than for (\ref{forwardsintro}), but there has  been longstanding interest in the qualitative predictions of (\ref{backwardsintro}),
in particular in regard to the possibilities of rupture \cite{SR86,WD82}  and blow up  \cite{HochermanR}. There has been considerable interest in various self-similar and steady state solutions for
(\ref{backwardsintro}), and their stability \cite{Sl07}.

In terms of existence, regularity, and finite speed of propagation properties for (\ref{backwardsintro}), the results up to now may be summarized as follows. In
 \cite{BP98} it is demonstrated that if $0<n<3$  and $n \le m<n+2,$  then there exist globally bounded nonnegative weak solutions to (\ref{backwardsintro})
which possess the finite speed of propagation (FSP) property, and have the same  regularity at  the contact line as was found in \cite{BPW} for (\ref{thinfilm}).
We remark  that formal asymptotics  developed in \cite{BP94} for (\ref{forwardsintro}) are equally valid for (\ref{backwardsintro}).
 The methodology in \cite{BP98}  relies on \cite{BF90,BBD95,CPAM96,BP94}, incorporating a "disjoining pressure" {potential} arising from the lower order terms  \cite{Derjaguin} into the energy used in the basic energy estimate.
 Numerical evidence is given  that blow up may occur if $m \ge n+2$.
 The case $n=1,$  $m \ge 3=n+2$ is considered in \cite{BP3}, and it is demonstrated that for compactly supported nonnegative initial data whose initial energy is negative,
 there exists a solution  which blows up in finite time. For the critical case, $n=1,$ $m=3,$ this implies blow up if the initial mass is sufficiently large \cite{WBB04}.
It {was} conjectured in \cite{BP98,BP3} that $m-n\ge 0$ constitutes a necessary condition for well posed dynamics.
The existence proof in \cite{Grun} mentioned earlier in the context of (\ref{forwardsintro}), includes the case $\nu=1,$ i.e.  (\ref{backwardsintro}) with $n>0$ and $m<n-1$, for positive initial data.
The inclusion of a term of the form  $-A u^M$ in (\ref{G}) when $\nu=1$ should eliminate the possibility of blow up, and
  should not have any major effect on contact angle and propagation properties.

The set of steady states and self-similar solutions is much richer for (\ref{backwardsintro}) than for (\ref{forwardsintro}).
For (\ref{backwardsintro}), positive periodic steady states exist, as do compactly supported "touchdown"  steady state solutions with  zero as well as with nonzero contact angles \cite{LP00}.
A study of steady states and  stability based energetic criteria  was undertaken in \cite{LPJDE} for $m-n \in [1,2).$
It has been demonstrated \cite{LP00,LPARMA,LPelectronic,Sl07} that steady states with zero contact angle exist for all
$0<n<3,$ $n\le m,$ which are stable if $m \le n+2,$ $0<n \le 2,$ marginally stable if $m \le n+2,$ $2<n<3,$ and unstable otherwise.
In \cite{BGW} for $\nu=1$, $n=3$, $m=-1$,  steady states are seen to converge to a $\delta$-distribution in the limit in which repulsive forces are neglected;
these results should perhaps  be compared with the non-single-valued profiles
seen in \cite{MNC1} which result when the term $u_{xx}$ is replaced by the mean curvature, a correction which becomes important in the singular limit.
In terms of self-similar solutions, both spreading and blow up self-similar solutions are possible. Rupture self-similar solutions have also been observed and studied \cite{ZLister,WB}. If $m=n+2$,  there  exist spreading self-similar solutions  if $0<n<3,$
  and none  if $n \ge 3$ \cite{B97}. Blow up self-similar solutions were considered in \cite{SP05}, with existence being demonstrated for $0<n<3/2$ and non-existence for $n\ge 3/2.$
   For a discussion of the stability
  of self-similar solutions, see \cite{WBB04,Sl07}.

\par\smallskip\noindent
The focus of the present paper, however, is not on blow up, steady
state solutions, and self-similar behavior, but rather on
conditions that guarantee existence, regularity, and finite speed
of propagation,  completing and enhancing what was
previously know, in order to construct a framework in which
understand, for example, the transition between rupture,
positivity and touchdown properties, global existence, and blow
up. Roughly speaking, with regard to existence  for $\nu=-1,$ $0<n<3,$ there {had been} a gap for
{$n-2 \le m \le -1;$} and now we have  existence of both weak and strong
solutions {for $n-2 < m \le -1,$} {as well as for $n-2=m$ for a constrained set of initial conditions.} For $\nu=1$, $0<n<3,$ {previous existence results had
required $m \ge n$,} and now we have existence of weak solutions in
{the interval $n-2<m<n$,} as well as strong {energy/entropy} solutions in the subinterval
$n-3/2<m<n.$ Note also that for $\nu=\pm 1,$ $0<n<3,$ in the
interval $n-2<m \le n-1,$ we do not require that the initial data
be strictly positive {\cite{BGW}}.
In terms of FSP, for $\nu=-1,$ $0<n<3,$ {there had been a gap at $0<n \le 1/8$   which has now been filled for $0<m<n+2$,
as well as a gap at $2
\le n<3,$ which has now been filled for $n/2<m<n$.} For $ \nu =1$,
$0<n<3,$ there had been a gap at $0<m<n,$ which has now been filled
for $n/2<m<n$.

What physics pertains to the interval $\nu=- 1,$ $0<n<3,$
{$n-2 \le m \le -1$} or $\nu=1,$ $0<n<3$, {$n-2 \le m<n$?}
 The thin film equation with $\nu=1,$
$0<n<3,$ $-2<m-n<0$ can reflect a "porous media" cutoff of
attractive (or repulsive if $\nu=-1$) van der Waals forces
\cite{ODB97}, thin films under the influence of thermocapillary effects
  with $n=2$ or $3$, $m-n=-1,$ $\nu=1$,
\cite{Ehrhard} (with other values of $n,$ $0<n<3,$ also being possible to
consider), or a restricted Hocherman-Rosenau equation
\cite{HochermanR}. If one takes $\nu=1,$ $n=2$ or $3$, $m \le -1$,
which can model, for example, the effects of a Lennard-Jones potential or a thin
film on a layered solid substrate in the limit in which the
limiting repulsive forces are neglected,
 the dynamics are known to lead to rupture \cite{WD82,ZLister,WB}.

\par\bigskip\noindent
 As a first step in this direction (enhancing existence), the
existence of weak nonnegative solutions (see Definition
\ref{weaksolution}) is demonstrated in $\S 2$. This is accomplished
by means of the basic energy estimate \cite{BF90,BBD95,CPAM96}
\begin{equation} \label{globalenergyintro}
\sup_{0 \le t \le T} \int_{\Omega} u_x^2 \, dx+ \int_0^T \int_{\Omega}
u^n (u_{xxx} +\nu u^{m-n}u_x -A u^{M-n}u_x)^2 \, dx \, dt \le C,
\end{equation}
where $\Omega=(-a, a),$  $a \in (0, \, \infty)$ is arbitrary,    $C$ depends only on the problem
parameters and the initial conditions, and $0 < T < \infty.$ The estimate
(\ref{globalenergyintro})
  holds
 for   $0<n,$ $-2<m-n,$
$m<M,$ with the additional constraint that  $m-n<2$ if $\nu=1$ and  $A=0$. Additionally,
an entropy estimate  \cite{BF90,BBD95,CPAM96} is obtained, using  Gronwall's
inequality  for regularized solutions. The use here of
Gronwall's inequality, which explicitly depends on the
regularization parameter,  differs somewhat from elsewhere {\cite{GrunR}},  allowing us to control the lower order, possibly singular,
forcing terms.
 These estimates, together with mass conservation,
\begin{equation} \label{massconservation}
\int_{\Omega} u(x, \, t) \,dx=\int_{\Omega} u(x, \, 0) \, dx,
\end{equation}
 imply global bounds from which existence of
weak nonnegative solutions based on uniform H\"{o}lder continuity and continuation arguments. See Bernis \& Friedman \cite{BF90},
Giacomelli \cite{G99}.

\smallskip
To obtain the existence of a strong (${\mathcal{C}}^1(\Omega)$ for
a.e. $t>0$) solution, a local entropy estimate is  derived in $\S
3,$ following \cite{BBD95}. For  $\nu=1,$ the additional constraint
$m-n>-\frac{3}{2}$ is imposed and the local entropy estimate
may be written as
\begin{multline} \label{localentropyintro}
\frac1{\alpha(\alpha+1)}\int_{\Omega} \zeta^4 u^{1+\alpha}(x, \,
T) \, dx +  A \int_{Q_T} \zeta^4 u^{\alpha + M-1} u_x^2 \, dx\, dt+ \\ c_{1}
\Biggl[ \int_{P} \zeta^4 {u}^{\alpha + n -2 \gamma +1}
(u^{\gamma})_{xx}^2\, dx\, dt  + \int_{Q_T} \zeta^4
{u}^{\alpha +n-3} {u}_x^4\, dx\, dt \Biggr] \le\\
 c_{2} \int_{Q_T}( |\zeta_x|^4 + |\zeta \zeta_{xx}|^2) u^{n+\alpha
+1}\, dx dt + c_{3} \int_{Q_T} |(\zeta^3 \zeta_x )_x| {u}^{\alpha + m + 1}\, dx\,dt+\\
c_{4} \int_{Q_T} \zeta^4 u^{\alpha + 2m -n +1}\,dx\, dt+
\frac1{\alpha(\alpha+1)}\int_{\Omega} \zeta^4 {u_0}^{\alpha+1} \,
dx,
\end{multline}
where $Q_T=\Omega \times (0, \, T),$  $P=\overline{\Omega \times (0, \, \infty)} \setminus \{ u=0 \hbox{\, or \,} t=0 \}$, and which holds for certain $\alpha \in (\max\{-2m+n-1,\,
-m-1\},\,2-n)\setminus \{0,\, -1\}$ and for $\gamma$ satisfying
(\ref{a2.6}).  For  $\nu=-1,$ a similar estimate is obtained
without additional restrictions. For both  $\nu=\pm1,$ the
local entropy estimate implies the global entropy estimate
\begin{equation} \label{globalentropyintro}
c_5 \int_{Q_T} u^{\alpha + n -3} u_x^4 \, dx\, dt + c_6 \int_{Q_T}
u^{\alpha + n -1} u_{xx}^2 \, dx\, dt \le c_7\, T, \quad 0<T,
\end{equation}
  which holds also for (\ref{thinfilm}), and which implies
  strong solutions  and certain positivity properties \cite{BBD95,CPAM96}.
We also present a  refinement of Theorem 3.1 from \cite{BBD95}
(see {Corollary \ref{3.4}} in $\S 3$), which clarifies the set of $\beta$ for
which $C^1([-a,\,a])$ regularity for almost every $t>0$ is implied
for $u^{1/\beta}(\cdot,\, t)$  based on the local entropy estimates and the properties of the initial data. As in \cite{GS}, a
local energy estimate is derived for $2 \le n <3,$ under
the additional restrictions that $m>({2n} - {2})/{3}$ if $2 \le n
< {5}/{2}$ and $m>n-{3}/{2}$ if ${5}/{2} \le n<3.$

In $\S 4$ and $\S 5$, we investigate  conditions for the finite speed of propagation property (FSP)
 for  (1.1). Two different techniques have been developed for studying the  FSP property which both rely on energy and
 entropy estimates. The first method was developed by F. Bernis \cite{B1,B2}
 for the standard thin-film equation, (\ref{thinfilm}).
  For  $0<n<2,$  Bernis in \cite{B1} introduced the weighted  integral entropy function
  \begin{equation} \label{Bernis1}
  E_T(r):=\int_0^T \int_{-r}^r (r-|x|)^4 \Bigl(u^{\frac{\alpha+n+1}{{2}}}\Bigr)_{xx}^2 \, dx\,dt, \quad 0\le r<r_0, \quad 0<T,
  \end{equation}
  for $0<r_0<a,$ which corresponds to the third term on the left hand side of (\ref{localentropyintro}) with $\zeta(x)=(r- |x|)^4_+.$
  By taking $\hbox{\,supp\,} u_0 \subset\Omega\setminus \{ |x|<r_0\}$ and   deriving a nonlinear ordinary fourth order differential inequality
  for $E_T(r),$  it was possible to demonstrate the FSP property. For  $2 \le n <3,$ Bernis demonstrated the FSP property in \cite{B2}
  based on  a similar ordinary differential
  inequality for an appropriately defined weighted  integral energy function.

  An alternative method for studying  propagation properties for various thin-film like equations was proposed for the case
  $0<n<2$ in \cite{KSh} (see also the references and comments in \cite{HSh}). This method relies on functional rather than on differential
  inequalities for various entropy and energy functions. The emphasis in this technique is on cut off functions rather than on
  weight functions, which lead to functional dependence on parameterized subdomains of $\Omega.$ The main feature of the inequalities
  which are derived is that the minimal power of $u(x,t)$ in the terms on the left hand side of the inequality is strictly less
  than the minimal power of $u(x,t)$ which appears in the terms on the right hand side. Such inequalities in conjunction with the functional
  Stampacchia Lemma (see Lemma {6.1}) and its generalization for the system of functional inequalities ({see} Lemma {6.2}) can be used to demonstrate the FSP property in various contexts. In the present paper, we shall
  rely primarily on this second method.

  This latter method (the FI-method) has been used \cite{HSh} to obtain sharp
  estimates on the speed of propagation of the support of solutions to equation (1.1) with arbitrary $u_0\in L_1 (\Omega)$, $\Omega\setminus\text{supp}\,u_0 \neq \emptyset$.
   In \cite{DGS01}, using this method for  the thin film equation with forward diffusion (\ref{forwardsintro})
  with source type initial data,  $m_{crit}=n+2$  was shown to be a
   transitional value between the long and short time asymptotics of
   the Barenblatt porous media behavior and the predicted behavior of source type solutions for the standard thin film equation \cite{BPW}.
     Using the FI-method,    sharp sufficient conditions were obtained in \cite{DGGw} relating  the flatness at the edge of the
  support of $u_0$ to the phenomenon of a waiting time for propagation of the support of  solutions.
   Moreover  for equation (1.1) with an
  additional nonlinear absorption term, in \cite{Sh2} a  sharp
  sufficient condition on the flatness of $u_0$   guaranteeing
  the onset of shrinkage of the support was obtained, as well as estimates from below on the shrinkage rate.
    In future work, we hope to undertake similar analyzes for (\ref{G}) with $\nu=1.$

 For (\ref{G}) with "forward diffusion" (with  $ A=0$),
conditions for FSP  as well as sharp estimates for the  speed of propagation
were obtained in \cite{DGS01} for the  "strong slippage" case
$(0<n<2)$ only. The results which we obtain here are for (\ref{G}) with $\nu=-1,$ with $2<n<3,$ as well as for the much more delicate case of backward
diffusion, $\nu=1$, where the lower order diffusion term "encourages" the destruction of the
FSP property at all values of  $m$. Our analysis makes
use of some ideas from \cite{GS}. The proof given in $\S 4$ is for
the "strong slippage case" in which $0<n<2,$ and requires that
$m>n/2$ if $\nu=1.$ It relies on  the local entropy estimate
from $\S 3$ for $\alpha$ positive and
 the Stampacchia Lemma for systems. We demonstrate
that if $\hbox{\,supp\,} u_0 \subset \{ x \le 0\},$ then there
exists a continuous function, $s(t)$ satisfying $s(0)=0,$ and a
positive time $T_0$ such that $\hbox{\,supp\,} u(\cdot, \, t)
\subset [-a, \, s(t)],$ $s(t)<a \quad \forall\,t<T_0,$ and $s(T_0)=a.$
The proof in $\S 5$ is for the "weak slippage case" in which
$2<n<3,$ and requires that $m>n/2.$ It is based on  combining
 local entropy estimates for $-1<\alpha<0$ with the local
energy estimates from $\S 3$, and again makes use of the
Stampacchia Lemma for systems. We conjecture that the restriction
$m > n/2$ for $\nu=1$   is sharp or close to sharp. {Note
that in the stable case $\nu=-1,$ the value $m=\frac{n}{3}$ is critical
in the context of  asymptotics near  the edge of traveling wave solutions
with constant speed of propagation (see \cite{BP94}). Since the asymptotics in \cite{BP94} are valid also for the unstable
case $\nu=1$, we expect that FSP-property fails in the unstable case for some $m$ near
to $\frac{n}{3}.$ Thus  we suspect expect that for $\nu=1$, the restriction $m>n/2$ is sharp or close to sharp.

\smallskip
  As to further questions and future directions, a question of interest in the present context is to understand exactly how bad  the backwards diffusion can become while maintaining something of the smoothing properties of the
  thin film equation with $n>0$. Thus one should like to identify  transitional values in terms of rupture, positivity, touchdown, and infinite as opposed
  to finite spreading rates. Finally, one should like to extend all aspects of the analysis to
  higher dimensions.

\smallskip
The outline of  the  paper is as follows. The  existence of weak
non-negative solutions is proven in $\S 2$. Existence of strong
energy-entropy solutions is demonstrated in $\S 3.$ Finite speed of
propagation is proven in $\S 4$ for the case of weak slippage and in
$\S 5$ for the case of strong slippage.

\section{Weak solutions}
\setcounter{equation}{0}

In this section, we follow  Bernis \& Friedman \cite{BF90},
relying on  local parabolic regularity theory \cite{Eidelman69,G99}
to obtain global existence of weak solutions,  defined below. We regularize the initial data and use Gronwall's inequality in the
context of the entropy estimate. This allows us to avoid regularizing the lower order terms \cite{BP98,BP3}, and allows
us to widen the range of validity of the results.

\smallskip\par\noindent{\bf{Notation.}} Let
 $\Omega=(-a, \, a)$ where $a\in(0,\,\infty)$ is arbitrary,  $Q_t=\Omega\times(0, \, t),$
 $0<t<\infty,$
and set $P_t=\overline{Q}_t \setminus \{ u=0 \hbox{\, or \,} t=0\},$
$Q=Q_{\infty},$ $P=P_{\infty}.$

\bigskip
Let us  consider the problem
$$({\mathbb{P}})\left\{\begin{array}{l}
u_t+ (u^n(u_{xxx} +\nu u^{m-n} u_x - A u^{M-n} u_x))_x=0, \quad
(x, \,
t)\in Q_T,\\[1ex]
u_x(\pm a, \, t)= u_{xxx}(\pm a, \,
t)=0
\hbox{\, when \,} u(\pm a, \, t)\ne0, \quad t \in (0, \, T),\\[1ex]
 u(x, \,
0)=u_0(x), \quad x \in \overline{\Omega},
\end{array} \right.
$$
where $\nu= \pm 1$ and $0<T<\infty.$
We  shall assume  the initial conditions  to satisfy
\begin{equation} \label{ic} u_0\in H^1(\Omega),\quad u_0 \ge
0,\quad  u_0 \equiv\!\!\!\!\!\setminus\,\,\,\, 0.\end{equation}
 While we shall  look for solutions on a finite
interval, we can always consider the parallel Cauchy and periodic problems
obtained by extending the initial conditions via periodicity and
reflection. This will allow us, for example,  to directly
implement the generalized Bernis inequalities for nonnegative
periodic functions, \cite[Lemma B.1]{GS} in obtaining local energy estimates in $\S 3$. See also the remarks in \cite{BBD95,BP3}.

\smallskip
\begin{Def} \label{weaksolution} A function $u \in {\mathcal{C}}^{0, \, 1/2,
\, 1/8}({\bar{\Omega}} \times [0, \, \infty)) \cap L^{\infty}([0, \,
\infty);\, H^1(\Omega))$ is said to be a {\it{weak solution}} of
$({\mathbb{P}})$  if:
\smallskip\par\noindent(a)  $u \in C^{4,\,1}(P), \quad u \ge 0,$

\smallskip\par\noindent(b) $u_x(x, t)=u_{xxx}(x,t)=0$ when $u(x,t)
\ne 0,$ for $(x,t) \in \partial \Omega \times (0, \infty),$

\smallskip\par\noindent(c) $J\equiv u^{n}(u_{xxx} +\nu u^{m-n} u_x - A
u^{M-n} u_x) \in L^2(P),$

\smallskip\par\noindent(d) for all $\phi \in {\rm{Lip}}(\bar{\Omega} \times
(0, \, \infty))$ with compact support, $u$ satisfies:
\begin{equation} \label{weak}
\int_Q u \,\phi_t \, dx dt + \int_P u^n\,(u_{xxx} +\nu u^{m-n} u_x -
A u^{M-n} u_x) \,\phi_x \, dx dt =0,
\end{equation}

\smallskip\par\noindent(e) $u(x, \, 0)=u_0(x)$ for $x \in {\bar{\Omega}}$.
\end{Def}

\bigskip
Given this definition, we   formulate

\begin{Theorem} \label{weaksolutions} For the following range of parameter values:
$$\begin{array}{ll}
(i) & \nu=-1,\, 0<n,\, 0 \le A,\, n-2< m <M,\\[1ex]
(ii) & \nu=1,\, 0<n,\, 0<A,\, n-2< m <M,\\[1ex]
(iii) & \nu=1,\, 0<n,\, 0=A,\, n-2 < m < n+2,
\end{array}$$
there exists a solution to $(\mathbb{P})$ in the sense of Definition
\ref{weaksolution} for  $u_0$ satisfying
(\ref{ic}).
\end{Theorem}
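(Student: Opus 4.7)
The plan is to implement the Bernis-Friedman existence strategy \cite{BF90}, augmented by the initial-data regularization and Gronwall-based entropy trick announced in the introduction. I replace the mobility $u^n$ by the nondegenerate approximation $f_\epsilon(u) = (u^2 + \epsilon)^{n/2}$ and mollify the datum to $u_{0,\delta} \in C^{4+\alpha}(\overline\Omega)$ with $u_{0,\delta} \ge \delta > 0$, $u_{0,\delta} \to u_0$ in $H^1(\Omega)$, and boundary data compatible with $(\mathbb{P})$. Local parabolic theory for uniformly parabolic fourth-order equations \cite{Eidelman69,G99} furnishes a unique strictly positive classical solution $u_{\epsilon,\delta}$ on a maximal time interval, which the uniform a priori estimates below will extend to an arbitrary $[0,T]$.

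The key step is a pair of integral estimates. Multiplying the regularized equation by $-u_{xx} + H'(u)$ with $H''(u) = -\nu u^{m-n} + A u^{M-n}$ and integrating by parts with the no-flux boundary conditions yields the energy identity
\begin{equation*}
\sup_{0 \le t \le T}\!\int_\Omega\!\bigl[\tfrac{1}{2}u_x^2 + H(u)\bigr]dx + \!\int_0^T\!\!\int_\Omega\!f_\epsilon(u)\bigl(u_{xxx} + \nu u^{m-n}u_x - A u^{M-n}u_x\bigr)^2\,dx\,dt \le C,
\end{equation*}
provided $H$ is bounded below uniformly in $\epsilon$. In case (i) this is direct since $H \ge 0$; in case (ii) the positive $A u^{M-n+2}$ term dominates the destabilizing $\nu u^{m-n+2}$ contribution at infinity because $M > m$, with both pieces locally integrable since $m - n > -2$; and in case (iii) the one-dimensional Gagliardo-Nirenberg estimate $\|u\|_{L^\infty}^p \le C\|u_x\|_{L^2}^{p-1}\|u\|_{L^1}$ together with mass conservation lets me absorb $\int u^{m-n+2}\,dx$ into a fraction of $\int u_x^2\,dx$ precisely when $m - n < 2$, the scaling threshold at which the destabilizing term can no longer be dominated by the biharmonic part. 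Next, multiplying by $G_\epsilon'(u)$ with $G_\epsilon''(u) = 1/f_\epsilon(u)$ and integrating by parts gives
\begin{equation*}
\frac{d}{dt}\int_\Omega G_\epsilon(u)\,dx + \int_\Omega u_{xx}^2\,dx = \mathcal R_\epsilon(t),
\end{equation*}
in which the remainder $\mathcal R_\epsilon$ from the lower-order fluxes $\nu u^{m-n}u_x$ and $-A u^{M-n}u_x$ is controlled by Cauchy-Schwarz against $\int u_{xx}^2$ combined with Gronwall's lemma applied to $t \mapsto \int G_\epsilon(u)\,dx$. As emphasized in the introduction, the resulting Gronwall constant is allowed to depend on $\epsilon$, which frees us from regularizing the singular lower-order terms themselves. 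Mass conservation follows from integrating the equation directly.

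With these bounds the family $\{u_{\epsilon,\delta}\}$ is uniformly bounded in $L^\infty(0,T;H^1)$ and the flux $J_{\epsilon,\delta}$ is uniformly bounded in $L^2(Q_T)$; the equation then yields uniform control of $u_t$ in $L^2(0,T;(H^1)^*)$. The Bernis-Friedman interpolation gives equicontinuity in $\mathcal{C}^{0,1/2,1/8}(\overline Q_T)$, so Arzela-Ascoli produces a H\"older-continuous limit $u \ge 0$ along a subsequence, and local parabolic regularity on compact subsets of $P = \{u > 0\}$ promotes this to $C^{4,1}_{\mathrm{loc}}(P)$ convergence, identifying the weak limit of $J_{\epsilon,\delta}$ with $u^n(u_{xxx} + \nu u^{m-n}u_x - A u^{M-n}u_x)$ on $P$. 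The weak formulation and the boundary conditions (imposed pointwise wherever $u(\pm a, t) > 0$) pass to the limit, and a diagonal extraction over $T \to \infty$ gives a solution on $[0,\infty)$. The principal obstacle is case (iii): without either the stabilizing sign of $\nu$ or the $A$-cutoff, controlling the lower-order destabilizing potential rests entirely on the sharp Gagliardo-Nirenberg threshold $m - n < 2$, above which blow-up of $\|u_x\|_{L^2}$ is known to occur \cite{BP3,WBB04}. A secondary technical nuisance is the singularity of $u^{m-n}u_x$ near $u = 0$ when $m < n$, which is what necessitates the $\epsilon$-dependent Gronwall step in the entropy inequality; the companion lower bound $m - n > -2$ is what ensures that $H$ remains locally integrable at the origin and hence furnishes a meaningful energy.
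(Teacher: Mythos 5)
Your overall plan (regularize the mobility and the data, prove the energy estimate with the case split (i)--(iii), close an entropy estimate with an $\epsilon$-dependent Gronwall argument, then use mass conservation, the uniform H\"older bound and Arzel\`a--Ascoli to pass to the limit) is the paper's scheme, and your treatment of the potential $H$ --- domination by the $A$-term when $M>m$, Gagliardo--Nirenberg plus mass conservation exactly under $m-n<2$ in case (iii) --- matches the paper's estimates (\ref{A1})--(\ref{A2}). The genuine gap is your choice of regularized mobility $f_\epsilon(u)=(u^2+\epsilon)^{n/2}$. Fourth-order equations have no maximum principle, so the strict positivity of $u_{\epsilon,\delta}$ provided by local theory is not propagated in time, and nothing in your uniform estimates controls $\min u_{\epsilon,\delta}$ away from zero. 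With a mobility bounded below by $\epsilon^{n/2}$, the entropy $G_\epsilon$ defined by $G_\epsilon''=1/f_\epsilon$ remains \emph{bounded} near $u=0$ (and for $u<0$), so the Gronwall bound on $\int_\Omega G_\epsilon(u_{\epsilon,\delta})\,dx$ carries no positivity information; moreover it cannot absorb the singular remainder $\int u^{2(m-n)+2}\,dx\,dt$ produced by the lower-order flux, which is the whole point of the Gronwall step when $n-2<m<n$ (in the paper this is estimate (\ref{13}), absorbed via the inequality (\ref{2.17m})). Since Theorem \ref{weaksolutions} allows $m-n\in(-2,0)$, the term $u^{m-n}u_x$ is singular at $u=0$ and undefined for $u<0$, so once an approximant touches zero your approximate problem itself breaks down: global existence of the approximants --- which you assert will follow from ``the uniform a priori estimates below'' --- is precisely the step that fails.

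The paper's regularization (\ref{deff}), $f_\epsilon(s)=|s|^{n+4}/(\epsilon|s|^n+s^4)$, together with the lifted data (\ref{icr}) satisfying $u_{0\epsilon}\ge u_0+\epsilon^{\theta}$, is not a cosmetic variant: it degenerates like $s^4/\epsilon$ at $s=0$, so $1/f_\epsilon(s)\ge \epsilon s^{-4}$ and hence $G_\epsilon(s)$ is bounded below by a constant times $\epsilon s^{-2}$ as $s\downarrow 0$. This does two things at once: the singular terms on the right of (\ref{13}) are bounded by $\frac{c}{\epsilon}\int_{Q_t} G_\epsilon(u_\epsilon)\,dx\,dt$, so the ($\epsilon$-dependent) Gronwall inequality closes and yields (\ref{49}); and the resulting bound forces $u_\epsilon\ge 4\sigma(\epsilon)>0$, after which the continuation argument (via the integrated variable $v_\epsilon$ and parabolic regularity) makes the approximants global and classical, which is what legitimizes every subsequent integration by parts and the identification of the limiting flux. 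To repair your proof you must either replace $(u^2+\epsilon)^{n/2}$ by a regularization that degenerates at least like $s^4/\epsilon$ at the origin (as in \cite{BF90} and the paper), or supply an independent proof that your approximants stay uniformly positive on $[0,T]$; as written, that mechanism is missing.
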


We shall find a weak solution to ($\mathbb{P}$) as
the limit of a subsequence of smooth positive solutions to a
regularized  problem, with
regularized initial conditions, ${u_0}_{\epsilon}.$ We shall require
that for  $\epsilon>0$ and for  some $\lambda \in (0, \, 1),$ $\theta \in (0, \, 2/5],$
${u_0}_{\epsilon}$ satisfies
\begin{equation} \label{icr}
\begin{array}{l}
 {u_0}_{\epsilon} \in {\mathcal{C}}^{4, \, \lambda}(\bar{\Omega}),
 \quad
   {u^{\prime}_{\epsilon}}_0(\pm a)=u^{\prime\prime\prime}_{{\epsilon}_0}(\pm
 a)=0, \quad u_0 + \epsilon^{\theta} \le {u_0}_{\epsilon} \le u_0
 +1,\\ [1ex]
  {u_{\epsilon}}_0 \rightarrow u_0 \hbox{\, in \,} H^1((-a, \, a))
 \hbox{\, as \,} \epsilon \rightarrow 0.
 \end{array}
 \end{equation}
Following  \cite{BF90}, we set
\begin{equation} \label{deff}
f_{\epsilon}(s)=\frac{|s|^{n+4}}{\epsilon |s|^n + s^4};
\end{equation}
when $0<s \ll 1,$ $f_{\epsilon}(s) \approx \frac{s^4}{\epsilon}$ if $0<n<4$ and $f_{\epsilon}(s) \approx|s|^n$ if $n \ge 4,$
which shall allow us to guarantee positivity of the approximants $u_{\epsilon}$ for $t>0$.
  Here and in the section which follows,
$c_i,$ $d_i$  denote  positive constants that are independent of
$\epsilon,$ and $C_i(t)$ denotes a positive increasing function
defined on $(0, \, \infty)$ that is independent of $\epsilon;$
$c_i,$ $d_i,$  $C_i(t)$ may depend on $\Omega,$ $u_0,$ and the
problem parameters, and their value may change from line to line.

\begin{proof}
Let us define the approximating  problem
$({\mathbb{P}}_{\epsilon})$
$$({\mathbb{P}}_{\epsilon}) \left\{\begin{array}{ll}
u_t + \{f_{\epsilon}(u)(u_{xxx} + \nu u^{m-n} u_x - A u^{M-n} u_x
) \}_x=0,& (x, \, t) \in Q_T,\\[1ex]
u_x(\pm a,\,t)=u_{xxx}(\pm a,\,t)=0,& t \in (0, \, T),\\[1ex]
 u(x, \,
0)={u_0}_{\epsilon}(x), & x \in \overline{\Omega}.
\end{array} \right.
$$
Problem $({\mathbb{P}}_{\epsilon})$ possess a unique maximal
positive solution, $u_{\epsilon},$ such that $u_{\epsilon} \in
{\mathcal{C}}^{4,\, \lambda,\, \lambda/4\,}(\bar{\Omega} \times
[0, \, \tau_{\epsilon})),$ $\tau_{\epsilon} > 0$, see
\cite[Theorem 6.3, p 302]{Eidelman69} as well as the remark
following the proof given there. That the associated Cauchy problem with periodically reflected
initial data
maintains the periodicity and reflection properties,
can be seen by translating and reflecting the
solution, then  invoking uniqueness of {the} solutions to the Cauchy
problem.

 Testing
$({\mathbb{P}}_{\epsilon})$ with $\phi \equiv 1$ and recalling
(\ref{ic}), (\ref{icr}), it follows that
\begin{equation} \label{meanconserv}
0 < {\overline{u_{\epsilon}}}(t) = \overline{{u_0}_{\epsilon}} \le
\overline{u}_0 +1, \quad t \in (0, \, T),
\end{equation}
where $\overline{v}:=|\Omega|^{-1}\int_{\Omega} v.$ The equality ${\overline{u_{\epsilon}}}(t) = \overline{{u_0}_{\epsilon}}$ in (\ref{meanconserv})
expresses {\it{mass conservation}}. Note that
(\ref{meanconserv})  also  holds  for  solutions to similarly defined approximating
problems for  (\ref{gthermoc2}),
(\ref{attractive}\,i).

 Setting
$$h(s)=\left\{\begin{array}{ll}
\frac{\nu s^{m-n+1}}{m-n+1} - A \frac{s^{M-n+1}}{M-n+1}, & m,M
\ne \{n-1\},\\[1ex]
\nu \ln s - A \frac{s^{M-n+1}}{M-n+1},& m=n-1,\\[1ex]
\frac{\nu s^{m-n+1}}{m-n+1} - A \ln s, & M=n-1,
\end{array}\right.$$
and testing $({\mathbb{P}}_{\epsilon})$ with
$-{u_{\epsilon}}_{xx}-h(u_{\epsilon}),$ we obtain the \textit{energy estimate} \cite{BF90,EG,Derjaguin}
\begin{multline}\label{2.5'}
\int_{\Omega} \Bigl[\frac{1}{2}{u_{\epsilon}}_x^2 -
H({u_{\epsilon}})\Bigr]\, dx + \int_{Q_t}
f_{\epsilon}(u_{\epsilon})({u_{\epsilon}}_{xxx} +
h'(u_{\epsilon}){u_{\epsilon}}_x)^2\, dx\, dt=\hspace{1cm}\\  \int_{\Omega}
\Bigl[ \frac{1}{2}{{u_0}_{\epsilon}}_x^2 -
H({u_0}_{\epsilon})\Bigr]\, dx,\hspace{2cm}
\end{multline}
where
$$H(s)=\left\{\begin{array}{ll}
\frac{\nu s^{m-n+2}}{(m-n+2)(m-n+1)} - \frac{A
s^{M-n+2}}{(M-n+2)(M-n+1)}, & m,M
\ne \{n-1\},\\[1ex]
\nu (s \ln s -s) - \frac{A s^{M-n+2}}{(M-n+2)(M-n+1)},& m=n-1,\\[1ex]
\frac{\nu s^{m-n+2}}{(m-n+2)(m-n+1)} - A (s\ln s-s), & M=n-1.
\end{array}\right.$$

\bigskip
Let $\nu=1$ and  $-1 \le m-n <2.$ Then noting that
\begin{equation} \label{a00}
-1 \le s \ln s -s < s^2, \quad 0 <s,
\end{equation}
recalling the Gagliardo-Nirenberg inequality
$$||u||_{L^p(\Omega)} \le c_1 ||u_x||_{L^2(\Omega)}^{1/2} ||u||_{L^1(\Omega)}^{1/2} + c_2
||u||_{L^1(\Omega)}, \quad 1 <p<4,
$$
and that the approximating solutions are  positive and satisfy
(\ref{meanconserv}), it follows that
\begin{equation} \label{A1}
\int_{\Omega} H(u_{\epsilon})\, dx \le \frac{1}{4} \int_{\Omega}
{u_{\epsilon}}_x^2 \,dx + c_3 \Bigl(\int_{\Omega}
u_{\epsilon} \, dx \Bigr)^{c_4}=\frac{1}{4} \int_{\Omega}{u_{\epsilon}}_x^2\, dx
+ c_5.
\end{equation}
Suppose  the restrictions on the parameters stated in Theorem 1 hold
and that $-2<m-n<-1$ if $\nu=1.$
 Then recalling (\ref{a00}), and noting that
for $A_1, \, A_2>0,$ $0< \alpha < \beta,$
$$A_1 s^{\alpha} - A_2 s^{\beta} <  A_1 \Bigl( \frac{A_1 \alpha}{A_2 \beta}\Bigr)^{\frac{\alpha}{\beta-\alpha}},
\quad 0<s,$$
and that by H\"{o}lder's inequality, if $0< \alpha <1,$ then
$$\int_{\Omega} |u|^{\alpha} \, dx \le \alpha \int_{\Omega} |u| \, dx + (1-\alpha)|\Omega|,$$
we obtain that
\begin{equation} \label{A2}
\int_{\Omega} H(u_{\epsilon})\,dx \le c_6 (\overline{u}_0 +1)=c_7.
\end{equation}

From (\ref{2.5'}), (\ref{A1}), (\ref{A2}), we may now conclude  that
\begin{equation} \label{energyestimate}
\frac{1}{4} \int_{\Omega} {u_{\epsilon}}_x^2(x,\,t)\, dx + \int_{Q_t}
f_{\epsilon}(u_{\epsilon})( {u_{\epsilon}}_{xxx} + \nu
{u_{\epsilon}}^{m-n} {u_{\epsilon}}_x - A {u_{\epsilon}}^{M-n}
{u_{\epsilon}}_x)^2\, dx\, dt \le c_8.
\end{equation}

\begin{Remark}
The constraint that $m-n<2$ when $\nu=1,$  $A=0$ has been imposed to guarantee global existence.
If $\nu=1,$  $A=0,$
and $m-n=2,$   (\ref{energyestimate}) remains valid
if $\overline{u_0}$ is sufficiently small, and global existence is again implied. See \cite{BP98}.
\end{Remark}

\smallskip
From (\ref{meanconserv}), (\ref{energyestimate}), we obtain that
\begin{equation} \label{a0}
||u_{\epsilon}||_{L^{\infty}(0, \, t; \, H^1(\Omega))} \le c_8, \quad  ||f_{\epsilon}^{1/2}({u_{\epsilon}}_{xx} + h(u_{\epsilon}))_x\,||_{L^2(Q_t)} \le c_9.
\end{equation}
 The following argument from \cite{BF90} is by now standard. Noting that $u_t=-J_x,$
   (\ref{a0}) implies that
\begin{equation} \label{a0a}
||u_t||_{L^2(0, \, t; \, H^{-1}(\Omega))}, \; ||J||_{L^2(Q_t)} \le
c_{10},
\end{equation}
and the estimates (\ref{a0}), (\ref{a0a}) can be seen to imply the uniform H\"{o}lder estimate
\begin{equation} \label{Holder}
||u_{\epsilon}||_{{\mathcal{C}}^{0, \, 1/2, \, 1/8}(\overline{Q_t})}
\le c_{11}.
\end{equation}
See \cite{BF90} for details.

\begin{Remark} It is also possible to implement the above
discussion when $u^n$ is replaced by $f(u)$ in ($\mathbb{P}$), for
 $f(u) \in {\mathcal{C}}(0, \, \infty) \rightarrow
\mathbb{R}^+,$ and to work with \cite{BF90}
$$f_{\epsilon}(u)=\frac{f(u) u^4}{\epsilon f(u) + u^4},$$
under suitable assumptions on $f(u)$.\end{Remark}

\bigskip
We now demonstrate roughly as in \cite{BF90,G99}
 that $u_{\epsilon} \ge 4 \sigma
>0$ in ${\overline{\Omega}}\times[0, \, \tau_{\epsilon}],$ where $\sigma=\sigma(\epsilon).$
On $[0, \, \tau_{\epsilon}),$ recalling (\ref{Holder}) we know that $0< u_{\epsilon}(x, \, t)
<\tilde{A},$ for some $\tilde{A}$ which is independent of $\epsilon$. Multiplying the  equation  in
$({\mathbb{P}}_{\epsilon})$ by $G_{\epsilon}'(u_{\epsilon})$ \cite{NCQ,BF90}, where
$$G_{\epsilon}(s)=-\int_s^{\tilde{A}} g_{\epsilon}(r) \, dr, \quad
g_{\epsilon}(s)=-\int_s^{\tilde{A}} \frac{dr}{f_{\epsilon}(r)},$$
 and integrating,
\begin{equation}\label{47}
\int_{\Omega} G_{\epsilon}(u_{\epsilon}(x,t))\, dx + \int_{Q_t}
[{u_{\epsilon}}_{xx}^2 - ( \nu {u_{\epsilon}}^{m-n} -  A
{u_{\epsilon}}^{M-n}) {u_{\epsilon}}^2_x]\, dx\, dt=\int_{\Omega}
G_{\epsilon}({u_0}_{\epsilon})\, dx.
\end{equation}
If $m-n \neq -1,$ then  integrating the term
$-\int_{Q_T} \nu u_{\epsilon}^{m-n} {u_{\epsilon}}_x^2$ by parts,
\begin{multline} \label{12}
\int_{\Omega} G_{\epsilon}(u_{\epsilon}(x,\, t))\, dx + \int_{Q_t}
{u_{\epsilon}}_{xx}^2\, dx\, dt +
A \int_{Q_t} {u_{\epsilon}}^{M-n} {u_{\epsilon}}_x^2 \, dx\, dt\\
\le \frac{1}{2} \int_{Q_t} {u_{\epsilon}}_{xx}^2 \, dx dt + \frac{1}{2(m-n +
1)^2} \int_{Q_t} {u_{\epsilon}}^{2 m- 2n +2}\, dx\, dt + \int_{\Omega}
G_{\epsilon}({u_0}_{\epsilon})\, dx,
\end{multline}
 recalling (\ref{a0})
 \begin{equation} \label{13}
\int_{\Omega} G_{\epsilon}(u_{\epsilon}(x,\,t))\,dx + \int_{Q_t}
\frac{1}{2}{u_{\epsilon}}_{xx}^2 \,dx\,dt  \le c_{12} \int_{Q_t}
{u_{\epsilon}}^{-2}\,dx\,dt   + \int_{\Omega}
G_{\epsilon}({u_0}_{\epsilon})\,dx.
\end{equation}
If $m-n=-1$, the term $(\ln u_{\epsilon})^2$ replaces
$\frac{{u_{\epsilon}^{2 m -2n  +2}}}{(m-n  + 1)^2}$ in (\ref{12}).
Then noting that
$$ \ln^2(s) \le {c}_{13} s^{-2}, \quad  0 < s< \tilde{A}<\infty,$$
the estimate (\ref{13}) again follows.

Since by (\ref{deff}),
\begin{equation} \label{2.17m}
\epsilon s^{-4} \le \frac{1}{f_{\epsilon}(s)},\quad 0 < s,
\end{equation}
it now follows easily  that
\begin{equation} \nonumber
\int_{\Omega} G_{\epsilon}({u_{\epsilon}}(x,t))\, dx + \int_{Q_t}
\frac{1}{2}{u_{\epsilon}}_{xx}^2\,dx\,dt  \le \frac{c_{14}}{\epsilon}
\int_{Q_t} G_{\epsilon}(u_{\epsilon}) \,dx\, dt +  c_{15} t + \int_{\Omega}
G_{\epsilon}({u_0}_{\epsilon})\, dx.\end{equation}
Using (\ref{icr}), (\ref{2.17m}), we find that $\int_{\Omega} G_{\epsilon}({u_0}_{\epsilon})\, dx \le c_{16}.$ Now we may use Gronwall's
inequality to conclude that
\begin{equation} \label{49}
\int_{\Omega} G_{\epsilon}(u_{\epsilon}(x,\,t))\,dx \le D_{\epsilon}(t) <
\infty, \quad t \in [0, \, \tau_{\epsilon}).
\end{equation}
where for all $0<\epsilon\ll 1,$ $D_{\epsilon}(t)$ is a positive
increasing function of $t$ defined on $[0, \, \infty).$ As in
\cite{BF90}, (\ref{49}) can be seen to imply positivity.

\bigskip
The solution, $u_{\epsilon}(x, \, t),$  may now be extended to exist
globally, as in \cite{BF90,G99}. Select $\tilde{f}_{\epsilon}(s) \in
{\mathcal{C}}^2 (\mathbb{R})$ such that $\tilde{f}_{\epsilon}(s)
\equiv f_{\epsilon}(s)$ for $s \ge 2 \sigma,$ and
$\tilde{f}_{\epsilon}(s) \ge f_{\epsilon}(\sigma)$ for all $s \in
\mathbb{R}.$ Thus  $u_{\epsilon}(x, \, t)$ also constitutes a weak
solution of
$${u_{\epsilon}}_t +
\{\tilde{f}_{\epsilon}(u_{\epsilon})({u_{\epsilon}}_{xxx} +
h'({u_{\epsilon}}) {u_{\epsilon}}_x)\}_x =0,$$ satisfying the same
initial and boundary conditions as before. For $x_{\epsilon} \in
[-a, \, a),$ set
$$v_{\epsilon}(x, \, t)=\int_{x_{\epsilon}}^x u_{\epsilon}(\xi, \,
t) \, d\xi - \int_0^t
\tilde{f}_{\epsilon}(u_{\epsilon}(x_{\epsilon}, \, \theta))
\{{u_{\epsilon}}_{xxx} + h'({u_{\epsilon}}) {u_{\epsilon}}_x
\}(x_{\epsilon}, \, \theta) \, d\theta.$$ The regularity and
positivity of ${u_{\epsilon}}(x, \, t)$ imply that $v_{\epsilon}(x,
\, t)$ is well defined in $D=\bar{\Omega} \times (0, \,
\tau_{\epsilon})$ and satisfies
\begin{equation} \label{50}
\left\{\begin{array}l {v_{\epsilon}}_t +
\tilde{f}_{\epsilon}(u_{\epsilon}(x, \, t)) \{ {v_{\epsilon}}_{xxxx}
+ \nu{{u_{\epsilon}}}^{m-n} {v_{\epsilon}}_{xx} - A
{{u_{\epsilon}}}^{M-n} {v_{\epsilon}}_{xx} \}
=0,\\[1ex]
v_{\epsilon}(\pm\, a,t)={v_{\epsilon}}_{xx}(\pm\,
a,t)=0.\end{array}\right.
\end{equation}
Using parabolic regularity results for $v_{\epsilon},$  enhanced
regularity may be obtained for $u_{\epsilon}$ and hence for the
$u_{\epsilon}$-dependent coefficients in (\ref{50}). Returning again
to (\ref{50}),  additional regularity is obtained for
$v_{\epsilon},$ which  allows us to conclude that ${u_{\epsilon}}(x,\tau_{\epsilon})
\in {\mathcal{C}}^{4,\, \lambda}(\bar{\Omega}).$ Therefore the solution may be continued, in
contradiction to the assumed maximality of the solution.

\smallskip
Having demonstrated the global existence of positive approximants, $u_\epsilon,$
  existence of a sequence
$\{u_{{\epsilon}_k}\}$ converging uniformly to a solution of
$({\mathbb{P}})$ on ${\overline{Q}}_T$, for all $0 < T < \infty,$ as
$\epsilon_k \rightarrow 0,$ is now implied by Arz\'{e}la-Ascoli and the uniform H\"{o}lder estimates, (\ref{Holder}), as in \cite{BF90}.
 \end{proof}

{
\begin{Remark} It can be readily verified that the results of Theorem \ref{weaksolutions} remain valid when
$m-n=-2$ and $\nu=-1$, if the initial conditions satisfy the additional constraint that $\int_{\Omega} H(u_0) \, dx < \infty.$
\end{Remark}}

\begin{Remark}
 The existence of weak solutions for
 (\ref{gthermoc2}), (\ref{attractive}\,i), with $h'(u)$
replacing $\nu u^{m-n}  -A u^{M-n}$
 in Definition \ref{weaksolutions},
 can  be
concluded
  for  initial data satisfying
(\ref{ic}), (\ref{icr}),  by verifying that
(\ref{A1})
 holds for (\ref{gthermoc2}), that (\ref{A2}) holds for
(\ref{attractive}\,i), and  that (\ref{13}) holds
 for both (\ref{gthermoc2}) and (\ref{attractive}\,i), then arguing as above.
\end{Remark}

\bigskip
\section{Strong entropy-energy solutions}
\setcounter{equation}{0}

To get strong entropy-energy solutions, we  derive local entropy estimates
 \cite{BBD95}, which give us strong solutions \cite{BBD95,CPAM96}, then derive a local energy estimate \cite{GS}.
 In obtaining the local entropy estimates and strong solutions, we follow \cite{BBD95} closely.
Throughout this section the parameters will be assumed to satisfy the conditions in Theorem 1.
 Moreover, in referring to
solutions of  $({\mathbb{P}})$ and $({\mathbb{P}}_{\epsilon})$, we
shall assume that $u_0$ satisfies (\ref{ic}) and ${u_0}_{\epsilon}$
satisfies (\ref{icr}). Some further restrictions shall be introduced
in the sequel.

\smallskip
Before deriving the entropy estimates, we present a lemma, which constitutes a refinement of Theorem 3.1 in \cite{BBD95},
and which is
useful for concluding regularity results from entropy estimates.

\begin{Lemma} \label{regularity}
Let $u(x,t)$ be a weak solution of $({\mathbb{P}})$ obtained as the
limit of a subsequence of  solutions $u_{\epsilon}(x, \, t)$ of
$({\mathbb{P}}_{\epsilon}).$   Suppose that  for some
$\alpha \in (\frac{1}{2} -n, \, 2-n),$ there exist constants $c_1,$
$c_2,$ and $\delta>0,$ which do not depend on $\epsilon$, such that
\begin{equation} \label{a2.34}
\int_{Q_T} u_{\epsilon}^{\alpha+n-2 \gamma
+1}(u_\epsilon^\gamma)_{xx}^2\,dx\,dt  \le c_1,
\end{equation}
and
\begin{equation} \label{a2.35}
\int_{Q_T} u_{\epsilon}^{\alpha + n -3} {u_{\epsilon}}_x^4 \,dx\,dt
 \le c_2,
\end{equation}
for all $\gamma$ satisfying
\begin{equation}\label{a.47}
 \frac{1 + n + \alpha}{3} \le \gamma \le \frac{1 + n + \alpha}{3} + \delta,
 \end{equation}
then $u^{1/\beta}(\cdot, \, t) \in {\mathcal{C}}^1([-a, \, a])$ for
all $\beta \in (0, \, \frac{3}{n+\alpha+1})$ for almost every $t>0.$
\end{Lemma}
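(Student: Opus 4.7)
My plan is to derive the $C^1$ regularity from the two weighted entropy estimates in three main steps: pass the estimates to the limit solution $u$, extract $W^{2,1}$-type regularity of an appropriate power $u^{\tau}$ at almost every time, and then reach the full range of $\beta$ by a composition argument. The critical exponent throughout is $\gamma_* := (1+n+\alpha)/3$.

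First, I would pass (\ref{a2.34}) and (\ref{a2.35}) from the approximating sequence $\{u_{\epsilon_k}\}$ to $u$. The uniform H\"older convergence (\ref{Holder}) gives $u_{\epsilon_k}^s \to u^s$ uniformly on $\bar\Omega \times [0,T]$ for every $s > 0$; combined with weak lower semicontinuity of the $L^2$-norm applied to $u_{\epsilon_k}^{(\alpha+n-2\gamma+1)/2} (u_{\epsilon_k}^\gamma)_{xx}$ and $u_{\epsilon_k}^{(\alpha+n-3)/4} (u_{\epsilon_k})_x$, Fatou's lemma transfers both bounds to $u$. By Fubini there is a full-measure set $E \subset (0, T)$ on which both spatial integrands are summable in $x$ for every admissible $\gamma$. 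The symmetric choice $\gamma = \gamma_*$ in (\ref{a2.34}) makes the weight exponent $\alpha+n-2\gamma+1$ equal to $\gamma_*$, giving $u^{\gamma_*/2}(u^{\gamma_*})_{xx} \in L^2(Q_T)$. Expanding $(u^{\gamma_*})_{xx} = \gamma_* u^{\gamma_*-1} u_{xx} + \gamma_*(\gamma_*-1) u^{\gamma_*-2} u_x^2$ and using (\ref{a2.35}), which rewrites as $(u^{(\alpha+n-3)/4} u_x)^2 \in L^2(Q_T)$, the triangle inequality yields the auxiliary bound $u^{(\alpha+n-1)/2} u_{xx} \in L^2(Q_T)$.

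Second, I would extract pointwise regularity for suitable powers. For $\tau \ge (\alpha+n+1)/2$, expansion of $(u^\tau)_{xx}$ combined with Cauchy-Schwarz in $x$ applied to the two auxiliary quantities (both in $L^2(\Omega) \subset L^1(\Omega)$ for $t \in E$) gives $(u^\tau)_{xx}(\cdot, t) \in L^1(\Omega)$, so $u^\tau(\cdot, t) \in W^{2,1}(\Omega) \hookrightarrow C^1([-a,a])$ in one dimension. To push this down to $\tau$ arbitrarily close to $\gamma_*$, I would exploit the $\delta$-range in (\ref{a2.34}): take $\gamma = \gamma_* + \eta$ with $\eta \in (0, \delta]$, producing a strictly smaller weight $u^{(\gamma_*/2 - \eta)}$. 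A H\"older/Gagliardo-Nirenberg-type interpolation against the $L^4$-control $(u^{(\alpha+n+1)/4})_x \in L^4(\Omega)$ from (\ref{a2.35}) then balances exponents to produce an unweighted bound $(u^{\gamma_*+\eta})_{xx}(\cdot, t) \in L^p(\Omega)$ for some $p \ge 1$ and $t \in E$, hence $u^{\gamma_* + \eta}(\cdot, t) \in C^1([-a,a])$.

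Third, for $\beta \in (0, 3/(n+\alpha+1))$, set $\sigma := 1/\beta > \gamma_*$ and pick $\tau_0 \in (\gamma_*, \min\{\sigma, \gamma_* + \delta\}]$, so that $u^{\tau_0}(\cdot, t) \in C^1([-a,a])$ for $t \in E$ by Step 2. Then write $u^\sigma = (u^{\tau_0})^{\sigma/\tau_0}$ with $\sigma/\tau_0 \ge 1$. Since the map $y \mapsto y^{\sigma/\tau_0}$ is $C^1$ on $[0, \infty)$ (its derivative is continuous on the closed half-line and vanishes at $y = 0$ when the exponent exceeds $1$), $u^\sigma(\cdot, t) \in C^1([-a,a])$ as the composition of $C^1$ functions, which is the desired conclusion.

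The main obstacle is the second step: producing $(u^\gamma)_{xx}(\cdot, t) \in L^p(\Omega)$ for some $p \ge 1$ at exponents close to the critical $\gamma_*$. The degenerate weight in (\ref{a2.34}) cannot be naively removed since no positive lower bound on $u$ is available; only an \emph{upper} bound on the vanishing rate of $u$ (of the form $u(x,t) \le C_t |x-x_0|^{3/(\alpha+n+1)}$) is supplied by (\ref{a2.35}) via the $C^{0,3/4}$-embedding of $W^{1,4}$ in one dimension. The resolution is a delicate interpolation that uses both the $L^4$-gradient bound and the $\delta$-freedom in the range of admissible $\gamma$'s, and it is precisely this balance that determines the sharp threshold $\beta < 3/(n+\alpha+1)$ appearing in the statement.
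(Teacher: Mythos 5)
Your Steps 1 and 3 are sound (the limit passage is routine, and the composition $u^{\sigma}=(u^{\tau_0})^{\sigma/\tau_0}$ with $\sigma/\tau_0\ge 1$ is fine), and the first half of Step 2 does give $u^{\tau}(\cdot,t)\in W^{2,1}(\Omega)\subset C^1$ for $\tau\ge\frac{\alpha+n+1}{2}$, i.e. for $\beta\le\frac{2}{n+\alpha+1}$. But that is strictly short of the claimed range $\beta<\frac{3}{n+\alpha+1}$ (it does not even give $u\in C^1$ when $\alpha\in(1-n,2-n)$), so everything hinges on the second half of Step 2 — and that step, as stated, is a genuine gap, not a technicality. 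To pass from the weighted bound $\int u^{\gamma_*-2\eta}(u^{\gamma_*+\eta})_{xx}^2\,dx\le C(t)$ to an \emph{unweighted} bound $(u^{\gamma_*+\eta})_{xx}(\cdot,t)\in L^p(\Omega)$, $p\ge1$, any H\"older/interpolation argument requires integrability of a \emph{negative} power of $u$ near its zero set, i.e. a lower bound on the rate at which $u$ vanishes. No such bound is available (solutions may have compact support or touch down with arbitrarily flat or oscillatory profiles), and, as you yourself note, \eqref{a2.35} only supplies an \emph{upper} bound $u\le C_t|x-y|^{3/(\alpha+n+1)}$, which makes $u^{-s}$ larger, not smaller. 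Worse, the implication you need is false as an implication from \eqref{a2.34}--\eqref{a2.35} alone: a profile behaving like $|x|^{\kappa}(2+\sin(1/|x|))$ near a zero, with $\kappa\gamma_*$ slightly above $7/3$, satisfies both integral bounds (the integrands scale like $|x|^{3\kappa\gamma_*-8}$) while $(u^{\gamma_*+\eta})_{xx}\sim|x|^{\kappa(\gamma_*+\eta)-4}\notin L^1$ for all $\eta\in(0,\delta]$ once $\delta$ is small — yet $u^{\gamma_*+\eta}$ is still $C^1$ there. So the $W^{2,p}$ route cannot certify the conclusion in the critical range; the "delicate interpolation" you defer is precisely the point where the proof is missing.

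What the paper does instead (following Lemma 3.1 of Beretta--Bertsch--Dal Passo) avoids unweighted second-derivative bounds entirely: for a.e.\ $t$ and $\gamma$ slightly above $\gamma_*$ one sets $q=4-\frac{1+n+\alpha}{\gamma}>1$ and shows, via a pointwise argument of the form $|z(x)-z(y)|\le\int|z_x|$ with $z$ a power of $(u^\gamma)_x$ and Cauchy--Schwarz against the two \emph{weighted} quantities, that $|(u^{\gamma})_x|^{(4-q)/q}(x,t)\le C(t)|x-y|^{(q-1)/q}$ at any zero $y$ of $u(\cdot,t)$; integrating gives $u(x,t)\le C(t)|x-y|^{3/(\alpha+n+1)}$, and then the identity $(u^{1/\beta})_x=\frac{1}{\beta\gamma}u^{1/\beta-\gamma}(u^{\gamma})_x$ (with $\gamma$ chosen so that $\beta\gamma<1$) shows $(u^{1/\beta})_x$ vanishes H\"older-continuously at the zero set, which together with smoothness on $\{u>0\}$ yields $C^1$ regularity. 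If you want to salvage your outline, you should replace the interpolation step by this pointwise estimate near the zero set; the weighted structure of \eqref{a2.34} is exactly tailored for it.
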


\begin{proof}
For any $0< \beta< \frac{3}{n+ \alpha +1},$ we may choose $\gamma$
satisfying (\ref{a.47}) such that  $0< \beta \gamma <1.$ Setting
$q=4 - \frac{(1+n+\alpha)}{\gamma}$ and arguing  as in the proof of
\cite[Lemma 3.1]{BBD95},  it follows from
(\ref{a2.34}),(\ref{a2.35})  that  for almost every $t>0$ there
exists a $C_1(t)<\infty$ such that
\begin{multline} \label{a3.8}
\hbox{\, if  \,} u(y,\, t)=0 \hbox{\, for some \,} y \in [-a,\, a],
\hbox{\, then \,} \\
 |(u^{\gamma})_x|^{(4-q)/q}(x,\, t)
\le C_1(t)|x-y|^{(q-1)/q}\quad \hbox{\, for \,} x \in [-a, a].
\end{multline}
From (\ref{a3.8}), we find by integrating that for almost every
$t>0,$ there exists a $C_2(t)<\infty$ such that
\begin{multline} \label{a3.5}
 \hbox{\,if \,} u(y,\,t)=0
\hbox{\, for some \,} y \in [-a,\, a], \hbox{\, then \,}\\
 u(x,t) \le C_2(t)
|x-y|^{\frac{3}{\alpha + n +1}} \quad \hbox{\, for \,} x \in [-a,
a].
\end{multline}

Since $0< \beta \gamma <1,$ we may combine (\ref{a3.5}) and
(\ref{a3.8})  to obtain that for almost every $t>0,$ there exists a
$C_3(t) < \infty$ such that
\begin{multline} \label{a3.9}
\hbox{\, if \,} u(y,\, t)=0 \hbox{\, for some \,} y \in [-a,\, a],
\hbox{\, then for \,} x \in [-a, a],\\  |(u^{1/\beta})_x(x, \, t) |
\le C_3(t)|x-y|^{\frac{3}{4-q}}|x-y|^{\frac{q-1}{4-q}} \le C_3(t)
|x-y|^{\mu},
\end{multline}
where $\mu=\frac{1}{\beta}\frac{3}{\alpha+n+1}-1>0$ and
$C_3(t)<\infty.$
\end{proof}

From Lemma \ref{regularity}, two simple but useful corollaries
follow.

\begin{Corollary} \cite{BBD95} \label{2strong}
Let $u(x,t)$ be a weak solution of $({\mathbb{P}})$ obtained as the
limit of a subsequence of  solutions $u_{\epsilon}(x, \, t)$ of
$({\mathbb{P}}_{\epsilon}).$   Suppose that  for some
$\alpha \in (\frac{1}{2} -n, \, 2-n),$ there exist constants $c_1,$
$c_2,$ and $\delta>0,$ which do not depend on $\epsilon$, such that
for all $\gamma$ satisfying (\ref{a.47}), the estimates
(\ref{a2.34}) and (\ref{a2.35}) hold. Then $u(\cdot, \, t) \in
{\mathcal{C}}^1([-a,a])$ for almost every $t>0.$
\end{Corollary}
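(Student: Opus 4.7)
The plan is to deduce this corollary directly from Lemma \ref{regularity} by choosing the exponent $\beta = 1$. The hypotheses listed here are precisely those required to invoke Lemma \ref{regularity}, so no additional estimates on the approximants $u_{\epsilon}$ need to be established; the work is entirely in checking the admissible range of $\beta$.

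First I would recall the conclusion of Lemma \ref{regularity}: under the stated entropy-type bounds (\ref{a2.34})--(\ref{a2.35}) uniform in $\epsilon$, we have
\[
u^{1/\beta}(\cdot,\,t)\in\mathcal{C}^1([-a,\,a])\quad\text{for a.e. }t>0,\text{ for every }\beta\in\Bigl(0,\,\tfrac{3}{n+\alpha+1}\Bigr).
\]
To obtain $\mathcal{C}^1$ regularity of $u$ itself, it suffices to verify that $\beta=1$ lies in the admissible interval, i.e.\ that
\[
1<\frac{3}{n+\alpha+1},\qquad\text{equivalently}\qquad \alpha<2-n.
\]
This inequality is built into the standing hypothesis $\alpha\in(\tfrac{1}{2}-n,\,2-n)$, so $\beta=1$ is a legitimate choice.

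Applying Lemma \ref{regularity} with this $\beta$ then gives $u(\cdot,\,t)\in\mathcal{C}^1([-a,\,a])$ for a.e. $t>0$, which is the desired conclusion. The only potential subtlety\,---\,and the step I would check most carefully\,---\,is the strict inequality $\alpha<2-n$: it is exactly what guarantees that the exponent $\mu$ appearing in the pointwise bound (\ref{a3.9}) (with $\beta=1$) is strictly positive, so that $(u)_x(x,\,t)\to 0$ as $x$ approaches a zero $y$ of $u(\cdot,\,t)$, securing continuity of $u_x$ across contact points. Away from the contact set, $\mathcal{C}^1$ regularity follows from the regularity of $u$ on $P$ provided by Definition \ref{weaksolution}(a), so the two together yield the claim on all of $[-a,\,a]$.
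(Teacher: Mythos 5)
Your argument is correct and is essentially identical to the paper's own proof: both reduce the corollary to Lemma \ref{regularity} with the choice $\beta=1$, observing that $\alpha\in(\tfrac12-n,\,2-n)$ forces $\tfrac{3}{n+\alpha+1}>1$, so $\beta=1$ is admissible. Your additional remark about the strict inequality yielding $\mu>0$ in (\ref{a3.9}) is a sound sanity check but not needed beyond what the lemma already provides.
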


\begin{proof}
If $\alpha \in (\frac{1}{2}-n, \, 2-n)$, then
$\frac{3}{\alpha+n+1} \in (1,\,2).$ Hence $1 \in (0, \,
\frac{3}{n+\alpha+1}).$
\end{proof}

\begin{Remark}\label{2strong'}
If $u(\cdot, \, t) \in {\mathcal{C}}^1([-a,a])$ for almost
every $t>0,$  then $u(x,\,t)$ is said to be a  {\it{strong solution}}
 in the sense of Bernis \cite{B1}.
\end{Remark}

\begin{Corollary}\label{3.4}
Let $u(x,t)$ be a weak solution of $({\mathbb{P}})$ obtained as the
limit of a subsequence of  solutions $u_{\epsilon}(x, \, t)$ of
$({\mathbb{P}}_{\epsilon}).$  Let  $\Psi$ denote a nonempty
subset of $(\frac{1}{2} -n, \, 2-n).$ If for all $\alpha \in \Psi,$
 there exist constants $c_1,$ $c_2,$ and $\delta>0,$  which do not depend on $\epsilon$,
 such that for all $\gamma$ satisfying (\ref{a.47}), the estimates
(\ref{a2.34}) and (\ref{a2.35}) hold, then $u^{1/\beta}(\cdot, \, t)
\in {\mathcal{C}}^1([-a, \, a])$ for all $\beta \in (0, \,
\frac{3}{n+ \inf \Psi +1}\,)$ for almost every $t>0.$
\end{Corollary}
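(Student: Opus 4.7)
The plan is to deduce the corollary directly from Lemma \ref{regularity} by letting $\alpha$ range over $\Psi$ and taking a union. The key observation is that the map $\alpha \mapsto 3/(n+\alpha+1)$ is strictly decreasing on $(\tfrac12-n,\,2-n)$, so $\sup_{\alpha \in \Psi}\frac{3}{n+\alpha+1}=\frac{3}{n+\inf\Psi+1}$, and membership in $\Psi$ is enough to reach any $\beta$ strictly below this supremum.

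First I would apply Lemma \ref{regularity} for a fixed $\alpha \in \Psi$: the hypotheses on $c_1,c_2,\delta$ and on $\gamma$ satisfying (\ref{a.47}) are exactly those assumed, so for every such $\alpha$ there is a null set $N_\alpha \subset (0,\infty)$ such that for all $t \notin N_\alpha$ and all $\beta \in \bigl(0,\,\tfrac{3}{n+\alpha+1}\bigr)$ we have $u^{1/\beta}(\cdot,t)\in \mathcal{C}^1([-a,a])$.

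Next I handle the possibly uncountable family $\{N_\alpha\}_{\alpha\in\Psi}$. Choose a countable sequence $\{\alpha_k\}_{k\ge 1}\subset \Psi$ with $\alpha_k \downarrow \inf\Psi$ (if $\inf\Psi\in\Psi$ we may simply take it; otherwise such a sequence exists by definition of infimum). Set
\begin{equation*}
N:=\bigcup_{k\ge 1} N_{\alpha_k},
\end{equation*}
which is a null subset of $(0,\infty)$. Now fix any $\beta\in\bigl(0,\,\tfrac{3}{n+\inf\Psi+1}\bigr)$. Since $\tfrac{3}{n+\alpha_k+1}\to \tfrac{3}{n+\inf\Psi+1}$ monotonically from below, there exists $k_0$ with $\beta<\tfrac{3}{n+\alpha_{k_0}+1}$. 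For $t\notin N$ we have in particular $t\notin N_{\alpha_{k_0}}$, so Lemma \ref{regularity} applied at $\alpha=\alpha_{k_0}$ yields $u^{1/\beta}(\cdot,t)\in\mathcal{C}^1([-a,a])$. This establishes the claim for every $\beta$ in the stated range, off a single common null set.

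The only mild subtlety — and the one point worth checking carefully — is the passage from the uncountable family $\Psi$ to a countable exhausting subsequence; this is where one must be careful that "for almost every $t>0$" is not weakened when we quantify over $\beta$. The countable sequence $\{\alpha_k\}$ handles this cleanly because the nested intervals $\bigl(0,\,\tfrac{3}{n+\alpha_k+1}\bigr)$ exhaust $\bigl(0,\,\tfrac{3}{n+\inf\Psi+1}\bigr)$ as $k\to\infty$, so a single null set $N$ suffices. No further estimates are required; the argument is purely a quantifier-swap based on Lemma \ref{regularity}. \fine
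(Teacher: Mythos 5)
Your argument is correct and follows the same route as the paper, which simply states that the corollary is an immediate consequence of Lemma \ref{regularity}; your extra care in choosing a countable sequence $\alpha_k \downarrow \inf\Psi$ and a single null set $N=\bigcup_k N_{\alpha_k}$, together with the monotonicity of $\alpha \mapsto \frac{3}{n+\alpha+1}$, just makes explicit the quantifier bookkeeping that the paper leaves to the reader.
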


\begin{proof} The result is an immediate consequence of Lemma
\ref{regularity}. \end{proof}

\smallskip\par\noindent We now derive our  primary entropy estimates. \smallskip

Let
\begin{equation} \label{Azeta}
\zeta \in {\mathcal{C}}^4([-a,\,a]) \hbox{\, with support in \,}(-a,
\, a) \hbox{\, and \,} \zeta \ge 0,
\end{equation}
or $\zeta \equiv 1,$ and let \cite{BBD95,CPAM96}
$$G_{\epsilon}(s)=\frac{\epsilon s^{\alpha + n -3}}{(\alpha+n
-4)(\alpha + n-3)} + \frac{s^{\alpha+1}}{\alpha(\alpha+1)},$$ where
$\alpha \in (1/2-n, \, 2-n) \setminus \{0, \, -1\}.$ Using $\zeta^4
G_{\epsilon}'(u_{\epsilon})$ to test $({\mathbb{P}}_{\epsilon})$ on
$Q_T=\Omega \times (0, \, T),$ $0<T<\infty,$ and treating the terms
which also appear in the classical thin film equation as they were
treated in \cite{BBD95}, we obtain that for any $\gamma$
satisfying
\begin{equation} \label{a2.6}
\frac{t+1 - \sqrt{(t-2)(1-2t)}}{3} < \gamma < \frac{t+1 +
\sqrt{(t-2)(1-2t)}}{3},
\end{equation}
where $t=\alpha +n,$ there exist  positive constants, $c_{3},$
$c_{4},$  which do not depend on $\epsilon,$ such that
\begin{multline} \label{entropy}
\int_{\Omega} \zeta^4 G_{\epsilon}(u_{\epsilon}(x, \, T)) \, dx
+\\
c_{3} \Biggl[ \int_{Q_T} \zeta^4 {u_{\epsilon}}^{\alpha + n -2
\gamma +1} (u_{\epsilon}^{\gamma})_{xx}^2\,dx\,dt + \int_{Q_T} \zeta^4
{u_{\epsilon}}^{\alpha +n-3} {u_{\epsilon}}_x^4 \, dx\,dt \Biggr] \le\\
\int_{\Omega} \zeta^4 G_{\epsilon}({u_0}_{\epsilon}) \, dx +
c_{4} \int_{Q_T}( |\zeta_x|^4+ |\zeta \zeta_{xx}|^2) u^{n+\alpha
+1}_\epsilon\,dx\,dt +I,
\end{multline}
where
$$I:=-\int_{Q_T} \zeta^4
g_{\epsilon}(u_{\epsilon})\{f_{\epsilon}(u_{\epsilon})(\nu
{u_{\epsilon}}^{m-n} {u_{\epsilon}}_x - A {u_{\epsilon}}^{M-n}
{u_{\epsilon}}_x)\}_x \,dx\,dt,\quad
g_{\epsilon}(u_{\epsilon}):=G'_{\epsilon}(u_{\epsilon}).
$$ Integrating $I$ by parts,
$$I= \int_{Q_T} \zeta^4 g_{\epsilon}'(u_{\epsilon})
f_{\epsilon}(u_{\epsilon})(\nu {u_{\epsilon}}^{m-n}
{u_{\epsilon}}_x^2 -A {u_{\epsilon}}^{M-n}
{u_{\epsilon}}_x^2)\,dx\,dt+\hspace{3cm}$$
$$\hspace{2cm}\int_{Q_T} 4 \zeta^3 \zeta_x g_{\epsilon}(u_{\epsilon})
f_{\epsilon}(u_{\epsilon}) (\nu {u_{\epsilon}}^{m-n}
{u_{\epsilon}}_x - A {u_{\epsilon}}^{M-n} {u_{\epsilon}}_x)\,dx\,dt := I_a
+ I_b.$$

 The term $I_a$ may be written as
 \begin{equation} \label{Iam}
I_a=\int_{Q_T} \zeta^4 (\nu {u_{\epsilon}}^{\alpha + m-1} - A
{u_{\epsilon}}^{\alpha + M-1}){u_{\epsilon}}_x^2\,dx\,dt.\end{equation} For
$\nu=-1,$  both terms in (\ref{Iam}) are non-positive. For
$\nu=+1$,
  we
estimate
\begin{multline} \label{Iap}
 I_a \le \delta \int_{Q^T} \zeta^4 {u_{\epsilon}}^{\alpha+n-3}
{u_{\epsilon}}_x^4 \,dx\,dt + c_{5}(\delta)\int_{Q_T} \zeta^4
{u_{\epsilon}}^{\alpha+ 2m -n +1} \,dx\,dt -\\A \int_{Q_T} \zeta^4
{u_{\epsilon}}^{\alpha + M -1}{u_{\epsilon}}_x^2\, dx \,dt,
\end{multline}
 where $\delta>0$ is
arbitrary.

With regard to $I_b$, integration by parts gives  that
$$I_b = - \int_{Q_T} 4 (\zeta^3 \zeta_x)_x
\Bigl[\int^{u_{\epsilon}}_{0} g_{\epsilon}(s) f_{\epsilon}(s) [\nu
s^{m-n} - A s^{M-n}] \, ds \Bigr]\,dx\,dt.$$ As noted in \cite{BBD95},
$|g_{\epsilon}(u_{\epsilon}) f_{\epsilon}(u_{\epsilon})|
\le c_{6} u_{\epsilon}^{n+\alpha}.$  Thus, recalling (\ref{a0}) and
that $M>m,$
\begin{equation} \label{Ibp}
I_b \le c_{7} \int_{Q_T} |(\zeta^3 \zeta_x )_x|
{u}_{\epsilon}^{\alpha + m + 1} \, dx \, dt.
\end{equation}

If $\nu=-1$, we may combine the estimates on $I_a$ and $I_b$ to
obtain
\begin{multline} \label{entropy-}
\int_{\Omega} \zeta^4 G_{\epsilon}(u_{\epsilon}(x, \, T)) \, dx +A\int_{Q_T}\zeta^4u_\epsilon^{\alpha+M-1}{u_\epsilon}_x^2\, dx\,dt+\\
c_{3} \Biggl[ \int_{Q_T} \zeta^4 {u_{\epsilon}}^{\alpha + n -2
\gamma+1} (u_{\epsilon}^{\gamma})_{xx}^2 \,dx\,dt+ \int_{Q_T} \zeta^4
{u_{\epsilon}}^{\alpha +n-3} {u_{\epsilon}}_x^4 \,dx\,dt
\Biggr]+\\\int_{Q_T}\zeta^4u_\epsilon^{\alpha+m-1}
{u_\epsilon}_x^2 \,dx\,dt \le \int_{\Omega} \zeta^4 G_{\epsilon}({u_0}_{\epsilon})
\, dx +II,
 \end{multline}
where
$$II=c_{4} \int_{Q_T}( |\zeta_x|^4+ |\zeta \zeta_{xx}|^2) u_{\epsilon}^{n+\alpha
+1} \,dx\,dt + c_{7} \int_{Q_T} |(\zeta^3 \zeta_x )_x| {u}_{\epsilon}^{\alpha
+ m + 1}\,dx\,dt.$$

Similarly, if $\nu=+1$, the  estimates yield
\begin{multline} \label{entropy+}
\int_{\Omega} \zeta^4 G_{\epsilon}(u_{\epsilon}(x, \, T)) \, dx +A
\int_{Q_T} \zeta^4 {u_{\epsilon}}^{\alpha + M
-1}{u_{\epsilon}}_x^2 \,dx\,dt+\\
c_{8} \Biggl[ \int_{Q_T} \zeta^4 {u_{\epsilon}}^{\alpha + n -2
\gamma +1} (u_{\epsilon}^{\gamma})_{xx}^2 \,dx\,dt + \int_{Q_T} \zeta^4
{u_{\epsilon}}^{\alpha +n-3} {u_{\epsilon}}_x^4 \,dx\,dt\Biggr]  \le
\\ + c_{5} \int_{Q_T} \zeta^4 {u_{\epsilon}}^{\alpha+ 2 m -n +1} \,dx\,dt+
\int_{\Omega} \zeta^4 G_{\epsilon}({u_0}_{\epsilon}) \,dx+
II.
\end{multline}

To obtain bounds from  (\ref{entropy-}), (\ref{entropy+}), we
impose certain conditions  on $\alpha$ and on the initial data.

\begin{Remark} \label{alphabar}
Suppose that $u_0$   satisfies (\ref{ic}) and $0<n<3$. Defining
\begin{equation} \label{s17m}
\alpha^{\ast} =\left\{
\begin{array}{ll} \frac{1}{2}-n, &\quad 0<n \le \frac{3}{2},\\[1ex]
-1, & \quad \frac{3}{2} < n <3, \end{array} \right. \end{equation}
we see that $\alpha^{\ast} + 1 \ge 0,$  $\alpha^{\ast} \in
[\frac{1}{2} -n, \, 2-n),$ and
$\int_{\Omega} \zeta^4 u_0^{\alpha+1} \, dx < +\infty$
for all $\alpha \in
({\alpha}^{\ast},\, 2-n)$ and $\zeta \in {\mathcal{C}}^4([-a,\,a]).$
\end{Remark}

In consideration of the above remark, we  define

\begin{Def} \label{alphazeta}
Suppose that $u_0$   satisfies (\ref{ic}) and $\zeta \in
{\mathcal{C}}^4([-a,\,a]).$ Then we define $\alpha_0(\zeta) \equiv
\inf {\alpha}$ such that $\alpha > \frac{1}{2}-n$ and
\begin{equation}\nonumber
\begin{array}{ll}
\int_{\Omega} \zeta^4 u_0^{\alpha+1} \, dx < +\infty & \hbox{\,
if \,} \alpha \ne -1,\\ [1ex] \int_{\Omega} \zeta^4 | \ln u_0| \,
dx < +\infty & \hbox{\, if \,} \alpha = -1.\end{array}
\end{equation}
\end{Def}
Remark \ref{alphabar} and the definition of
$\alpha_0(\zeta)$  imply that if $0<n<3,$ then
\begin{equation} \label{s17}
\frac{1}{2} -n \le \alpha_0(\zeta) \le \alpha^{\ast}<2-n.
\end{equation}

%

\bigskip
{With regard to the stable case,
the theorem below follows essentially as in
 \cite{BBD95,DGS01}.}

\begin{Theorem} \label{Case3a} {\bf{(The stable case.)}} Suppose that $\nu=-1,$  $0 \le
A,$ $0<n,$  {$-2 \le m-n,$ with $m < M$ if $A>0,$ and with $\int_{\Omega} u_0 \, dx < \infty$ if $-2=m-n$.}

\smallskip\par\noindent
$i)$ Let $\beta \in (0, \, \beta_0)$ where $\beta_0=\max \Big\{ \frac{3}{n+\alpha_0 +1}, \frac{1}{m+\alpha_0 +1} \Big\}$ and  $\alpha_0=\alpha_0(\zeta = 1).$  Then $u^{1/\beta}(\cdot, \, t) \in
{\mathcal{C}}^1([-a, \, a])$ for almost every $t>0.$

\smallskip\par\noindent
$ii)$ Let $\zeta$ satisfy (\ref{Azeta}).  Then, for any  $\alpha \in
(\max\{\alpha_0(\zeta), -m-1 \}, \, 2-n)/\{0,-1\}$  and for any $\gamma$
satisfying (\ref{a2.6}),
\begin{multline} \label{localentropyI}
\frac1{\alpha(\alpha+1)}\int_{\Omega} \zeta^4 u^{1+\alpha}(x, \, T)
\, dx +\\ c_{1} \Biggl[ \int_{P} \zeta^4 {u}^{\alpha + n -2 \gamma
+1} (u^{\gamma})_{xx}^2\,dx\,dt+ \int_{Q_T} \zeta^4 u^{\alpha+n-3} u_x^4 \, dx\,dt \Biggr]+ \\
\bigg[\int_{Q_T}\zeta^4u^{\alpha+m-1}u_x^2 \,dx\,dt +A \int_{Q_T}\zeta_x^4 u^{\alpha+M-1} u_x^2 \,dx\,dt \Biggr] \le\\
 c_{2} \int_{Q^T}( |\zeta_x|^4+ |\zeta \zeta_{xx}|^2) u^{n+\alpha
+1}\,dx\,dt+ c_{3} \int_{Q_T} |(\zeta^3 \zeta_x )_x| {u}^{\alpha + m + 1}\,dx\,dt+\\
\frac1{\alpha(\alpha+1)}\int_{\Omega} \zeta^4 {u_0}^{\alpha+1} \,
dx.
\end{multline}
\end{Theorem}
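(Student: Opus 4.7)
The plan is to derive the local entropy estimate (\ref{localentropyI}) by passing to the limit $\epsilon \to 0$ in the regularized inequality (\ref{entropy-}), and then to deduce the regularity assertion of part (i) from Corollary \ref{3.4} together with an additional Bernis-type argument exploiting the extra LHS term that survives in the stable case. The inequality (\ref{entropy-}) was obtained earlier in this section by testing $(\mathbb{P}_\epsilon)$ against $\zeta^4 G'_\epsilon(u_\epsilon)$: for $\nu = -1$ both pieces of $I_a$ in (\ref{Iam}) are nonpositive, and moving them to the LHS produces exactly the pre-limit form of (\ref{localentropyI}).

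For part (ii), I pass to the limit along the subsequence $\epsilon_k \to 0$ from Theorem \ref{weaksolutions}, which converges uniformly on $\overline{Q}_T$ by (\ref{Holder}); weak $L^2$ compactness is available on the LHS after rewriting the singular integrands as $L^2$-norms of derivatives of bounded powers of $u_{\epsilon_k}$, so Fatou's lemma yields the liminf inequality termwise (the restriction to $P$ in the first bracketed term accommodates the possibility that the weight $u^{\alpha+n-2\gamma+1}$ is singular on $\{u=0\}$). For the initial datum, the hypothesis $\alpha > \alpha_0(\zeta)$ makes $\zeta^4 u_0^{\alpha+1}$ integrable, while the $\epsilon$-piece of $G_\epsilon({u_0}_\epsilon)$ vanishes in the limit because (\ref{icr}) gives ${u_0}_\epsilon \ge \epsilon^\theta$ with $\theta \in (0, 2/5]$, and $\alpha + n - 3 \in (-\tfrac{5}{2},-1)$ forces $1+\theta(\alpha+n-3) > 0$. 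On the RHS, the restriction $\alpha > -m-1$ makes $\alpha+m+1$ positive, so $u^{\alpha+m+1}$ is uniformly bounded by (\ref{a0}) and dominated convergence applies to both integrals. The borderline case $m-n = -2$ requires only the additional integrability hypothesis on $u_0$ to keep (\ref{2.5'})--(\ref{energyestimate}) uniform in $\epsilon$, after which the argument is unchanged.

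For part (i), take $\zeta \equiv 1$ so that the RHS of (\ref{localentropyI}) reduces to the initial-data term, which is finite for every $\alpha \in (\alpha_0, 2-n)\setminus\{0,-1\}$; this supplies the uniform bounds (\ref{a2.34})--(\ref{a2.35}) demanded by Corollary \ref{3.4} with $\Psi$ equal to this interval, yielding $u^{1/\beta}(\cdot,t) \in C^1([-a,a])$ for a.e.\ $t$ whenever $\beta \in (0, 3/(n+\alpha_0+1))$. To reach the second branch $\beta < 1/(m+\alpha_0+1)$ in the definition of $\beta_0$, I invoke the additional LHS term of (\ref{localentropyI}): the identity
\[
\int_{Q_T} u^{\alpha+m-1} u_x^2\, dx\, dt = \Bigl(\tfrac{2}{\alpha+m+1}\Bigr)^2 \int_{Q_T} \bigl((u^{(\alpha+m+1)/2})_x\bigr)^2 dx\, dt
\]
implies $u^{(\alpha+m+1)/2}(\cdot,t) \in H^1(\Omega) \hookrightarrow C^{1/2}(\Omega)$ for a.e.\ $t$, so at any zero $y$ of $u(\cdot,t)$ one has $u(x,t) \le C(t)|x-y|^{1/(\alpha+m+1)}$, whence $u^{1/\beta}(\cdot,t)$ is $C^1$ with vanishing derivative at $y$ whenever $\beta(\alpha+m+1) < 1$; letting $\alpha \downarrow \alpha_0$ yields the full range. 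I expect the main technical obstacle to be the weak-$L^2$ compactness step for the two integrands in the first bracket, since these involve negative-exponent powers of $u_\epsilon$ that may blow up on $\{u_\epsilon \to 0\}$; the standard remedy, following \cite{BBD95}, is to reexpress the integrands as $L^2$-norms of derivatives of regular powers of $u_\epsilon$ and invoke lower semicontinuity under weak convergence on $P$.
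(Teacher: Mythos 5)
Your proposal follows essentially the same route as the paper: part (ii) is obtained by letting $\epsilon \to 0$ in (\ref{entropy-}) (with the positivity of $\alpha+m+1$ and $\alpha+n+1$ controlling the right-hand side, and the choice $\theta \le 2/5$ in (\ref{icr}) killing the $\epsilon$-part of $G_\epsilon({u_0}_\epsilon)$), and part (i) by taking $\zeta \equiv 1$, invoking Lemma \ref{regularity}/Corollary \ref{3.4} for the branch $3/(n+\alpha_0+1)$, and extracting the branch $1/(m+\alpha_0+1)$ from the H\"older regularity of $u^{(\alpha+m+1)/2}(\cdot,t)$ supplied by the extra left-hand term $\int_{Q_T} u^{\alpha+m-1}u_x^2$. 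This matches the paper's proof, with your write-up merely supplying more detail on the limit passage and the borderline case $m-n=-2$.
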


\begin{proof} Part $i)$ follows by setting $\zeta = 1$ in
(\ref{entropy-}),   implementing Lemma \ref{regularity}, and noting the H\"{o}lder regularity implied by the
boundedness of $\int_{Q_T} (u^{\frac{\alpha + m+1}{2}})_x^2 \, dx dt.$ Part
$ii)$  follows easily from (\ref{entropy-}) by letting $\epsilon
\rightarrow 0$ and noting that $\alpha + m +1$ and $\alpha + n+1$
are positive in the indicated parameter range.
\end{proof}

\begin{Remark} If $m<0$ and $m-n<-\frac{3}{2},$ {then}
the Infinite Speed of Propagation Property holds for the solutions
discussed in Theorem \ref{Case3a} due to the boundedness of
$\int_{Q_T} (u^{\frac{\alpha + m+1}{2}})_x^2 \, dx dt$. {See [20, Corollary 2.1].}
\end{Remark}

\begin{Theorem} \label{Case3b} {\bf{(The unstable case.)}} Let $\nu=1,$  $0 \le A,$
$0<n,$ $-\frac{3}{2} < m-n,$  $m-n<2$ if $A=0$, and $m<M$ if $0<A.$

\smallskip\par\noindent
$i)$ Let $\alpha_0=\alpha_0(\zeta \equiv 1),$ $\alpha_1=\max\{
\alpha_0, \, -2m+n-1 \},$ and $\beta_0= \frac{3}{n+
 \alpha_1 +1}.$   Then
 $u^{1/\beta}(\cdot, \, t)
\in {\mathcal{C}}^1([-a, \, a]),$ for all $\beta \in (0, \,
\beta_0)$ for almost every $t>0.$

\smallskip\par\noindent
$ii)$ For any $\zeta$ satisfying (\ref{Azeta}), let
 $\alpha_2=\max\{
\alpha_0(\zeta), \, -2m +n-1, \, -m-1 \}.$  Then, for any $\alpha \in (\alpha_2, \, 2-n)/\{0,-1\}$ and for
any $\gamma$ satisfying (\ref{a2.6}),
\begin{multline} \label{localentropyII}
\frac1{\alpha(\alpha+1)}\int_{\Omega} \zeta^4 u^{1+\alpha}(x, \,
T) \, dx +  A \int_{Q_T} \zeta^4 u^{\alpha + M-1} u_x^2 \,dx\,dt + \\
c_{1} \Biggl[ \int_{P} \zeta^4 {u}^{\alpha + n -2 \gamma +1}
(u^{\gamma})_{xx}^2\,dx\,dt + \int_{Q_T} \zeta^4
{u}^{\alpha +n-3} {u}_x^4 \,dx\,dt \Biggr] \le\\
 c_{2} \int_{Q^T}( |\zeta_x|^4+ |\zeta \zeta_{xx}|^2) u^{n+\alpha
+1} \,dx\,dt + c_{3} \int_{Q_T} |(\zeta^3 \zeta_x )_x| {u}^{\alpha + m + 1} \,dx\,dt +\\
c_{4} \int_{Q_T} \zeta^4 u^{\alpha + 2m -n +1}\,dx\,dt+
\frac1{\alpha(\alpha+1)}\int_{\Omega} \zeta^4 {u_0}^{\alpha+1} \,
dx.
\end{multline}

\end{Theorem}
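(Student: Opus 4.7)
The plan is to mirror the proof of Theorem \ref{Case3a}, using the pre-limit inequality (\ref{entropy+}) in place of (\ref{entropy-}) so as to accommodate the destabilizing term at $\nu=+1$. As in Section 2, I would work along the subsequence of positive approximants $u_{\epsilon_k}\to u$ constructed in the proof of Theorem \ref{weaksolutions}. The two parts reduce, respectively, to a limit passage in (\ref{entropy+}) and to an application of Corollary \ref{3.4}.

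For part (ii), I would fix $\zeta$ satisfying (\ref{Azeta}) and $\alpha \in (\alpha_2, 2-n)\setminus\{0,-1\}$, and pass $\epsilon \to 0$ in (\ref{entropy+}). The definition of $\alpha_2 = \max\{\alpha_0(\zeta), -2m+n-1, -m-1\}$ was engineered precisely so that the three exponents $n+\alpha+1$, $\alpha+m+1$, and the new one $\alpha+2m-n+1$ appearing on the right of (\ref{entropy+}) are all strictly positive; the hypothesis $m-n>-3/2$ is exactly what makes this $\alpha$-interval non-empty, since (under $m>n-3/2$) the binding constraint in $\alpha_2$ is $-2m+n-1$, and $-2m+n-1<2-n$ iff $m-n>-3/2$. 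Together with the uniform $L^\infty(Q_T)$-bound on $u_\epsilon$ obtained from the energy bound (\ref{a0}) and the one-dimensional embedding $H^1(\Omega)\hookrightarrow L^\infty(\Omega)$, dominated convergence bounds the three spacetime integrals on the right, while the initial-data term converges to $(\alpha(\alpha+1))^{-1}\int_\Omega \zeta^4 u_0^{\alpha+1}\,dx$ by Definition \ref{alphazeta}, (\ref{icr}), and the explicit form of $G_\epsilon$. Fatou's lemma, combined with the convexity-based lower semicontinuity available from the $C^{4,1}_{\mathrm{loc}}(P)$-convergence $u_{\epsilon_k}\to u$ furnished by local parabolic regularity (cf.\ \cite{BBD95,BF90}), controls the left-hand side; in particular, the nonnegative $A$-term is retained by this same principle.

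For part (i), I would set $\zeta\equiv 1$ in (\ref{entropy+}); then both terms in $II$ vanish identically and the constraint $\alpha>-m-1$ drops out, leaving only $\alpha>\alpha_0$ and $\alpha>-2m+n-1$, i.e.\ $\alpha>\alpha_1$. The remaining $\epsilon$-uniform bounds are precisely the hypotheses (\ref{a2.34})--(\ref{a2.35}) of Corollary \ref{3.4} for every $\alpha\in \Psi := (\alpha_1, 2-n)\setminus\{0,-1\}$ and every $\gamma$ in the open interval (\ref{a2.6}). Applying Corollary \ref{3.4} with this $\Psi$ then yields $u^{1/\beta}(\cdot,t)\in C^1([-a,a])$ for a.e.\ $t>0$ and every $\beta\in(0,\,3/(n+\alpha_1+1))=(0,\beta_0)$.

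The main obstacle is not in the limit passage itself, which is essentially routine once (\ref{entropy+}) is in hand, but rather in the Young-type absorption (\ref{Iap}) of the destabilizing contribution $I_a$: it produces the remainder $u^{\alpha+2m-n+1}$ that enlarges $\alpha_0$ to $\alpha_1$ and strictly shrinks the admissible $\alpha$-range. This is exactly what forces the unstable hypothesis $m-n>-3/2$ in place of the stable $m-n>-2$, and why the regularity exponent $\beta_0$ here is correspondingly weaker than in Theorem \ref{Case3a}. Any sharpening of this restriction would require absorbing $I_a$ by a different device than the pointwise Young inequality used in (\ref{Iap}).
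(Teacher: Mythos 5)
Your proposal is correct and follows essentially the same route as the paper: part $ii)$ is obtained by letting $\epsilon \to 0$ in \eqref{entropy+} after checking that $\alpha>\alpha_2$ makes the exponents $n+\alpha+1$, $\alpha+m+1$, $\alpha+2m-n+1$ nonnegative (with $m-n>-\tfrac32$ guaranteeing the interval $(\alpha_2,2-n)$ is nonempty), and part $i)$ by taking $\zeta\equiv1$, so that the constraint $\alpha>-m-1$ disappears, and invoking Lemma \ref{regularity}/Corollary \ref{3.4} with $\Psi=(\alpha_1,2-n)\setminus\{0,-1\}$. The extra details you supply (uniform $L^\infty$ bound, dominated convergence on the right, Fatou on $P$ for the left, and convergence of the $G_\epsilon(u_{0\epsilon})$ term via \eqref{icr}) are exactly the standard steps the paper leaves implicit.
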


\begin{proof} To prove part $i),$ note that the condition $-\frac{3}{2}<m-n$ implies that $\alpha_1 \in [\frac{1}{2}-n, \, 2-n),$ and that
$\alpha_1 \ge \alpha_0,$ $\alpha_1 + 2m-n+1 \ge 0,$ then implement
Lemma \ref{regularity} with $\zeta = 1.$ Part $ii)$  follows by
noting that $\alpha_2 \in [\frac{1}{2}-n, \, 2-n),$ $\alpha >
\alpha_0,$ $\alpha + m + 1 >0,$ and $\alpha + 2m -n +1 \ge 0,$ then
letting $\epsilon \rightarrow 0$ in (\ref{entropy+}).
\end{proof}

\begin{Remark} The
 results given in Theorem \ref{Case3b} also hold for the exceptional cases
 (\ref{gthermoc2}),
 (\ref{attractive}\,i), with $A=0$ and
 with $m-n$ assuming the values
$m-n=-1$ and $m-n=0,$ respectively. This can be easily demonstrated
by following the arguments above, once one notices that estimates
(\ref{Iap}), (\ref{Ibp}) also hold for (\ref{gthermoc2}),
(\ref{attractive}\,i) when the value of $m-n$ is taken as $-1$ or
$0,$ respectively, by utilizing the bounds (\ref{boundsa}),
(\ref{boundsb}).
 \end{Remark}

\begin{Remark} When $0<n<3,$  Corollary \ref{2strong} and (\ref{s17})
imply that under the assumptions in  Theorems \ref{Case3a} and \ref{Case3b},
the weak solutions obtained as the limit of a subsequence of $\{u_{\epsilon}(x,t)\},$ the solutions  to $({\mathbb{P}}_{\epsilon}),$
are in fact
strong solutions in the sense of Bernis  \cite{B1}.
In particular this implies  that solutions with compact support have zero contact angle at the edge of the
support.

 Using (\ref{s17m})--(\ref{s17}), one can check that in the context of Theorems \ref{Case3a} and \ref{Case3b},
\begin{equation} \label{s17p}
\beta_0 \ge \left\{ \begin{array}{ll} 2, &\quad 0<n
\le \frac{3}{2},\\ [1ex] \frac{3}{n}, & \quad \frac{3}{2} < n< 3.
\end{array} \right. \end{equation}
These bounds correspond to the bounds which were obtained in \cite{BBD95} for the thin film equation (\ref{thinfilm}).
The regularity indicated in (\ref{s17p}) also corresponds to the contact line regularity of the self-similar source type
solutions which were demonstrated in \cite{B97} to exist for (\ref{G}) when $0<n<3,$ $\nu=\pm 1,$ $A=0,$ and $m=n+2.$
In this sense the regularity obtained is sharp, though   additional regularity is provided by the terms $\int_{Q_T} (u^{\frac{\alpha + m+1}{2}})_x^2 \, dx,$
$\int_{Q_T} A (u^{\frac{\alpha + M+1}{2}})_x^2 \, dx,$ for some parameter values.
\end{Remark}

\begin{Remark}
For $0<n<3$, with regard to the terms in (\ref{weak}) which do not depend on $m$ or  $M$, since the regularity  obtained   is
the same as regularity which was obtained for (\ref{thinfilm}) in \cite{BBD95}, these terms can be interpreted in the same sense as in \cite{CPAM96}, which is distributional
for $\frac{3}{8}<n<3$ and is in the weaker sense of (\ref{weak}) for $0<n\le \frac{3}{8}.$ With regard to the
terms in (\ref{weak}) which depend on $m$ and $M$,
these terms may be readily seen to be interpretable in a suitable distributional sense if $m>-1$ and in the weaker sense of (\ref{weak}) for
$m\le -1$.  Details of the case $\nu=-1$, $0<m<1$ are discussed in \cite{BP94}.
\end{Remark}

In the case of "strong slippage," in which $0<n<2,$ the local
entropy estimates provided by Theorems \ref{Case3a}, \ref{Case3b}
can be used to prove the finite speed propagation property for the
strong solutions obtained there {when
 $m>\frac{n}{2}$, see \S 4. In the
case of "weak slippage" in which
\begin{equation}\label{weakslip}
2\leq n<3,
\end{equation}
{the finite speed propagation property for standard thin film
equation (1.1) and $u_0\in H^1(\Omega)$ was proved by F. Bernis
\cite{B2} using local energy estimates, which were  derived there.
 The proof of the energy estimates is based on certain
 integral inequalities for smooth
functions which are positive on some interval, see \cite {B6}. Our proof of the finite speed of propagation
property  for  (1.6) for $m>\frac{n}{2}$ in the case of "weak slippage"
(3.20),  is based on  combined
usage of  local entropy estimates, which were derived in Theorems \ref{Case3a} and \ref{Case3b}, and local energy estimates which are derived below in
 Theorem \ref{Th.le}. The derivation of the  local energy
estimates makes use of a generalization of the Bernis inequalities mentioned earlier, for
nonnegative periodic functions, see Lemma 6.3}.} Somewhat similar
methodologies were employed in \cite{HSh,GS} where local energy
estimates were also derived, and {which also relied on combined usage  of  local entropy
and energy estimates.}
 Notably, the combination of local energy and energy estimates used here and in \cite{GS} relies on using local entropy estimates
with $-1 < \alpha <0$; previous approaches relied on local entropy estimates with $\alpha >0$ in proving the finite speed of propagation property \cite{BBD95,CPAM96,DGS01}.

\begin{Theorem}\label{Th.le}
Suppose that $\nu=\pm1,$ $2 \le n<3,$ $0 \le  A,$
 and that $m<M$ if $A>0,$ and $m<n+2$ if $A=0$ and $\nu=1.$
Suppose moreover if $\nu=-1,$ that  $m- \frac{3}{4}n \ge -1,$
and  if $\nu=1,$ that  $m-\frac{2}{3}n>-\frac{2}{3}$ if $2 \le n< \frac{5}{2}$ and $m-n
> -\frac{3}{2}$ if $\frac{5}{2} \le n <3.$
Under these assumptions,  the strong solutions obtained in Theorems \ref{Case3a},
\ref{Case3b} satisfy the following local energy estimate
\begin{multline}\label{62}
\int_\Omega\zeta^6|u_x(x,T)|^2\,dx+d_{1}\int_{Q_T}\zeta^6\big((u^{\frac{n+2}6})_x^6
+(u^{\frac{n+2}3})_{xx}^3+(u^{\frac{n+2}2})_{xxx}^2\big)\,dx\,dt+\\d_{1}\int_{Q_T\cap\{u>0\}
} \zeta^6u^nu_{xxx}^2\,dx\,dt\leq\int_\Omega\zeta^6|u_{0x}(x)|^2\,dx+\\
d_{2}\int_{Q_T}u^{n+2}(|\zeta_x|^6\,dx\,dt+|\zeta\zeta_{xx}|^3)\,dx\,dt-\\\int_{Q_T\cap\{u>0\}}(\nu
u^mu_x-Au^Mu_x)(u_x\zeta^6)_{xx}\,dx\,dt,
\end{multline}
where  $\zeta(x)$ is an arbitrary nonnegative function from
$C^4([-a,a])$.
\end{Theorem}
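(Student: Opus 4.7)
The natural test function for this local $H^1$-type energy estimate is $\phi_\epsilon:=-(\zeta^6 u_{\epsilon,x})_x$. First I would insert $\phi_\epsilon$ into the regularized equation $(\mathbb{P}_\epsilon)$, integrate over $Q_T$, and integrate by parts in $x$, discarding all boundary contributions since $\zeta$ is compactly supported in $(-a,a)$. The parabolic term becomes $\tfrac12\int_\Omega\zeta^6u_{\epsilon,x}^2(x,T)\,dx-\tfrac12\int_\Omega\zeta^6u_{0\epsilon,x}^2\,dx$. Expanding $(\zeta^6u_{\epsilon,x})_{xx}=\zeta^6u_{\epsilon,xxx}+2(\zeta^6)_x u_{\epsilon,xx}+(\zeta^6)_{xx}u_{\epsilon,x}$ splits the flux contribution into a principal diffusion term $\int_{Q_T}\zeta^6 f_\epsilon(u_\epsilon)u_{\epsilon,xxx}^2\,dx\,dt$, two $\zeta$-derivative weighted cross terms pairing $f_\epsilon(u_\epsilon)u_{\epsilon,xxx}$ with $u_{\epsilon,xx}$ or $u_{\epsilon,x}$, and a lower-order contribution $-\int_{Q_T}f_\epsilon(u_\epsilon)(\nu u_\epsilon^{m-n}-Au_\epsilon^{M-n})u_{\epsilon,x}(\zeta^6u_{\epsilon,x})_{xx}\,dx\,dt$; the latter will not be estimated further and becomes the final term on the right of \eqref{62} in the limit.

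The two cross terms from the principal flux I would handle by Young's inequality: a $\delta$-multiple of $\int_{Q_T}\zeta^6 f_\epsilon(u_\epsilon)u_{\epsilon,xxx}^2$ is absorbed back into the good diffusion term, while the complementary pieces of the form $\int\zeta^4\zeta_x^2 f_\epsilon u_{\epsilon,xx}^2$ and $\int((\zeta^6)_{xx})^2\zeta^{-6}f_\epsilon u_{\epsilon,x}^2$ are further interpolated via a second Young splitting against $\int\zeta^6 f_\epsilon u_{\epsilon,xxx}^2$ to produce, after exponent collection, a remainder of the form $c\int_{Q_T}u_\epsilon^{n+2}(|\zeta_x|^6+|\zeta\zeta_{xx}|^3)\,dx\,dt$. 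To recover the three derivative norms $(u^{(n+2)/6})_x^6$, $(u^{(n+2)/3})_{xx}^3$, and $(u^{(n+2)/2})_{xxx}^2$ as extra good terms on the left, I would invoke the generalized Bernis inequalities (Lemma 6.3), applied to the periodic reflection of $u_\epsilon$ as justified in the preamble to \S 2; each such norm is dominated by $\int\zeta^6 u^n u_{xxx}^2$ modulo a remainder of the same $\int u^{n+2}(|\zeta_x|^6+|\zeta\zeta_{xx}|^3)$-type, and the restriction $2\le n<3$ is precisely what places the exponents $(n+2)/k$ in the admissible range for those inequalities.

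The final step is the passage $\epsilon\downarrow 0$. The strong-solution regularity from Theorems \ref{Case3a}--\ref{Case3b} together with the uniform H\"older bound (\ref{Holder}) yields uniform convergence $u_\epsilon\to u$ on $\overline{Q_T}$ and the pointwise identification $f_\epsilon(u_\epsilon)\to u^n$ on $\{u>0\}$. Weak lower semicontinuity applied to the good quadratic diffusion term produces the $Q_T\cap\{u>0\}$ restriction visible on the left of \eqref{62}; dominated convergence handles the $\zeta$-weighted $u^{n+2}$ remainder on the right; and $u_{0\epsilon,x}\to u_{0,x}$ in $L^2$ takes care of the initial energy.

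The main obstacle, beyond the limit passage, is the exponent bookkeeping in the two Young splittings and in the Bernis interpolation. The hypotheses $m-\tfrac34 n\ge-1$ when $\nu=-1$ and the case-split $m-\tfrac23 n>-\tfrac23$ or $m-n>-\tfrac32$ when $\nu=1$ are precisely what is required so that (i) the kept lower-order flux contribution $(\nu u^m u_x-Au^M u_x)(u_x\zeta^6)_{xx}$ is meaningful against the regularity obtained, and (ii) the Young splittings generate on the right exclusively a $u_\epsilon^{n+2}$-weighted remainder rather than a power of $u_\epsilon$ that would fall outside the control provided by the entropy estimates of Theorems \ref{Case3a}--\ref{Case3b}.
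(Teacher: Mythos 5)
Your starting point (testing $(\mathbb P_\epsilon)$ with $-(\zeta^6 u_{\epsilon,x})_x$, keeping the lower--order flux term unestimated, and upgrading the principal term via the generalized Bernis inequalities of Lemma 6.3) is the same as the paper's, but there are two genuine gaps. First, the hypotheses on $m$ do not enter through ``exponent bookkeeping in the Young splittings'' of the cross terms --- those splittings only produce $u^{n+2}$--weighted remainders, independently of $m$. Their actual role is in obtaining the uniform a priori bound $\int_\Omega |u_{\epsilon,x}(x,T)|^2\,dx+\int_{Q_T} f_\epsilon(u_\epsilon)|u_{\epsilon,xxx}|^2\,dx\,dt\le C$ and in justifying the passage $\epsilon\to0$: the paper sets $\zeta\equiv1$, estimates the lower--order term by Young against $\int f_\epsilon u_{\epsilon,xxx}^2$ and the entropy--controlled quantity $\int u_\epsilon^{\alpha+n-3}u_{\epsilon,x}^4$ (the global entropy estimates of Theorems \ref{Case3a}--\ref{Case3b} with $\zeta\equiv1$ and, crucially, $-1<\alpha<0$, which is admissible precisely because $2\le n<3$), leaving a term $\int u_\epsilon^{4m-2n-(\alpha+n)+3}$ which is bounded only if $4m-2n-(\alpha+n)+3\ge0$; this is exactly where $m-\tfrac34 n\ge-1$ (resp.\ $3m-2n>-2$ or $m-n>-\tfrac32$, because of the extra constraint $\alpha>-2m+n-1$ when $\nu=1$) comes from. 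The same exponent condition yields the quantitative smallness $c\,\eta^{(4m-2n-(\alpha+n)+3)/4}$ of the integrals over $\{u<\eta\}$, which is what makes the limit in the lower--order and cross terms legitimate; ``dominated convergence'' and weak lower semicontinuity alone do not handle the degeneracy set. Your proposal never produces these uniform bounds nor the $\{u<\eta\}$ argument, so the limit passage is unjustified as written.

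Second, your order of operations for the Bernis step does not work. At the $\epsilon$--level the identity controls only $\int\zeta^6 f_\epsilon(u_\epsilon)u_{\epsilon,xxx}^2$, and $f_\epsilon(s)=s^n/(1+\epsilon s^{n-4})\le s^n$ degenerates relative to $s^n$ near $s=0$ non--uniformly in $\epsilon$ (since $n<4$). Lemma 6.3 applied to $u_\epsilon$ bounds the three derivative norms by $C\int\zeta^6 u_\epsilon^{\,n}u_{\epsilon,xxx}^2$ plus a remainder, and this term cannot be absorbed by $\int\zeta^6 f_\epsilon(u_\epsilon)u_{\epsilon,xxx}^2$. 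The paper therefore passes to the limit \emph{first}, obtaining $\int_{Q_T\cap\{u>0\}}\zeta^6 u^n u_{xxx}^2$ on the left by lower semicontinuity on $\{u>\eta\}$ and the smallness estimate on $\{u<\eta\}$, and only then applies Lemma 6.3 to the limit solution $u$, which is licit because $u(\cdot,t)\in C^1(\bar\Omega)$ for a.e.\ $t$ (the strong--solution regularity). Relatedly, the cross terms $\int u^n u_{xxx}\,[2u_{xx}(\zeta^6)_x+u_x(\zeta^6)_{xx}]$ are not disposed of by a two--factor Young splitting against the third--derivative term alone: the paper's three--factor H\"older estimates need the Bernis good terms $(u^{(n+2)/6})_x^6$ and $(u^{(n+2)/3})_{xx}^3$ on the left to absorb the $u_x$ and $u_{xx}$ factors, which again forces the Bernis step (and hence the limit passage) to precede the cross--term absorption.
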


\begin{proof}
In order to derive the local energy estimate, we test the equation in the approximating problem, $(\mathbb
P_\epsilon),$ with $-(\zeta^6{u_\epsilon}_x)_x,$ and easily deduce
that
\begin{multline}\label{51}
\int_\Omega\frac{\zeta^6}{2}|{u_\epsilon}_x(x,T)|^2\,dx+\int_{Q_T}\zeta^6f_\epsilon(u_\epsilon)
|{u_\epsilon}_{xxx}|^2 \,dx\,dt=\\-\int_{Q_T}
f_\epsilon(u_\epsilon){u_\epsilon}_{xxx}[2{u_\epsilon}_{xx}(\zeta^6)_x+{u_\epsilon}_x
(\zeta^6)_{xx}]\,dx\,dt-\\\int_{Q_T} f_\epsilon(u_\epsilon)(\nu
u_\epsilon^{m-n}{u_\epsilon}_x-Au_\epsilon^{M-n}{u_\epsilon}_x)({u_\epsilon}_x\zeta^6)_{xx}\,dx\,dt+\\
\int_\Omega\frac{\zeta^6}{2}|u_{0\epsilon x}|^2\,dx.
\end{multline}

Assuming (\ref{weakslip}) and noting (\ref{s17m}), (\ref{s17}), it
is easy to check that $\max\{ \alpha^{\ast}, \, -m-1 \} <0$ in  Theorem \ref{Case3a}   and  that  $\max\{ \alpha^{\ast}, \, -2m + n -1, \, -m -1 \} <0$
in Theorem
\ref{Case3b}.
Hence if $n$ satisfies (\ref{weakslip}), then for arbitrary $\zeta$ satisfying (\ref{Azeta}) or
$\zeta = 1,$ $\alpha$ may be chosen to
be fixed and to satisfy
 \begin{equation}\label{52}
-1<\alpha<0,
\end{equation}
in addition to satisfying the constraints indicated in Part $ii)$ of
either theorem.

For $\alpha$ satisfying \eqref{52}, we have due to
(\ref{massconservation})
\begin{multline}\label{53}
\int_\Omega u_\epsilon(x,T)^{1+\alpha}\,dx\leq
|\Omega|^{-\alpha}\bigg(\int_\Omega
u_\epsilon(x,T)\,dx\bigg)^{1+\alpha}=\\
|\Omega|^{-\alpha}\bigg(\int_\Omega
u_{0\epsilon}\,dx\bigg)^{1+\alpha}\leq d_{3}.
\end{multline}
 Setting
$\zeta = 1$ in (\ref{entropy-}) and employing
(\ref{icr}),(\ref{53}), we obtain the following inequality when
$\nu=-1,$
\begin{multline}\label{54}
c_3
\Biggl[\int_{Q_T}u_\epsilon^{\alpha+n-2\gamma+1}(u_\epsilon^\gamma)_{xx}^2 \,dx\,dt
+\int_{Q_T}u^{\alpha+n-3}_{\epsilon}{u_\epsilon}_ x^4\,dx\,dt\Biggr]+\\
\int_{Q_T}(u_\epsilon^{\alpha+m-1}+Au_\epsilon^{\alpha+M-1}){u_\epsilon}_
x^2 \,dx\,dt \leq  \int_\Omega G_\epsilon(u_{0\epsilon})\,dx+d_{4}\leq
d_{5}.
\end{multline}
Similarly, setting $\zeta = 1$ in (\ref{entropy+}), employing
(\ref{icr}),(\ref{53}), and noting that the assumptions on $\alpha$
imply that $\alpha > \alpha_2 \ge -2m+n-1,$ we obtain the following
inequality when $\nu=1,$
\begin{multline}\label{55}
c_8 \Biggl[
\int_{Q_T}u_\epsilon^{\alpha+n-2\gamma+1}(u_\epsilon^\gamma)^2_{xx} \,dx\,dt +\int_{Q_T}u_\epsilon^{\alpha+n-3}
{u_\epsilon
}_x^4 \,dx\,dt\Biggr] +\\ \int_{Q_T} A u_\epsilon^{\alpha+M-1}{u_\epsilon}_x^2 \,dx\,dt\leq
 \int_\Omega G_\epsilon(u_{0\epsilon})\,dx+c_{5}
\int_{Q_T}u_\epsilon^{\alpha+2m-n+1}\,dx\,dt+ d_{6} \leq d_{7}.
\end{multline}

Next we pass to the limit $\epsilon\to0$ in \eqref{51}. First,
setting $\zeta = 1$ in  \eqref{51} yields that for any $\delta>0,$
\begin{multline}\label{56}
2^{-1}\int_\Omega|{u_\epsilon}_x(x,T)|^2\,dx+\int_{Q_T}
f_\epsilon(u_\epsilon)|{u_\epsilon}_{xxx}|^2\,dx\,dt\leq
2^{-1}\int_\Omega|u_{0\epsilon x}
|^2\,dx+\\
(1+A|\sup
u_\epsilon|^{M-m})\int_{Q_T}f_\epsilon(u_\epsilon)u_\epsilon^{m-n}|{u_\epsilon}_x||{u_\epsilon}_{xxx}|\,dx\,dt\leq
2^{-1}\int_\Omega|u_{0\epsilon x}|^2\,dx+\\
\delta\int_{Q_T}f_\epsilon(u_\epsilon)|{u_\epsilon}_{xxx}|^2\,dx\,dt+\int_{Q_T}u_\epsilon^{\alpha+n-3}
{u_\epsilon}_x^4\,dx\,dt+c(\delta)\int_{Q_T}u_\epsilon^{4m-2n-(\alpha+n)+3}\,dx\,dt.
\end{multline}
Suppose that
\begin{equation}\label{57}
4m-2n-(\alpha+n)+3\geq0,
\end{equation}
then  setting $\delta=1/2$ in \eqref{56} and  using the estimates
\eqref{54}, \eqref{55}, we deduce that
\begin{equation}\label{59}
\int_\Omega|{u_\epsilon}_x(x,T)|^2\,dx+\int_{Q_T}f_\epsilon(u_\epsilon)|{u_\epsilon}_{xxx}|^2\,dx\,dt<d_{8}.
\end{equation}
\begin{Remark}\label{r:3.8}
In the context  of Theorem \ref{Case3a}, when
$n$ satisfies (\ref{weakslip}), $m > n-2 \ge 0,$ hence $\max\{
\alpha^{\ast}, \, -m -1 \}= \alpha^{\ast}= -1.$ Thus for arbitrary
initial data satisfying (\ref{ic}),(\ref{icr}), (\ref{57}) is
satisfied for some admissible $\alpha,$ if
$$ m - \frac{3}{4} n  \ge -1,$$ which is stronger than the
previous constraint, $m-n>-2$.
In the context  of Theorem \ref{Case3b}, when $n$
satisfies (\ref{weakslip}), $m>n-2 \ge 0;$ hence $\max\{
\alpha^{\ast}, \, -m-1 \} = \alpha^{\ast}=-1,$ but $\alpha$ there must
also satisfy $\alpha > -2m + n -1.$
 It is easy to
check that  \eqref{57} holds for some admissible $\alpha,$ if and
only if $m,n$ satisfy the condition
\begin{equation}\label{58}
3m-2n>-2.
\end{equation}
Recalling the constraint $m-n>-\frac {3}{2}$ in Theorem
\ref{Case3b}, it
  is easy to check that
 \eqref{58} constitutes an additional constraint if $n<\frac52.$
\end{Remark}

Using the estimates \eqref{59}, \eqref{54}, \eqref{55}, it is easy
to check that the integrals on the right-hand side of \eqref{51}
are uniformly bounded with respect to $\epsilon$ if  \eqref{57} is
satisfied. For arbitrary $\eta>0,\ u_\epsilon\to u$ strongly in
the space $C^{4,1}(\{u>\eta\})$. Therefore passage to the limit
$\epsilon\to0$ in all of the  integrals in \eqref{51} over the
domain $\{u>\eta\}$ is straightforward. As to integrals over the
domain $\{u<\eta\},$ we have, for example, by virtue of
\eqref{57},
\begin{multline*}
\bigg|\int_{Q_T\cap\{u<\eta\}}f_\epsilon(u_\epsilon){u_\epsilon}_{xxx}{u_\epsilon}_x
{u_\epsilon}^{m-n}
\zeta^6 \,dx\,dt\bigg|\leq\\
\bigg(\int_{Q_T\cap\{u<\eta\}}f_\epsilon(u_\epsilon)|{u_\epsilon}_{xxx}|^2\,dx\,dt\bigg)^{1/2}\bigg(
\int_{Q_T\cap\{u<\eta\}}u_\epsilon^{\alpha+n-3}{u_\epsilon}_x^4\,dx\,dt\bigg)^{1/4}\times\\\bigg(
\int_{Q_T\cap\{u<\eta\}}u^{4m-2n-(\alpha+n)+3}\,dx\,dt\bigg)^{1/4}\leq
c\eta^{\frac{4m-2n-(\alpha+n)+3}{4}}\to0\text{ as } \eta\to0.
\end{multline*}
Analogously, it is easy to check that all of the other integrals
over $\{u<\eta\}$ on the right-hand side of \eqref{51} are bounded
from above by some continuous function, $h(\eta),$ such that $\
h(\eta)\to0$ as $\eta\to 0$. Therefore, first passing to the limit
$\epsilon\to0,$ and afterwards letting  $\eta\to0,$ we easily
obtain
\begin{multline}\label{60}
2^{-1}\int_\Omega\zeta^6|u_x(x,T)|^2\,dx+\int_{Q_T\cap\{u>0\}}\zeta^6u^nu_{xxx}^2\,dx\,dt\leq\\
2^{-1}\int_\Omega
\zeta^6|u_{0x}|^2\,dx-\int_{Q_T\cap\{u>0\}}u^nu_{xxx}[2u_{xx}(\zeta^6)_x+u_x(\zeta^6)_{xx}]\,dx\,dt-\\
\int_{Q_T\cap\{u>0\}}(\nu u^mu_x-Au^Mu_x)(u_x\zeta^6)_{xx}\,dx\,dt.
\end{multline}

Since $u(\cdot,t)\in C^1(\bar \Omega)$ for almost $t\in[0,T]$, it
is possible to estimate from below the second term on the left
hand side of \eqref{60}  using the generalized Bernis
inequalities given in Lemma 6.3. As result we obtain:
\begin{multline}\label{61}
2^{-1}\int_\Omega\zeta^6|u_x(x,T)|^2\,dx+d_{7}\int_{Q_T}\zeta^6\big((u^{\frac{n+2}6})_x^6
+|(u^{\frac{n+2}3})_{xx}|^3+(u^{\frac{n+2}2})_{xxx}^2\big)\,dx\,dt
+\\d_9\int_{Q_T\cap\{u>0\}}\zeta^6u^nu_{xxx}^2\,dx\,dt\leq2^{-1}\int_\Omega\zeta^6|u_{0x}(x)|^2\,dx+
d_{10}\int_{Q_T}|\zeta_x|^6u^{n+2}\,dx\,dt-\\\int_{Q_T\cap\{u>0\}}u^nu_{xxx}[2u_{xx}(\zeta^6)_x
+u_x(\zeta^6)_{xx}]\,dx\,dt-\\\int_{Q_T\cap\{u>0\}}(\nu
u^mu_x-Au^Mu_x)(u_x\zeta^6)_{xx}\,dx\,dt,
\end{multline}

Next we estimate the terms in the third integral on the right-hand
side as in \cite{HSh}; namely,
\begin{multline*}
\int_{Q_T\cap\{u>0\}}u^nu_{xxx}u_x(\zeta^6)_{xx}\,dx\,dt\\=6\int_{Q_T\cap\{u>0\}}
(u^{\frac
n2}u_{xxx}\zeta^3)(u^{\frac{n-4}6}u_x\zeta)(u^{\frac{n+2}3}(5\zeta_x^2+\zeta\zeta_{xx}))\,dx\,dt\\\leq
6\bigg(\int_{Q_T\cap\{u>0\}}
u^nu_{xxx}^2\zeta^6\,dx\,dt\bigg)^{1/2}
\bigg(\int_{Q_T\cap\{u>0\}}u^{n-4}u_x^6\zeta^6\,dx\,dt\bigg)^{1/6}\\\times
\bigg(\int_{Q_T}u^{n+2}(5\zeta_x^2+\zeta\zeta_{xx})^3\,dx\,dt\bigg)^{1/3}\leq
\delta\int_{Q_T\cap\{u>0\}}
(u^nu_{xxx}^2+(u^{\frac{n+2}6})_x^6)\zeta^6\,dx\,dt\\+c(\delta)\int_{Q_T}u^{n+2}(\zeta_x^6+(\zeta\zeta_{xx})^3
)\,dx\,dt, \qquad\forall\,\delta>0,\hskip 50 pt
\end{multline*}
and
\begin{multline*}
\int_{Q_T\cap\{u>0\}}u^nu_{xxx}u_{xx}(\zeta^6)_x\,dx\,dt=6\int_{Q_T\cap\{u>0\}}
(u^{\frac
n2}u_{xxx}\zeta^3)\\\times\Big[\Big(u^{\frac{n-1}3}u_{xx}+\frac{n-1}3u^{\frac{n-4}3}u_x^2-\frac{n-1}3
u^{\frac{n-4}3}u_x^2\Big)\zeta^2\Big](u^{\frac
n2-\frac{n-1}3}\zeta_x)\,dx\,dt\\\leq 6\bigg(\int_{Q_T\cap\{u>0\}}
u^nu_{xxx}^2\zeta^6\,dx\,dt\bigg)^{1/2}\bigg[\int_{Q_T\cap\{u>0\}}
\Big(\frac3{n+2}|(u^{\frac{n+2}3})_{xx}|\\+\frac{n-1}3\Big(\frac6{n+2}\Big)^2(u^{\frac{n+2}6})_x^2
\Big)^3\zeta^6 \,dx\,dt\bigg]^{1/3}\bigg(\int_{Q_T}u^{n+2}\zeta_x^6\,dx\,dt\bigg)^{1/6}\\\leq
\delta\int_{Q_T\cap\{u>0\}}
(u^nu_{xxx}^2+|(u^{\frac{n+2}3})_{xx}|^3+(u^{\frac{n+2}6})_x^6)\zeta^6\,dx\,dt\\
+c(\delta)\int_{Q_T}u^{n+2}\zeta_x^6\,dx\,dt,\qquad\forall\,\delta>0.
\end{multline*}

Using these estimates in  \eqref{61} with
$\delta=d_9/6,$ we obtain (\ref{62}).
\end{proof}

\section{Finite speed propagation (strong slippage: $0<n<2$)}
\setcounter{equation}{0}

In this section, we consider  problem $(\mathbb P)$ with initial
data, $u_0$, which satisfies \eqref{ic} and which also possesses
the additional property,
\begin{equation}\label{4.4}
\supp u_0\subset\{x\leq0\}.
\end{equation}
Let us introduce the following family of subdomains:
\begin{equation}\label{4.5}
\begin{aligned}
&\Omega(s)=\Omega\cap\{x\,|\, x>s\}\quad\forall\,s\in(-a,a),\qquad
&Q_{t}(s)=\Omega(s)\times(0,t).
\end{aligned}
\end{equation}

\begin{Theorem}\label{t:4.1}
Let  $u_0$ satisfy \eqref{ic}, \eqref{4.4},  let $\nu=\pm1,$ $0
\le A,$ $0<n<2,$ $m<M$ if $0<A,$ $ m<n+2$ if $\nu=1,$ $A=0,$ and
$$ m>0\quad\text{if}\quad \nu=-1,\qquad m>\frac
n2\quad\text{if}\quad \nu=1,$$
 and let $u$ denote an
arbitrary strong nonnegative solution of problem $(\mathbb P)$,
obtained as in Theorem 1, which satisfies the local entropy
estimate in Theorem 2 or 3. Then $u$ possesses the finite speed of
propagation property in the sense that there exists a continuous
function, $s(t),$  such that $\ s(0)=0$, and a positive time
$T_0,$ such that
\begin{equation}\label{4.7}
\supp u(\cdot,t)\subset\overline\Omega\setminus\Omega(s(t)),
\qquad s(t)<a \quad \forall\,t<T_0,\ s(T_0)=a.
\end{equation}
\end{Theorem}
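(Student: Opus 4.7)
The plan is to apply the functional inequality (FI) method in combination with the local entropy estimate of Theorem \ref{Case3a} (if $\nu=-1$) or Theorem \ref{Case3b} (if $\nu=1$), and then to invoke the generalised Stampacchia Lemma for systems (Lemma 6.2) to extract the moving boundary $s(t)$. Since $0<n<2$, the admissible window for $\alpha$ in those theorems includes positive values, and I would fix some $\alpha>0$ within the window; the standing hypothesis $m>0$ (respectively $m>n/2$) then ensures $\alpha+m+1>\alpha+1$ (respectively $\alpha+2m-n+1>\alpha+1$), providing the power gap between the right--hand and left--hand sides that drives the FI method.

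First I would introduce, for each $s\in(0,a)$ and each small $\eta>0$, a cut-off $\zeta=\zeta_{s,\eta}\in\mathcal C^{4}([-a,a])$ supported in $[s-\eta,a]$, identically $1$ on $[s,a]$, with $|\zeta^{(k)}|\le C\eta^{-k}$, and insert it into (\ref{localentropyI}) or (\ref{localentropyII}), so that the cut-off derivative terms on the right are supported in the thin strip $[s-\eta,s]$. Next I would assemble the entropy functional
\begin{equation*}
E(s,t):=\sup_{0\le\tau\le t}\int_{\Omega(s)}u^{\alpha+1}(x,\tau)\,dx+\int_{Q_{t}(s)}u^{\alpha+n-3}u_{x}^{4}\,dx\,d\tau+\int_{Q_{t}(s)}u^{\alpha+n-2\gamma+1}(u^{\gamma})_{xx}^{2}\,dx\,d\tau,
\end{equation*}
together with auxiliary $\eta$--dependent companions in the spirit of \cite{HSh,DGS01,Sh2}. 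The entropy bound should then yield a coupled system of functional inequalities of schematic shape $E(s,t)\le C\eta^{-p}\bigl(E(s-\eta,t)-E(s,t)\bigr)^{\theta}$ with some $p>0$ and $\theta>1$, obtained by estimating the cut-off error integrals $\int(|\zeta_{x}|^{4}+|\zeta\zeta_{xx}|^{2})u^{n+\alpha+1}$, $\int|(\zeta^{3}\zeta_{x})_{x}|u^{\alpha+m+1}$, and (when $\nu=1$) $\int\zeta^{4}u^{\alpha+2m-n+1}$ by interpolating them against the two dissipation terms in $E(s,t)$ via Gagliardo--Nirenberg inequalities for functions vanishing to the right of $s$.

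The main obstacle I expect is the unstable case $\nu=+1$, where the forcing $\int\zeta^{4}u^{\alpha+2m-n+1}\,dx\,dt$ has no dissipative origin and must be absorbed entirely through the power gap provided by $m>n/2$; this will force $\alpha$ to be chosen close to zero so that the Young--splitting exponents all line up, and some care is needed to keep the constant in front of the absorbed dissipation strictly less than one. The strong--slippage restriction $0<n<2$ enters at precisely this point, making the Gagliardo--Nirenberg scaling favourable enough for the interpolation to close with a strictly super-linear exponent $\theta>1$. Once the system of functional inequalities is in place, Lemma 6.2 delivers a continuous nondecreasing function $s(t)$ with $s(0)=0$ defined on some maximal interval $[0,T_{0})$, along which $E(s(t),t)=0$ and therefore $\supp u(\cdot,t)\subset\overline\Omega\setminus\Omega(s(t))$; the quantitative propagation bound built into Lemma 6.2 then forces $s(t)\to a$ at a finite time $T_{0}<\infty$, yielding (\ref{4.7}).
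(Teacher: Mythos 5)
Your overall strategy---the local entropy estimate with a positive $\alpha$ (available because $0<n<2$), cut-off functions $\zeta_{s,\delta}$, Gagliardo--Nirenberg interpolation, and the Stampacchia lemma for systems---is the same as the paper's. However, there is a genuine gap at the decisive point, namely the treatment of the unstable forcing term $\int_{Q_T}\zeta^4 u^{\alpha+2m-n+1}\,dx\,dt$ when $\nu=1$. Your schematic inequality $E(s,t)\le C\eta^{-p}\bigl(E(s-\eta,t)-E(s,t)\bigr)^{\theta}$ contains only annulus (difference) terms weighted by negative powers of $\eta$, and your stated plan is to ``absorb'' the forcing into the dissipation while keeping a constant less than one. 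This cannot work as written: the forcing carries $\zeta^4$, not derivatives of $\zeta$, so after localization it is an integral over the whole of $Q_T(s)$ with no $\delta^{-p}$ weight and no difference structure, and the Gagliardo--Nirenberg step converts it into a \emph{superlinear} power (exponent $1+\mu_3$, $\mu_3=\frac{4(2m-n)}{n+4(\alpha+1)}>0$ thanks to $m>n/2$) of exactly the quantities controlled on the left; a superlinear term cannot be absorbed into a linear one by tuning constants, only by smallness. In the paper this term is therefore kept as a third energy function $I_T(s)=\int_{Q_T(s)}u^{\alpha+2m-n+1}\,dx\,dt$, entering the system \eqref{4.26} as a term with zero $\delta$-exponent in the sense of Lemma 6.2, and the needed smallness is precisely the hypothesis $Q(s_1)<1$ of that lemma, verified at $s_1=0$ because H\"older in time produces factors $T^{1-\mu_i/4}$ and $I_T(0)\le cT$. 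This mechanism is also exactly where the positive time $T_0$ in the statement originates; your proposal attributes $T_0$ vaguely to a ``maximal interval'' and never identifies the role of small $T$, so the core of the unstable case is missing.

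Two smaller points. The interpolation requires $\beta_i<n+4(\alpha+1)$ for $\beta_i\in\{n,\,m,\,2m-n\}$, which pushes $\alpha$ toward the upper end $2-n$ of the admissible window (not toward zero, as you suggest), and even so one needs $m<6-n$; for $m\ge 6-n$ the paper replaces $m$ by some $\overline m\in(\tfrac n2,\,6-n)$ using the boundedness of $u$, a step absent from your plan. Finally, the hypothesis $\supp u_0\subset\{x\le 0\}$ must be used explicitly to kill the initial-data term $h_0(s)=\int_{\Omega(s)}u_0^{1+\alpha}\,dx$ for $s>0$ before Lemma 6.2 can be applied; this is implicit but not stated in your argument.
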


\begin{Remark} The analysis in the section  applies also
to (\ref{attractive}\hbox{\,i\,}).
\end{Remark}

\begin{proof}
Since by assumption $0<n<2,$ the local entropy estimate,
\eqref{localentropyI} or \eqref{localentropyII},  holds for some
positive $\alpha<2-n$. The proof of the finite speed of propagation
property is based on careful analysis of the properties of solutions
satisfying these inequalities with  positive $\alpha$. In the stable
case, $\nu=-1,$ such analysis was performed  in \cite{DGS01} with
$A=0$. If $\nu=-1$ and $A>0,$ a similar proof can be given.
Therefore we restrict our attention here to  the unstable case,
$\nu=1.$ Although we set $A=0$ for simplicity,  all our estimates
are also valid for the case $\nu=1,\ A>0.$  Thus, the estimate
\eqref{localentropyII} can be written in the form
\begin{multline}\label{4.8}
\frac1{\alpha(\alpha+1)}\int_\Omega \zeta^4u^{1+\alpha}(x,T)\,dx +
c_{1}\int_{Q_T}
\zeta^4\bigl(u^{\alpha+n-2\gamma+1}(u^\gamma)^2_{xx}+u^{\alpha+n-3}u_x^4\bigr)\,dx\,dt
\\\leq\frac1{\alpha(\alpha+1)}
\int_\Omega\zeta^4u_0^{1+\alpha}\,dx+cR,\qquad\alpha>0,
\end{multline}
where the constant $c=\max(c_{2},c_{3},c_{4})$ and
\begin{multline*}
R:=R_1+R_2+R_3=\int_{Q_T}(|\zeta_x^4|+|\zeta\zeta_{xx}|^2)u^{n+\alpha+1}\,dx\,dt+\\
\int_{Q_T}|(\zeta^3\zeta_x)_x|u^{\alpha+m+1}\,dx\,dt+\int_{Q_T}\zeta^4u^{\alpha+2m-n+1}\,dx\,dt.
\end{multline*}
Note now that due to the constraint $m>n/2,$ the
 minimal power  of
$u(x,t)$ in terms of the form $\int_{Q_T} \zeta^4 u^{\sigma_1} \, dx dt$ on the right hand side  of  (\ref{4.8}) is
strictly greater than the minimal power of $u$ in terms of the  form $\int_{\Omega} \zeta^4 u^{\sigma_2} \, dx$ appearing on the left hand side of  (\ref{4.8});
namely,
$$
\alpha + 2m-n+1>\alpha+1\Longleftrightarrow m>\frac{n}{2}.
$$
{This shall allow us to derive a  functional relationship  from (\ref{4.8}), for a suitable set of energy
functions,  which satisfies} the conditions of the
generalized Stampacchia lemma, Lemma 6.2.

To put (\ref{4.8}) in an appropriate format, we  set $\zeta(x)=\zeta_{s,\delta}(x)$, where
   $\zeta_{s,\delta}(x)$ is the cut-off function defined as
\begin{equation}\label{4.13}
\zeta_{s,\delta}(x)=\varphi\Bigl(\frac{x-s}\delta\Bigr),
\end{equation}
where $s\in\mathbb R, \delta>0$ are free parameters, and
$\varphi(r)$ is a nonnegative nondecreasing $C^2(\mathbb R)$
function such that:
\begin{equation}\label{4.14}
\varphi(r)=0\text{ for } r\leq0,\quad \varphi(r)=1\text{ for } r>1.
\end{equation}

We now define  three energy functions {which are}
 connected with the terms on the right hand side
of our entropy estimate \eqref{4.8},
\begin{equation}\label{4.15}
\begin{aligned}
J_T(s)&:=\int_{Q_T(s)}u^{\beta_1+\alpha+1}\,dx\,dt,
E_T(s):=\int_{Q_T(s)}u^{\beta_2+\alpha+1}\,dx\,dt,\\
I_T(s)&:=\int_{Q_T(s)}u^{\beta_3+\alpha+1}\,dx\,dt,
\end{aligned}
\end{equation}
where $\beta_1=n,\ \beta_2=m, \ \beta_3=2m-n$. Using
 \eqref{4.8}, we {shall} deduce {a system of} three functional inequalities
{for}  $J_T(s),$ $E_T(s),$  $I_T(s).$ {Having set}
$\zeta(x)=\zeta_{s,\delta}(x)$ in \eqref{4.8}, we obtain after
some simple computations
\begin{multline}\label{4.16}
\sup_{t\in(0,T)}\int_{\Omega(s+\delta)}u^{\alpha+1}\,dx+D_1
\int_{Q_{T}(s+\delta)}
\Bigl[\bigl(u^{\frac{\alpha+n+1}2}\bigr)_{xx}^2+\bigl(u^{\frac{\alpha+n+1}4}\bigr)_x^4\Bigr]\,dx\,dt\\
\leq D_2 \Biggl[\frac1{\delta^4}\int_{Q_T(s)\setminus
Q_T(s+\delta)}
u^{\alpha+n+1}\,dx\,dt+\frac1{\delta^2}\int_{Q_T(s)\setminus
Q_T(s+\delta)}
u^{\alpha+m+1}\,dx\,dt+\\\int_{Q_T(s)}u^{\alpha+2m-n+1}\,dx\,dt\Biggr]+\int_{\Omega(s)}u_0^{1+\alpha}dx.
\end{multline}
Here and throughout the proof, $D_i$  denote  positive constants
which can depend on the problem  parameters, $\alpha,$ $n,$ $m,$
but not
 on $s,$ $\delta,$ and $T$. For arbitrary $\beta>0,\ l>4,\ s>0,
\delta>0,$ such that $s+2\delta<a,$ we obtain for $\sigma=\beta+\alpha+1$ that
\begin{equation}\label{4.17}
\int_{\Omega(s+2\delta)}u^{\sigma}dx\leq\int_{\Omega(s+\delta)} (u
\, \zeta_{s+\delta,\delta}^l)^{\sigma}dx=\int_{\Omega(s+\delta)}
v^{\frac{4 \sigma}{n+\alpha+1}}dx,
\end{equation}
where
$v=v(x,t)=(u\,\zeta_{s+\delta,\delta}^l)^{\frac{n+\alpha+1}4}$. By
the Gagliardo--Nirenberg interpolation inequality,
\begin{equation}\label{4.18}
\int_{\Omega(s+\delta)}v^{\frac{4\sigma}{n+\alpha+1}}dx\leq D_3
\Biggl[\int_{\Omega(s+\delta)}|v_x|^4dx\Biggr]^{\frac{\theta\sigma}{n+\alpha+1}}
\Biggl[\int_{\Omega(s+\delta)}
v^{\frac{4(\alpha+1)}{n+\alpha+1}}dx\Biggr]^{\frac{(1-\theta)\sigma}{\alpha+1}}
\end{equation}
where $\theta=\frac{\beta(n+\alpha+1)}{\sigma(n+4(\alpha+1))}$.
Combining \eqref{4.17} and \eqref{4.18},
\begin{multline}\label{4.19}
\int_{\Omega(s+2\delta)}u^{\beta+\alpha+1}dx\leq D_4
\Biggl[\int_{\Omega(s+\delta)}u^{\alpha+1}dx\Biggr]^{\frac{(1-\theta)\sigma}{\alpha+1}}\\\times
\Biggl[\int_{\Omega(s+\delta)}\bigl|\bigl(u^{\frac{n+\alpha+1}4}\bigr)_x\bigr|^4dx
+\frac1{\delta^4}\int_{\Omega(s+\delta)\setminus\Omega(s+2\delta)}
u^{n+\alpha+1}dx\Biggr]^{\frac{\theta\sigma}{n+\alpha+1}}.
\end{multline}
Let us suppose that
$$\frac{\theta\sigma}{n+\alpha+1}<1.$$
or, equivalently, {that
\begin{equation}\label{4.20}
\beta<n+4(\alpha+1).
\end{equation}}
 Then,
integrating \eqref{4.19} with respect to  $t$ and using H\"older's
inequality,
\begin{multline}\label{4.21}
\int_{Q_{T}(s+2\delta)} u^{\beta+\alpha+1}\,dx\,dt\leq D_5
T^{1-\frac{\beta}{n+4(\alpha+1)}}\sup_{t\in(0,T)}\Biggl(\int_{\Omega(s+\delta)}
u^{\alpha+1}(t)\,dx\Biggr)^{\frac{(1-\theta)(\beta+\alpha+1)}{\alpha+1}}\\
\times\Biggl[\frac1{\delta^4}\int_{Q_{T}(s+\delta)\setminus
Q_{T}(s+2\delta)}u^{n+\alpha+1}\,dx\,dt+\int_{Q_{T}(s+\delta)}
\bigl|\bigl(u^{\frac{n+\alpha+1}4}\bigr)_x\bigr|^4\,dx\,dt
\Biggr]^{\frac\beta{n+4(\alpha+1)}}\\
\\\leq
D_6
T^{1-\frac\beta{n+4(\alpha+1)}}\Biggl[\sup_{t\in(0,T)}\int_{\Omega(s+\delta)}
u^{\alpha+1}(t)\,dx+\frac1{\delta^4}\int_{Q_{T}(s+\delta)\setminus
Q_{T}(s+2\delta)} u^{n+\alpha+1}\,dx\,dt\\
+\int_{Q_{T}(s+\delta)}\bigl|\bigl(u^{\frac{n+\alpha+1}4}\bigr)_x\bigr|^4\,dx\,dt\Biggr]^{1+\mu},
\end{multline}
where $\mu=\frac{4\beta}{n+4(\alpha+1)}>0$.

Using the definitions in  \eqref{4.15} and the a priori estimate
\eqref{4.16}, we deduce from  \eqref{4.21} that
\begin{multline}\label{4.22}
\int_{Q_{T}(s+2\delta)} u^{\beta+\alpha+1} \, dx\, dt\leq D_7
T^{1-\frac\mu{4}}\Bigl[\frac{J_T(s)-J_T(s+2\delta)}{\delta^4}+\\
\frac{E_T(s)-E_T(s+\delta)}{\delta^2}+I_T(s)+\int_{\Omega(s)}u_0^{\alpha+1}\,dx\Bigr]^{1+\mu}.
\end{multline}
The inequality \eqref{4.22} holds for the three values of
$\beta_i,\ i=1,2,3$, prescribed in \eqref{4.15}, if  condition
\eqref{4.20} holds with $\beta=\beta_i,\ i=1,2,3.$ These
conditions may be written as
$$
1)\ \alpha+1>0, \quad 2)\ m<n+4(\alpha+1),\quad 3)\
2m-n<n+4(\alpha+1).
$$
It is easy to check that all of these conditions are satisfied for
some $\alpha\in(\alpha_2,2-n)$ if and only if
\begin{equation}\label{4.25}
m<6-n.
\end{equation}

Thus, if {the} inequality \eqref{4.25} holds, we obtain the following
system of functional inequalities:
\begin{equation}\label{4.26}
\begin{aligned}
 J_T(s+\delta)\leq D_8 T^{\frac{4-
\mu_1}{4}}\Bigl[\frac{J_T(s)-J_T(s+\delta)}{\delta^4}+
&\frac{E_T(s)-E_T(s+\delta)}{\delta^2}+\\&I_T(s)+h_0(s) \Bigr]^{1+\mu_1}, \\
 E_T(s+\delta)\leq D_9 T^{\frac{4-\mu_2}{4}}\Bigl[
\frac{J_T(s)-J_T(s+\delta)}{\delta^4}+&\frac{E_T(s)-E_T(s+\delta)}{\delta^2}+\\&
I_T(s)+h_0(s)\Bigr]^{1+\mu_2}, \\
 I_T(s+\delta)\leq
D_{10}
T^{\frac{4-\mu_3}{4}}\Bigl[\frac{J_T(s)-J_T(s+\delta)}{\delta^4}+
&\frac{E_T(s)-E_T(s+\delta)}{\delta^2}+\\&I_T(s)+h_0(s) \Bigr]^{1+\mu_3}, \\
\end{aligned}
\end{equation}
where $h_0(s)=\int_{\Omega(s)}u_0^{1+\alpha}\,dx$, and
$$\mu_1=\frac{4n}{n+4(\alpha+1)},\
\mu_2=\frac{4m}{n+4(\alpha+1)},\
\mu_3=\frac{4(2m-n)}{n+4(\alpha+1)}.$$ Due to the boundedness and
nonnegativity of $u,$ the following estimates are obvious
\begin{equation}\label{4.27}
\begin{aligned}
& J_T(0)\leq J_T:=\int_{Q_T}u^{\beta_1+\alpha+1}\,dx\,dt<cT,\quad\forall\,T>0, \\
& E_T(0)\leq E_T:=\int_{Q_T}u^{\beta_2+\alpha+1}\,dx\,dt<cT,\quad\forall\,T>0, \\
& I_T(0)\leq
I_T:=\int_{Q_T}u^{\beta_3+\alpha+1}\,dx\,dt<cT,\quad\forall\,T>0,
\end{aligned}
\end{equation}
where $c$ is a constant which  does not depend on $T$.  The
validity of the statement of Theorem \ref{t:4.1} when inequality
\eqref{4.25} holds, now follows from \eqref{4.26}, \eqref{4.27},
and {Lemma  6.2}, since $h_0(s)=0$ for any $s>0$.

If $m\geq 6-n,$ we proceed as follows.  {We fix}  $\overline{m}$ such that $
\frac n2<\overline{m}<6-n$. It is easy to see that due to the
boundedness of the solution $u,$ all of the previous estimates in
the proof of Theorem \ref{t:4.1}  remain {valid} when $m$ is
replaced by $\overline{m}$. As {a} result,   the system \eqref{4.26} is
obtained with respect to new energy functions \eqref{4.15} defined
by the values $$\beta_1=n,\ \beta_2=\overline{m},\ \beta_3=2\,\overline{m}-m,$$
and where
$$
\mu_1=\frac{4n}{n+4(\alpha+1)},\quad \mu_2=\frac{4\,\overline{m}}{n+4(\alpha+1)},
\quad \mu_3=\frac{4(2\,\overline{m}-n)}{n+4(\alpha+1)}.
$$
 In this manner, the
validity of the statement of the theorem for the case $m \ge 6-n$
again follows from (\ref{4.26}) and {Lemma  6.2}.
\end{proof}

\section{Finite speed propagation (weak slippage: $2\leq n<3$)}

\setcounter{equation}{0}

In this section we shall again  consider  problem $(\mathbb P),$ with
initial data, $u_0$, which satisfies \eqref{ic} as well as the
additional property, \eqref{4.4}. The subdomains, $\Omega(s)$ and
$Q_{t}(s),$ {are to} be understood here to be as defined in \eqref{4.5}.

We first prove the following lemma, which provides control on the
$L_{loc}^1(\Omega)$ norm of some minimal positive power of the
solution under consideration, $u(x,t)$. For the sake of simplicity,
the results in this section are proven for  $\nu=1$ and $A=0,$
though they remain valid for $\nu=-1$ and $A>0$ as well. The results
here can also be readily shown to apply to (\ref{gthermoc2}) if
$2<n<3$ and to (\ref{attractive}\,i).

\begin{Lemma}\label{l:5.1}
Let $\nu=\pm 1,$ $0\le A,$  $1/2< n<3,$ $m>n/2,$ $\eta>\frac{1-n}3,$
$\varepsilon>0,$ with $m<M$ if $A>0$ and $m<n+2$ if $\nu=1,$ $A=0$.
Then there exists a positive constant $c$, depending on
$n,m,\eta,\varepsilon$ only, such that any nonnegative strong
solution $u$ of problem $(\mathbb P)$ satisfies
\begin{multline}\label{5.1}
\int_\Omega u(x,T)^{\eta+1}\zeta^4\, dx \leq \varepsilon\Biggl(\int_{Q_T
\cap \{u>0\}}\zeta^6u^nu_{xxx}^2 \, dx\, dt
+\int_{Q_{T}}|\zeta_x|^6u^{n+2}\, dx\,dt \Biggr)+\\c\Biggl(\int_{Q_T}\bigl[
u^{n+2\eta}|\zeta_x|^2+u^{\frac{3m+3\eta+1-n}2}\zeta^3
+u^{m+\eta+1}|\zeta\zeta_x|^2\bigr] \, dx\,dt \Biggr)+\\
\int_\Omega u_0^{\eta+1}\zeta^4 \, dx
+c\int_{Q_T\cap\supp\zeta}u^{n+3\eta-1}\, dx\,dt,
\end{multline}
for arbitrary nonnegative $\zeta\in C^2([-a,a])$.
\end{Lemma}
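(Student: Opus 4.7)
The plan is to test the approximating problem $(\mathbb{P}_\epsilon)$ with the multiplier $\zeta^4 u_\epsilon^\eta$ and integrate over $Q_T$. The time derivative contributes
\[
\frac{1}{\eta+1}\int_\Omega \zeta^4\bigl(u_\epsilon^{\eta+1}(x,T) - u_{0\epsilon}^{\eta+1}\bigr)\, dx,
\]
while one integration by parts (boundary terms vanish since $\supp\zeta\subset\subset\Omega$) transfers an $x$-derivative onto $\zeta^4 u^\eta$. Using $(\zeta^4 u^\eta)_x = 4\zeta^3\zeta_x u^\eta + \eta\zeta^4 u^{\eta-1}u_x$, the flux contribution splits into six integrals: three ``outer'' ones with prefactor $\zeta^3\zeta_x$ and three ``inner'' ones with prefactor $\eta\zeta^4$, corresponding to the three pieces $u^n u_{xxx}$, $\nu u^m u_x$, and $-A u^M u_x$. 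Each is estimated to produce exactly the terms on the right-hand side of (5.1), after which $\epsilon\to0$ is taken using the strong convergence $u_\epsilon\to u$ on $\{u>0\}$.

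The outer $u_{xxx}$-term $\int\zeta^3\zeta_x u^{n+\eta}u_{xxx}$ is split by Cauchy--Schwarz on the pair $(\zeta^3 u^{n/2}u_{xxx})(\zeta_x u^{n/2+\eta})$, giving a small multiple of $\int\zeta^6 u^n u_{xxx}^2$ plus $C\int|\zeta_x|^2 u^{n+2\eta}$. The outer $u_x$-term $\int\zeta^3\zeta_x u^{m+\eta}u_x$, grouped as $(\zeta\zeta_x u^{(m+\eta+1)/2})(\zeta^2 u^{(m+\eta-1)/2}u_x)$, yields the target $\int|\zeta\zeta_x|^2 u^{m+\eta+1}$ plus a residual $\int\zeta^4 u^{m+\eta-1}u_x^2$. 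The $M$-analogues are handled by the $L^\infty$-bound on $u$ (from the basic energy estimate and $H^1\hookrightarrow L^\infty$ in 1D) combined with $M>m$, reducing them to already-treated forms. The inner terms $\nu\eta\int\zeta^4 u^{m+\eta-1}u_x^2$ and $-A\eta\int\zeta^4 u^{M+\eta-1}u_x^2$ are either of favorable sign (moved to the left and discarded) or of the same shape as the residual just obtained.

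The main obstacle is the inner mixed-gradient term $\eta\int\zeta^4 u^{n+\eta-1}u_x u_{xxx}$. Writing it as $(\zeta^3 u^{n/2}u_{xxx})(\zeta u^{(n+2\eta-2)/2}u_x)$ and applying Cauchy--Schwarz extracts a small multiple of $\int\zeta^6 u^n u_{xxx}^2$ plus $C\int\zeta^2 u^{n+2\eta-2}u_x^2 = C'\int\zeta^2((u^{(n+2\eta)/2})_x)^2$. The key algebraic identity is
\[
(u^{(n+2\eta)/2})_x = \frac{3(n+2\eta)}{n+2}\, u^{(n+3\eta-1)/3}\,(u^{(n+2)/6})_x,
\]
which expresses this middle-exponent gradient in terms of the Bernis gradient $(u^{(n+2)/6})_x$. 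Hölder's inequality with exponents $(3,3/2)$ then gives
\[
\int\zeta^2 u^{2(n+3\eta-1)/3}((u^{(n+2)/6})_x)^2 \le \Bigl(\int\zeta^6((u^{(n+2)/6})_x)^6\Bigr)^{1/3}\Bigl(\int_{\supp\zeta}u^{n+3\eta-1}\Bigr)^{2/3},
\]
and Young's inequality combined with the generalized Bernis inequality (Lemma 6.3), which controls $\int\zeta^6((u^{(n+2)/6})_x)^6$ by $\int\zeta^6 u^n u_{xxx}^2$ and $\int|\zeta_x|^6 u^{n+2}$, produces exactly the first, second, and last terms on the right-hand side of (5.1). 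The hypothesis $\eta>(1-n)/3$ is precisely what guarantees that $n+3\eta-1>0$, making the final integral finite.

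The residual integral $\int\zeta^4 u^{m+\eta-1}u_x^2$ (arising from both the outer low-order term and the inner $u_x^2$-term) is treated by the same Hölder--Bernis scheme, now using the companion identity
\[
(u^{(m+\eta+1)/2})_x = \frac{3(m+\eta+1)}{n+2}\, u^{(3m+3\eta+1-n)/6}\,(u^{(n+2)/6})_x,
\]
so that Hölder $(3,3/2)$ together with Young's and Bernis' inequalities produces the target term $\int\zeta^3 u^{(3m+3\eta+1-n)/2}$ of (5.1); the constraint $m>n/2$ together with $\eta>(1-n)/3$ makes $(3m+3\eta+1-n)/2$ positive. The limit $\epsilon\to0$ is then carried out by splitting $Q_T$ into $\{u>\eta_0\}$ (where $u_\epsilon\to u$ in $C^{4,1}_{\mathrm{loc}}$) and $\{u<\eta_0\}$ (where each integrand is controlled by a power of $\eta_0$ vanishing as $\eta_0\to0$, as in the closing argument of Theorem \ref{Th.le}), taking $\eta_0\to0$ after $\epsilon\to0$.
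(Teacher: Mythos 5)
Your algebraic core is sound and is essentially the paper's own computation: the splittings $(\zeta^3u^{n/2}u_{xxx})(\zeta_xu^{n/2+\eta})$, the three-factor treatment of the mixed term with leftover power $u^{(n+3\eta-1)/3}$, the Young step producing $u^{\frac{3m+3\eta+1-n}{2}}\zeta^3$, and the use of the generalized Bernis inequality (Lemma \ref{L.6.3}) reproduce exactly the estimates of $A_2$, $A_3$, $A_1^{(1)}$, $A_1^{(2)}$ in the paper. The genuine divergence is where the identity is derived. The paper does \emph{not} go back to $(\mathbb{P}_\epsilon)$: it tests the equation satisfied by the limit strong solution $u$ itself, via \eqref{weak}, with $\varphi=-l_\delta\zeta^4(u+\gamma)^\eta$ (time cutoff $l_\delta$, shift $\gamma>0$ to tame $u^{\eta-1}$ at $\{u=0\}$), and only $\delta\to0$, $\gamma\to0$ are removed; the regularity already secured in Theorems \ref{Case3a}--\ref{Th.le} makes every integral finite. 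You instead test $(\mathbb{P}_\epsilon)$ with $\zeta^4u_\epsilon^\eta$, estimate, and then let $\epsilon\to0$, and that final limit is where your argument has a real gap.

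Concretely: at the $\epsilon$-level the mobility is $f_\epsilon(u_\epsilon)\le u_\epsilon^n$, so your Cauchy--Schwarz steps put $\varepsilon\int_{Q_T}\zeta^6f_\epsilon(u_\epsilon)u_{\epsilon xxx}^2\,dx\,dt$ on the right-hand side, and the Bernis lemma applied to $u_\epsilon$ produces $\int\zeta^6u_\epsilon^nu_{\epsilon xxx}^2\,dx\,dt$, which dominates the $f_\epsilon$-weighted quantity and is not even known to be bounded uniformly in $\epsilon$ (only $\int f_\epsilon(u_\epsilon)u_{\epsilon xxx}^2\le d_8$ is available, cf. \eqref{59}). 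Since these quadratic terms sit on the \emph{right}, recovering \eqref{5.1} would require $\limsup_{\epsilon\to0}\int\zeta^6f_\epsilon(u_\epsilon)u_{\epsilon xxx}^2\,dx\,dt\le\int_{Q_T\cap\{u>0\}}\zeta^6u^nu_{xxx}^2\,dx\,dt$, i.e.\ upper semicontinuity with no concentration on $\{u=0\}$; weak convergence yields only the reverse (lower semicontinuity) inequality. The $\{u>\eta_0\}/\{u<\eta_0\}$ splitting you borrow from Theorem \ref{Th.le} does not rescue this: there it is applied to \emph{cross} terms that leave a positive power of $u_\epsilon$ on $\{u<\eta_0\}$, whereas the pure $u_{\epsilon xxx}^2$ term has no such smallness, and in Theorem \ref{Th.le} that term appears on the \emph{left}, where the $\{u<\eta_0\}$ portion can simply be dropped. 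Nor can you settle for the crude bound $\varepsilon d_8$: the absorption into the left side of \eqref{5.8} and the Stampacchia argument in Theorem \ref{t:5.1} need precisely the localized quantity $\varepsilon\int_{Q_T\cap\{u>0\}}\zeta^6u^nu_{xxx}^2\,dx\,dt$, which vanishes where it must, while a global constant would destroy the finite-speed conclusion. The repair is the paper's route: exploit the strong-solution regularity and test the limit equation directly with $-l_\delta\zeta^4(u+\gamma)^\eta$, removing $\delta$ and $\gamma$ at the end.
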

\begin{proof}
The proof  here follows that of  Lemma 5.2 in \cite{GS}. Let us
{set $\phi=\varphi$ in} the integral identity in \eqref{weak}, {where $\varphi$ is the} test
function
$$
\varphi=-l_\delta\zeta^4(u+\gamma)^\eta,\quad \gamma>0,
$$
where $\{l_\delta(t)\}\subset C_c^\infty(0,T)$ and
$l_\delta\to\chi_{(0,T)}$ as $\delta\to 0$. After some simple
computations, we obtain
\begin{multline}\label{5.1*}
-\int_{Q_T}(l_\delta)_t\zeta^4\frac{(u+\gamma)^{\eta+1}}{\eta+1} \, dx\,dt =\int_{Q_T}
u^mu_xl_\delta((u+\gamma)^\eta\zeta^4)_x \,dx\,dt +\\
\int_{Q_T\cap\{u>0\}}l_\delta(\zeta^4)_xu^n(u+\gamma)^\eta u_{xxx}\, dx\,dt +\\
\eta\int_{Q_T\cap\{u>0\}}l_\delta\zeta^4u^n(u+\gamma)^{\eta-1}u_xu_{xxx}\, dx\,dt:=A_1+A_2+A_3.
\end{multline}
The terms $A_2,A_3$ {can be} estimated  as in \cite[Lemma 5.2]{GS}. For
any $\epsilon >0,$
$$
|A_2|\leq\epsilon\int_{Q_T\cap\{u>0\}}\zeta^6u^nu_{xxx}^2 \, dx\,dt +C_1(\epsilon)\int_{Q_T}|\zeta_x|^2
u^n(u+\gamma)^{2 \eta}\, dx\,dt,
$$
\begin{multline*}
|A_3|\leq\epsilon\bigg(\int_{Q_T\cap\{u>0\}}\zeta^6u^nu_{xxx}^2\, dx\,dt +\int_{Q_T}|\zeta_x|^6u^{n+2}\, dx\,dt \bigg)
+\\C_2(\epsilon)\int_{Q_T\cap \supp\zeta}(u+\gamma)^{n+3\eta-1}\, dx\,dt.
\end{multline*}
Here $C_i$ denote constants which may depend on $m,$ $n,$ $\eta,$
and on $\epsilon$ if indicated, but which are independent of
$\gamma$ and $\delta$.

Let us now estimate $A_1$.
\begin{multline}\label{5.2}
A_1=\int_{Q_T} \eta u^m(u+\gamma)^{\eta-1}u_x^2\zeta^4l_\delta\, dx\,dt+\\
\int_{Q_T}  4u^m(u+\gamma)^\eta u_x\zeta^3\zeta_xl_\delta \, dx\,dt
:=A^{(1)}_1+A^{(2)}_1.
\end{multline}
Since $m-\frac{n-4}3+\eta-1>0,$ it follows from Young's inequality
and {Lemma 6.4} that for any $\epsilon>0,$
\begin{multline}\label{5.3}
|A^{(1)}_1|\leq
\epsilon\int_{Q_T\cap\{u>0\}}u_x^6u^{n-4}\zeta^6 \, dx\,dt +C_3(\epsilon)
\int_{Q_T}(u+\gamma)^{\frac32(m-\frac{n-4}3+\eta-1)}\zeta^3\, dx\,dt \leq\\
\epsilon\Biggl( \int_{Q_T\cap\{u>0\}}\zeta^6u^nu_{xxx}^2 \, dx\,dt +C_4
\int_{Q_T}\zeta_x^6u^{n+2} \, dx\,dt \Biggr)+\\
C_3(\epsilon)
\int_{Q_T}(u+\gamma)^{\frac{3m+3\eta+1-n}2}\zeta^3\, dx\,dt.
\end{multline}
With regard to $A^{(2)}_1,$ we have by Young's inequality
\begin{equation}\label{5.4}
|A^{(2)}_1|\leq
|A^{(1)}_1|+C_{5}\int_{Q_T}(u+\gamma)^{m+\eta+1}|\zeta\zeta_x|^2 \, dx\,dt.
\end{equation}

 Thus all the integrals in \eqref{5.1*} are uniformly bounded
with respect to {the} parameters $\delta>0,\gamma>0$. Therefore,
collecting the  estimates obtained for the terms $A_i, i=1,\dots,3,$
passing to  the limit $\delta\to0,$ and then to the limit
$\gamma\to0$, the estimate \eqref{5.1} follows.
\end{proof}
\begin{Theorem}\label{t:5.1}
Let  $u_0$ satisfy \eqref{ic}, \eqref{4.4},  let $\nu=\pm 1,$ $0
\le A,$ $1/2<n<3,$ $m > n/2,$ $m<M$ if $A>0,$ and $m<n+2$ if
$\nu=1,$ $A=0,$ and let $u$ denote an arbitrary strong nonnegative
solution of problem $(\mathbb P)$ obtained as in Theorem 1, which
satisfies the local entropy estimate in Theorem 2 or 3. Then $u$
possesses the finite speed of propagation property in the sense of
Theorem \ref{t:4.1}.
\end{Theorem}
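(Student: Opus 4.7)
The plan is to adapt the Stampacchia scheme from the proof of Theorem~\ref{t:4.1}, replacing the local entropy estimate (used there with positive $\alpha$) by a combination of three ingredients adapted to the weak-slippage regime: the local energy estimate \eqref{62} of Theorem~\ref{Th.le} (applicable since the hypothesis $m>n/2$ easily implies the looser parameter constraints of Theorem~\ref{Th.le}), the local entropy estimates of Theorems~\ref{Case3a} and \ref{Case3b} with $-1<\alpha<0$ already used in the derivation of \eqref{62}, and the auxiliary low-power inequality \eqref{5.1} of Lemma~\ref{l:5.1}, applied with a fixed $\eta\in((1-n)/3,\,0)$ (the interval is non-empty since $n>1/2$), so that $\eta+1<1$.

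First, I would specialise $\zeta=\zeta_{s,\delta}$ from \eqref{4.13} in both \eqref{62} and \eqref{5.1}, with $s\ge 0$, so that the initial-data contributions vanish because of \eqref{4.4}. The mixed term $\int_{Q_T\cap\{u>0\}}(\nu u^m u_x - A u^M u_x)(u_x\zeta^6)_{xx}\,dx\,dt$ on the right of \eqref{62} is controlled by Young's inequality: a small fraction of $\int\zeta^6 u^n u_{xxx}^2$ and of $\int\zeta^6 (u^{(n+2)/6})_x^6$ is absorbed on the left, and the remainder becomes a weighted integral of $u^{m+2}$ and $u^{M+2}$ on the annular strip $\Omega(s)\setminus\Omega(s+\delta)$. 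Likewise, the $\varepsilon$-term on the right of \eqref{5.1} is absorbed into the left-hand side of \eqref{62}. The outcome is a uniform bound on $\sup_{t\in(0,T)}\int_{\Omega(s+\delta)}u^{\eta+1}\,dx$ and on $\int_{Q_T(s+\delta)}(u^{(n+2)/6})_x^6\,dx\,dt$ by an annular remainder of the form $\delta^{-6}\int_{Q_T(s)\setminus Q_T(s+\delta)}u^{n+2}\,dx\,dt$, together with integrals on $Q_T(s)$ of $u$ raised to powers $n+2$, $m+\eta+1$, $(3m+3\eta+1-n)/2$ and $n+3\eta-1$, each of which strictly exceeds $\eta+1$ precisely because $m>n/2$; this provides the crucial exponent gap required by the Stampacchia scheme.

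Next, mimicking \eqref{4.17}--\eqref{4.22}, I would introduce the three energy functions $J_T(s):=\int_{Q_T(s)}u^{n+2}\,dx\,dt$, $E_T(s):=\int_{Q_T(s)}u^{m+\eta+1}\,dx\,dt$ and $I_T(s):=\int_{Q_T(s)}u^{(3m+3\eta+1-n)/2}\,dx\,dt$, and bound each $L^\beta_x$-norm of $u$ via Gagliardo--Nirenberg with $v:=(u\,\zeta_{s+\delta,\delta}^l)^{(n+2)/6}$, interpolating between the $L^6_x$-norm of $v_x$ (controlled by step one via \eqref{62}) and the $L^{\eta+1}_x$-norm of $u$ (controlled by Lemma~\ref{l:5.1}), for $\beta\in\{n+2,\,m+\eta+1,\,(3m+3\eta+1-n)/2\}$. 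A time-H\"older argument would then yield a three-component system of functional inequalities of exactly the form \eqref{4.26}, with $\delta^{-4}$ replaced by $\delta^{-6}$ (reflecting the six cut-off derivatives in \eqref{62}) and positive exponent-gaps $\mu_1,\mu_2,\mu_3$. The $L^\infty$-bound on $u$ supplies $J_T(0),E_T(0),I_T(0)\le cT$ as in \eqref{4.27}, and $h_0(s)\equiv 0$ for $s>0$ by \eqref{4.4}, so Lemma~6.2 would produce some $s_0(T)\in(0,a)$ with $J_T(s_0)=E_T(s_0)=I_T(s_0)=0$ and $s_0(T)\to 0$ as $T\to 0$, whence the continuous advancing free boundary $s(t)$ of \eqref{4.7} follows exactly as in the proof of Theorem~\ref{t:4.1}.

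The main technical obstacle will be the Gagliardo--Nirenberg step: one is interpolating between the very low exponent $\eta+1<1$ supplied by Lemma~\ref{l:5.1} and the very high-order derivative norm in $L^6_{t,x}$ coming from \eqref{62}, and one must verify that the resulting interpolation parameter $\theta_\beta$ satisfies $\theta_\beta\beta/(n+2)<1$ simultaneously for each of the three values of $\beta$ above. This reduces to an explicit algebraic condition that is valid precisely when $m>n/2$ (together with the already available $m>n-2$), reflecting the sharpness of the assumption flagged in the introduction. A secondary bookkeeping issue will be to match every higher-order remainder produced in step one to exactly one of $J_T$, $E_T$, $I_T$, so that the right-hand side of the system fits the hypotheses of Lemma~6.2.
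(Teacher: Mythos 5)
Your overall strategy coincides with the paper's: specialize the local energy estimate \eqref{62} and the low-power estimate \eqref{5.1} of Lemma \ref{l:5.1} to the cut-offs $\zeta_{s,\delta}$ of \eqref{4.13}, absorb the critical terms by Young's inequality, interpolate by Gagliardo--Nirenberg between the high-order quantity controlled on the left and the low power $u^{\eta+1}$, and close with the Stampacchia lemma for systems (Lemma 6.2), using \eqref{4.4} to kill the initial-data terms. However, two of your concrete steps are wrong, and they are exactly the steps where the hypothesis $m>n/2$ has to do its work. First, the Young estimate of the mixed term $\int_{Q_T\cap\{u>0\}}u^m u_x(u_x\zeta^6)_{xx}$ does not produce ``weighted integrals of $u^{m+2}$ and $u^{M+2}$ on the annular strip.'' Splitting $u^mu_xu_{xxx}\zeta^6$ against the absorbable quantities $u^nu_{xxx}^2\zeta^6$ and $u^{n-4}u_x^6\zeta^6$ (with exponents $\tfrac12,\tfrac16,\tfrac13$) leaves the power $u^{3m-2n+2}$ weighted by the full $\zeta^6$, hence an integral over all of $Q_T(s)$, and the pieces carrying cut-off derivatives contribute $u^{2m-n+2}$ and $u^{\frac{3m-n+4}{2}}$, as in \eqref{5.8}. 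The exponent $3m-2n+2$ is what imposes the upper constraint $\eta<3m-2n+1$ on the admissible $\eta$; with your powers $m+2$, $M+2$ the scheme would appear to close under a condition substantially weaker than $m>n/2$, which is inconsistent with your own claim that the exponent gap holds ``precisely because $m>n/2$'' and signals the bookkeeping error.

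Second, fixing $\eta\in\bigl(\tfrac{1-n}{3},0\bigr)$ is not admissible. The Stampacchia exponent gap must hold in particular for the term $\int_{Q_T\cap\supp\zeta}u^{n+3\eta-1}$ coming from \eqref{5.1}, i.e. $n+3\eta-1>1+\eta$, which forces $\eta>1-\tfrac n2$; this threshold is strictly larger than $\tfrac{1-n}{3}$ and is nonnegative whenever $n\le 2$ (and the theorem covers $\tfrac12<n<3$), so a negative $\eta$ is excluded on part of the range. The correct requirement is that $\eta$ lie in the window \eqref{5.14}, $1-\tfrac n2<\eta<1-\tfrac n2+\tfrac32(2m-n)$, whose nonemptiness is exactly equivalent to $m>\tfrac n2$ (\eqref{5.15}); the hypothesis enters here, not primarily through the Gagliardo--Nirenberg parameter condition as you suggest (that condition, \eqref{5.18}, is a separate constraint which is then satisfiable for such $\eta$). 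Finally, a smaller point: the paper interpolates in \eqref{5.17} against the third-derivative term $\int\zeta^6|(u^{\frac{n+2}{2}})_{xxx}|^2$ retained in \eqref{5.10}, not against the $L^6$-norm of $(u^{\frac{n+2}{6}})_x$; your variant may be workable, but then the admissibility condition $\theta_i\xi_i/(n+2)<1$ must be re-derived for the first-order functional, and this verification is missing. As it stands, the proposal has the right architecture but a genuine gap in the parameter bookkeeping that the hypothesis $m>n/2$ is meant to resolve.
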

\begin{proof}
Let us consider the  local energy estimate \eqref{62} obtained in
Theorem \ref{Th.le}, and estimate the third term on the right hand
side, setting $\nu=1$ for simplicity,
\begin{multline*}
B:=\int_{Q_T\cap\{u>0\}}u^mu_x(u_x\zeta^6)_{xx} \, dx\,dt= \\ \int_{Q_T\cap\{u>0\}}u^mu_xu_{xxx}\zeta^6 \, dx\,dt
+12\int_{Q_T\cap\{u>0\}}u^mu_xu_{xx}\zeta^5\zeta_x \,dx\,dt
+ \\
6\int_{Q_T\cap\{u>0\}}u^mu_x^2(\zeta^5\zeta_x)_x \, dx\,dt:=B_1+B_2+B_3.
\end{multline*}
Using Young's inequality, we obtain that for any $\epsilon>0,$
\begin{multline*}
|B_1|\leq\bigg|\int_{Q_T\cap\{u>0\}} u^{\frac n2}u_{xxx}u_x u^{\frac{(n-4)}6}u^{m-\frac
n2-\frac{n-4}6}\zeta^6 \, dx\,dt\bigg| \leq
\\ \epsilon\int_{Q_T\cap\{u>0\}}
(u^nu_{xxx}^2+u_x^6u^{n-4})\zeta^6 \, dx\,dt+ D_{1}(\epsilon)\int_{Q_T}
u^{3m-2n+2}\zeta^6\, dx\,dt.
\end{multline*}
Here and in the sequel $D_i$ denote constants which may depend on
$m,$ $n,$ $\eta,$ and on $\epsilon$ if indicated, but which are
independent of $\delta$ and $s$. Similarly, we may estimate
\begin{align*}
& B_2\leq\epsilon\int_{Q_T\cap\{u>0\}}
(u_{xx}^3u^{n-1}+u_x^6u^{n-4})\zeta^6 \, dx\,dt +D_{2}(\epsilon)\int_{Q_T}
u^{2m-n+2}|\zeta^2\zeta_x|^2 \, dx\,dt,
\\
& B_3\leq \epsilon\int_{Q_T\cap\{u>0\}}
u_x^6u^{n-4}\zeta^6 \, dx\,dt +D_{3}(\epsilon)\int_{Q_T}
u^{\frac{3m-n+4}2}|(\zeta^3\zeta_x)_x|^{\frac32}\, dx\,dt.
\end{align*}

Using these estimates in \eqref{62}, we obtain that for
$\epsilon>0$ sufficiently small
\begin{multline}\label{5.8}
\int_\Omega
|u_x(x,T)|^2\zeta^6\, dx+\frac{d_{10}}{2}\int_{Q_T}\zeta^6[(u^{\frac{n+2}6})^6_x +(u^{\frac{n+2}3})_{xx}^3+
(u^{\frac{n+2}2})_{xxx}^2]\, dx\,dt+\\ \frac{d_{10}}{2}\int_{Q_T\cap\{u>0\}}\zeta^6u^nu_{xxx}^2\, dx\,dt
\leq \\ \int_\Omega |u_{0x}|^2\zeta^6 \, dx
+ d_{9}\int_{Q_T}(|\zeta_x|^6+|\zeta\zeta_{xx}|^3)|u|^{n+2}\, dx\,dt +\\
D_{4}\int_{Q_T}[u^{3m-2n+2}\zeta^6 +u^{2m-n+2}|\zeta^2\zeta_x|^2+
u^{\frac{3m-n+4}2}|(\zeta^3\zeta_x)_x|^{\frac32}]\, dx\,dt.
\end{multline}
Assuming that $\eta> \frac{1-n}{3},$  summing the inequalities
\eqref{5.8}, \eqref{5.1}, and taking  $\epsilon>0$ to be
sufficiently small,
 we obtain:
\begin{multline}\label{5.9}
\int_\Omega |u(x,T)|^{\eta+1}\zeta^4 \, dx+\int_\Omega
|u_x(x,T)|^2\zeta^6 \, dx+
\frac{d_{10}}{4}\int_{Q_T\cap\{u>0\}}\zeta^6|(u^{\frac{n+2}2})_{xxx}|^2\, dx\,dt\\\leq
\int_\Omega|u_{0x}|^2\zeta^6 \, dx+\int_\Omega |u_0|^{\eta+1}\zeta^4\, dx+
D_{5}R,\\R:=
\int_{Q_T}u^{3m-2n+2}\zeta^6\, dx\,dt+\int_{Q_T}u^{\frac{3m+3\eta+1-n}2}\zeta^3\, dx\, dt+\\
\int_{Q_T\cap\supp\zeta}u^{n+3\eta-1}\, dx\,dt+
\int_{Q_T}(|\zeta_x|^6+|\zeta\zeta_{xx}|^3)u^{n+2}\, dx\,dt+\\ \int_{Q_T}
u^{2m-n+2}|\zeta^2\zeta_x|^2\, dx\,dt+
\int_{Q_T}u^{n+2\eta}|\zeta_x|^2\, dx\,dt+\\
\int_{Q_T}u^{m+\eta+1}|\zeta
\zeta_x|^2 \, dx\,dt+
\int_{Q_T}
u^{\frac{3m-n+4}2}|(\zeta^3\zeta_x)_x|^{\frac32}\, dx\,dt.
\end{multline}
Let us take  $\zeta(x)$ as the function $\zeta_{s,\delta}$ from
\eqref{4.13}. It then  follows from \eqref{5.9} that
\begin{multline}\label{5.10}
\sup_{t\in(0,T)}\int_{\Omega(s+\delta)}|u(x,t)|^{\eta+1}dx+\sup_{t\in(0,T)}
\int_{\Omega(s+\delta)}|u_x(x,t)|^2\,dx\\+
\frac{d_{10}}{4}\int_{Q_T(s+\delta)}|(u^{\frac{n+2}2})_{xxx}|^2\, dx\,dt\leq
\int_{\Omega(s)}(|u_{0x}|^2 \, dx+|u_0|^{\eta+1})\,dx+D_{6}\tilde R,\\
\tilde R:=
 \int_{Q_T(s)}
u^{3m-3n+2} \,dx\,dt+\int_{Q_T(s)} u^{\frac{3m+3\eta+1-n}2}\, dx\,dt +\int_{Q_T(s)}
u^{n+3\eta-1}\, dx\,dt+\\\delta^{-6}\int_{Q_T(s)}u^{n+2}\, dx\,dt+\\
\delta^{-2}\int_{Q_T(s)}u^{2m-n+2}\,dx\,dt+\delta^{-2}
\int_{Q_T(s)}u^{n+2\eta}\, dx\,dt+\delta^{-2}\int_{Q_T(s)}u^{m+\eta+1}\, dx\,dt+\\\delta^{-3}
\int_{Q_T(s)}u^{\frac{3m-n+4}2}\,dx\,dt:=
\sum_{i=1}^8\delta^{-\chi_i}
\int_{Q_T(s)}u^{\xi_i}\, dx\, dt.
\end{multline}

{We shall use the inequality (\ref{5.10}) to derive a  system
of functional relationships, which will allow us to implement the Stampacchia
Lemma, Lemma 6.2.} {In order to undertake similar computations to those undertaken in
\S 4, it suffices to guarantee that the following inequalities hold:}
\begin{equation}\label{5.11}
\xi_i>1+\eta,\qquad i=1,2,\ldots,8.
\end{equation}

First we  ensure that
\begin{equation}\label{5.12}
\xi_3=n+3\eta-1>1+\eta\Leftrightarrow\eta>1-\frac n2:=\eta_{min}.
\end{equation}
Next we deduce a restriction {on} $m$ by considering
\begin{equation}\label{5.13}
\xi_1=3m-2n+2>1+\eta\Leftrightarrow\frac32(2m-n)+1+\Bigl(1-\frac
n2\Bigr)>1+\eta.
\end{equation}
Together, \eqref{5.12} and \eqref{5.13} yield that
\begin{equation}\label{5.14}
\eta_{min}=1-\frac n2<\eta<\eta_{min}+\frac32(2m-n).
\end{equation}
There exists $\eta$ satisfying (\ref{5.14}) iff
\begin{equation}\label{5.15}
2m-n>0.
\end{equation}
Next it is easy to see that
$$
\xi_2=\frac{3m+3\eta+1-n}2=\frac{(n+3\eta-1)+(3m-2n+2)}2.
$$
Therefore the inequality
\begin{equation}\label{5.16}
\xi_2>1+\eta
\end{equation}
follows from \eqref{5.12} and \eqref{5.13}. It is  easy to check
that the other inequalities in \eqref{5.11} can be satisfied by an
appropriate choice of $\eta$ if conditions \eqref{5.12} and
\eqref{5.13} are satisfied.

As result of \eqref{5.11}, the following Gagliardo--Nirenberg
interpolation inequalities {hold} for $i=1,2,\ldots,8,$
\begin{multline}\label{5.17}
\int_{\Omega(s+2\delta)}u^{\xi_i}dx\leq
D_{7}\Biggl(\int_{\Omega(s+\delta)}\bigl|
\bigl((\zeta_{s+\delta,\delta}^\frac6{n+2}u)^{\frac{n+2}2}\bigr)_{xxx}\bigr|^2dx
\Biggr)^{\frac{\theta_i\xi_i}{n+2}}\\\times \Biggl(
\int_{\Omega(s+\delta)}(\zeta_{s+\delta,\delta}^{\frac6{n+2}}u)^{\eta+1}dx\Biggr)
^{\frac{(1-\theta_i)\xi_i}{\eta+1}},
\end{multline}
where $\theta_i=\frac{(n+2)(\xi_i-\eta-1)}{\xi_i(5\eta+n+7)},$
$i=1,2,\ldots,8.$ We now wish  to guarantee  that
\begin{equation} \label{5.17a}
\frac{\theta_i\xi_i}{n+2}<1, \quad i=1,2,\ldots,8.
\end{equation}
From the definition of $\theta_i,$  it follows that (\ref{5.17a})
holds iff
\begin{equation}\label{5.18}
\eta>\frac{\xi_i-n-8}6,\quad i=1,2,\ldots,8.
\end{equation}
It is easy to check that  all the inequalities in (\ref{5.11}) and
(\ref{5.18}) hold for some $\eta$ in the interval \eqref{5.14} if
condition \eqref{5.15} is satisfied.

Therefore we  deduce from \eqref{5.10}, \eqref{5.17}  the
following inequalities
\begin{multline}\label{5.19}
\int_{Q_T(s+\delta)}u^{\xi_i}\, dx\,dt\leq
D_{8}T^{1-\frac{\theta_i\xi_i}{n+2}}\biggl(\sum_{j=1}^8\delta^{-\chi_j}
\int_{Q_T(s)}u^{\xi_j}\,
dx\,dt+H_0(s)\biggr)^{1+\frac{6(\xi_i-\eta-1)}{5\eta+n+7}},
\end{multline}
 where
$H_0(s):=\int_{\Omega(s)}(u_0^{1+\eta}+|u_{0x}|^2)\,dx=0,$
 for all $0<s<s+\delta<a.$
  From \eqref{5.19} and Lemma 6.2,  the
conclusion of Theorem \ref{t:5.1} now follows.
\end{proof}

\begin{Remark} The method used here for the qualitative analysis of
conditions which guarantee the finite speed of propagation property, should be possible to modify, modulo
various technical difficulties,  to yield
quantitative predictions for propagation rates, {\cite{GS,HSh,Sh2}.   We hope
to perform such an analysis in} a forthcoming paper.
\end{Remark}

\section{Appendix A}

\setcounter{equation}{0}

\begin{Lemma}\label{L.6.1}
(\textbf{Stampacchia's  Lemma \cite{St}}). Assume that a given non-negative,
nonincreasing  function $g:[0, \infty)\to R^1_+$ satisfies
$$
g(s+\delta)\leqslant c_0 (\delta^{-\alpha}g(s))^\beta,\quad
\forall s>0,\,\,\delta>0,
$$
for some real numbers $c_0>0$, $\alpha>0$, $\beta>1$. Then
$$
g(s_0)=0, \quad
s_0:=2^{\frac{\beta}{\beta-1}}(c_0\,g(0)^{\beta-1})^{\frac{1}{2
\beta}}.
$$
\end{Lemma}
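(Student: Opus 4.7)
The plan is to use a classical dyadic iteration argument. I would define an increasing sequence $s_n \nearrow s_0$ and show that the values $g(s_n)$ decay geometrically to zero; combined with the fact that $g$ is nonincreasing, this forces $g(s_0)=0$. The threshold $s_0$ will then be determined by the requirement that the geometric decay actually starts, i.e.\ by a single scalar inequality relating $c_0$, $g(0)$, $\alpha$ and $\beta$.

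Concretely, for $s_0>0$ to be pinned down, I would set
\[
s_n := s_0\bigl(1-2^{-n}\bigr), \qquad \delta_n := s_{n+1}-s_n = s_0\cdot 2^{-(n+1)}, \qquad n\ge 0,
\]
so $s_n\nearrow s_0$. Applying the hypothesis with $s=s_n$, $\delta=\delta_n$ (and noting that it reads $g(s+\delta)\le c_0\,\delta^{-\alpha\beta}g(s)^\beta$) gives the recursion
\[
a_{n+1} \le c_0\, s_0^{-\alpha\beta}\, 2^{\alpha\beta(n+1)}\, a_n^{\beta}, \qquad a_n := g(s_n).
\]
I would then establish by induction that $a_n\le g(0)\,r^n$ for the geometric ratio
\[
r := 2^{-\alpha\beta/(\beta-1)}\in(0,1).
\]
The point of this particular choice is that $2^{\alpha\beta n}$ from the recursion is exactly cancelled by $r^{(\beta-1)n}$ arising when the induction hypothesis is raised to the $\beta$-th power, so the inductive step collapses to the single $n$-independent inequality
\[
c_0\, s_0^{-\alpha\beta}\, 2^{\alpha\beta\cdot\beta/(\beta-1)}\, g(0)^{\beta-1}\le 1.
\]
Solving this inequality for $s_0$ yields the threshold value stated in the lemma (up to an explicit rearrangement of exponents).

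Once the geometric decay $a_n\le g(0)\,r^n\to 0$ is in hand, the conclusion follows by monotonicity: since $s_n<s_0$ for every $n$ and $g$ is nonincreasing, $g(s_0)\le g(s_n)=a_n$ for every $n$, and letting $n\to\infty$ gives $g(s_0)=0$. The only delicate step is the identification of the correct ratio $r$: it must be chosen so that the growing prefactor $2^{\alpha\beta n}$ is balanced exactly against the contraction produced by iterating the superlinear term $a_n^\beta$ with $\beta>1$. After this balancing, the remaining work is entirely routine, and the role of the hypotheses $\alpha>0$ and $\beta>1$ becomes transparent: $\alpha>0$ makes the recursion sensitive to the shrinking step $\delta_n$, while $\beta>1$ is precisely what permits $r<1$ and hence the decay of $a_n$.
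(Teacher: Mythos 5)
The paper gives no proof of this lemma at all --- it is quoted from Stampacchia's monograph \cite{St} --- so the only benchmark is the statement itself, and your dyadic iteration is exactly the classical argument. The recursion $a_{n+1}\le c_0 s_0^{-\alpha\beta}2^{\alpha\beta(n+1)}a_n^{\beta}$, the choice $r=2^{-\alpha\beta/(\beta-1)}$, the cancellation that collapses the induction to one $n$-independent inequality, and the final monotonicity step $g(s_0)\le g(s_n)=a_n\to 0$ are all correct. One small technical point: the hypothesis is stated only for $s>0$, while your first iteration step uses $s=s_0(1-2^{-0})=0$; this is repaired either by noting $g(s)\le g(0)$ for $s>0$ and letting $s\downarrow 0$ in the hypothesis, or by starting the iteration at an arbitrary $s_1>0$ and shifting.

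The one inaccurate claim is your last sentence, that solving $c_0\,s_0^{-\alpha\beta}\,2^{\alpha\beta^{2}/(\beta-1)}\,g(0)^{\beta-1}\le 1$ ``yields the threshold value stated in the lemma (up to an explicit rearrangement of exponents).'' It does not: it yields $s_0=2^{\beta/(\beta-1)}\bigl(c_0\,g(0)^{\beta-1}\bigr)^{1/(\alpha\beta)}$, whereas the printed statement has exponent $1/(2\beta)$; the two coincide only when $\alpha=2$. A scaling check confirms your value is the correct one: $g_\lambda(s):=g(\lambda s)$ satisfies the hypothesis with $c_0$ replaced by $c_0\lambda^{-\alpha\beta}$, so any valid formula for $s_0$ must scale like $(c_0)^{1/(\alpha\beta)}$, and the printed exponent $1/(2\beta)$ is evidently a misprint (valid precisely for $\alpha=2$). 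You should flag this discrepancy explicitly rather than asserting agreement; with that emendation your proof is complete, and since the paper only uses the qualitative conclusion that $g$ vanishes at some finite, explicitly controlled $s_0$ (and in Sections 4--5 actually invokes the systems version, Lemma 6.2), nothing downstream is affected.
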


\bigskip
For  given $(\beta_1, \cdots, \beta_k) \in \Bbb R^k$, we denote
$$
\beta = \prod^k_{j=1} \beta_j, \quad
\bar\beta_i=\frac{\beta}{\beta_i}=\prod^k_{j=1, j\neq i}\beta_j.
$$

\begin{Lemma}\label{L.6.2} (\textbf{A Stampacchia  Lemma for Systems \cite{GS}}). Assume that $k$
given nonnegative non-increasing functions,  $g_i: [0, \infty)\to [0,
\infty)$, $i=1, \cdots , k$, satisfy {
$$
{g_i(s+\delta)}\leqslant c_i \left( \sum^k_{i=1}
\delta^{-\alpha_i} g_i(s)\right)^{\beta_i},\quad \forall s>0,
\,\,\delta>0, \,\,i=1, \cdots , k,
$$
for some real numbers} $c_i>0$, $\beta_i>1$; $\alpha_i\geqslant 0$,
$i=1,\cdots, k$, and $\alpha_i>0$ for $i=1, \cdots, l$,
$l\geqslant 1$. Denote
$$
g(s)=\sum^k_{i=1}\left(\overline{
c_i^{\bar\beta_i}}\right)\left(g_i(s)\right)^{\bar\beta_i},
$$
and assume that the function $Q(s)$ defined by
$$
Q(s)=
\begin{cases}
k^\beta \sum\limits^k_{i=l+1} c^{\bar\beta_i}_i
\left(\overline{c_i^{\bar\beta_i}}
\right) ^{1-\beta_i} g(s)^{\beta_i-1},& \text{if}\quad l<k, \\
0, &\text{if}\quad l=k,
\end{cases}
$$
satisfies the constraint $Q(s_1)<1$ at some point $s_1\geqslant 0$. Then there exists a
positive constant $C>1$, which depends on $k, l, \alpha_i, \beta_i,$
and $Q(s_1)$,  such that
$$
g_i(s_0)=0,\quad \forall \quad i=1, \cdots, l,
$$
where
$$
s_0:= s_1+C \sum^l_{i=1}\left(c^{\bar\beta_i}_i
(\overline{c^{\bar\beta_i}_i})^{1-\beta_i} (g(s_1))^{\beta_i-1}
\right)^{\frac{1}{\alpha_i \beta}}.
$$
\end{Lemma}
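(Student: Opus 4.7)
The plan is to reduce the system of $k$ functional inequalities to a single scalar Stampacchia-type inequality for the composite quantity $g(s)$, and then invoke Lemma 6.1 to obtain the vanishing point $s_0$.

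First I would homogenize the exponents on the right-hand sides. The $i$-th given inequality carries exponent $\beta_i > 1$, so raising it to the power $\bar\beta_i = \beta/\beta_i$ yields a bound with the common exponent $\beta$:
$$g_i(s+\delta)^{\bar\beta_i} \leq c_i^{\bar\beta_i} \Bigl(\sum_{j=1}^k \delta^{-\alpha_j} g_j(s)\Bigr)^{\beta}.$$
Multiplying the $i$-th such inequality by $\overline{c_i^{\bar\beta_i}}$ and summing over $i$ produces the composite quantity $g(s+\delta)$ on the left, giving
$$g(s+\delta) \leq K \Bigl(\sum_{j=1}^k \delta^{-\alpha_j} g_j(s)\Bigr)^{\beta}, \qquad K := \sum_{i=1}^k (\overline{c_i^{\bar\beta_i}})\,c_i^{\bar\beta_i}.$$

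Second, I would re-express the right-hand side in terms of $g(s)$ alone. The defining relation $(\overline{c_j^{\bar\beta_j}})g_j(s)^{\bar\beta_j} \leq g(s)$ gives $g_j(s) \leq \bigl((\overline{c_j^{\bar\beta_j}})^{-1}g(s)\bigr)^{1/\bar\beta_j}$. I then split the sum over $j$ into the range $1 \leq j \leq l$ (where $\alpha_j > 0$) and the range $l+1 \leq j \leq k$ (where $\alpha_j = 0$). The contribution from the second range, after raising the outer bracket to the power $\beta$ and applying the elementary inequality $(a_1+\cdots+a_k)^\beta \leq k^\beta (a_1^\beta + \cdots + a_k^\beta)$, is exactly $Q(s)\,g(s)$ by construction of $Q$. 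Since each $g_i$, and therefore $g$ and $Q$, is nonincreasing, the hypothesis $Q(s_1)<1$ persists for all $s \geq s_1$; hence the "bad" contribution can be absorbed into the left-hand side, yielding, for a constant $C_\star$ depending only on $k,\alpha_i,\beta_i,c_i,Q(s_1)$,
$$g(s+\delta) \leq C_{\star}\Bigl(\sum_{j=1}^l \delta^{-\alpha_j} g(s)^{1/\bar\beta_j}\Bigr)^{\beta}, \qquad s\geq s_1,\ \delta>0.$$

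Third, I would extract from this consolidated inequality $l$ individual scalar Stampacchia inequalities, one for each dominant index: using $(a_1+\cdots+a_l)^\beta \leq l^\beta \max_i a_i^\beta$ and the identity $\beta/\bar\beta_j = \beta_j$,
$$g(s+\delta) \leq l^{\beta}\,C_\star\,\delta^{-\alpha_i \beta}\,g(s)^{\beta_i}, \qquad i=1,\dots,l.$$
To each such inequality Lemma 6.1 applies, furnishing a shift $\Delta_i$ beyond which $g$ vanishes; the standard geometric iteration with $\delta_n = \delta_0\,2^{-n}$ and comparison of the resulting constants produces the explicit expression $\Delta_i = C\bigl(c_i^{\bar\beta_i}(\overline{c_i^{\bar\beta_i}})^{1-\beta_i}g(s_1)^{\beta_i-1}\bigr)^{1/(\alpha_i\beta)}$. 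Summing these shifts yields the announced $s_0 - s_1$, and $g(s_0)=0$ then forces $g_i(s_0)=0$ for every $i=1,\dots,l$ by the nonnegativity of each summand in $g$.

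The principal obstacle is the absorption step in the second paragraph: the $\alpha_j = 0$ terms carry no $\delta$-weight, so they cannot be shrunk by choosing $\delta$ small. They can be controlled only by the smallness of $g(s)$ itself — and, crucially, only because $\beta_j > 1$ for those indices makes the corresponding contribution super-linear in $g$, which is exactly what is captured by the factor $g(s)^{\beta_i-1}$ inside $Q(s)$. The monotonicity of each $g_i$ is then what propagates the condition $Q(s)<1$ along the entire iteration; without it, the scheme would stall.
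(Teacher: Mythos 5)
Your reduction runs into a genuine obstruction at the absorption step (note the paper itself only cites \cite{GS} for this lemma, so the proof must stand on its own). After homogenizing and summing you correctly arrive, using the sharp convexity constant $k^{\beta-1}$ rather than $k^{\beta}$ (this is what makes the coefficients come out exactly as in $Q$), at
$$
g(s+\delta)\;\le\; Q(s)\,g(s)\;+\;k^{\beta}\Bigl(\textstyle\prod_{j=1}^{k}c_j^{\bar\beta_j}\Bigr)\sum_{i=1}^{l}\delta^{-\alpha_i\beta}\bigl(\overline{c_i^{\bar\beta_i}}\bigr)^{-\beta_i}\,g(s)^{\beta_i}.
$$
The dangerous term here is $Q(s)\,g(s)$, not $Q(s)\,g(s+\delta)$: since the $g_i$ are nonincreasing, $g(s)\ge g(s+\delta)$, so monotonicity runs in the wrong direction and you cannot move this term to the left-hand side and divide by $1-Q(s_1)$. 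The inequality you claim after absorption, $g(s+\delta)\le C_\star\bigl(\sum_{i\le l}\delta^{-\alpha_i}g(s)^{1/\bar\beta_i}\bigr)^{\beta}$, is strictly stronger than what the hypotheses give: for large $\delta$ its right-hand side tends to zero (all $\alpha_i>0$ for $i\le l$), whereas the true inequality only yields $g(s+\delta)\lesssim Q(s_1)\,g(s)$ in that regime; data that are essentially constant on an interval and then drop to zero are admissible under the hypotheses, and for them no pointwise rearrangement of the displayed inequality produces your bound — it already encodes, quantitatively, the compact-support conclusion you are trying to prove. The correct way to use $Q(s_1)<1$ is to keep the term and iterate: set $s_{n+1}=s_n+\delta_n$, choose $\delta_n$ so that the superlinear terms are at most $(\theta-Q(s_1))\,g(s_n)$ for a fixed $\theta\in(Q(s_1),1)$, conclude $g(s_{n+1})\le\theta\,g(s_n)$, and observe that because every $\beta_i>1$ the admissible $\delta_n$ may be taken geometrically decreasing, so $\sum_n\delta_n$ converges and is bounded by the stated $s_0-s_1$; then $g(s_0)\le\theta^{n}g(s_1)$ for every $n$ forces $g(s_0)=0$ and hence $g_i(s_0)=0$. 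This also explains why the constant $C$ in the statement depends on $Q(s_1)$: it enters through the contraction rate of the iteration, not through a factor $1/(1-Q)$.

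A secondary, repairable point: even if your absorbed inequality were available, the bound $\bigl(\sum_{i\le l}a_i\bigr)^{\beta}\le l^{\beta}\max_i a_i^{\beta}$ does not hand you $l$ separate inequalities each valid for all $(s,\delta)$ — the maximizing index depends on $(s,\delta)$ — so Lemma 6.1 cannot be applied to each index independently and the shifts then summed; one must run a single iteration that accommodates the whole sum of powers (choosing at each step the $\delta_n$ dictated by the worst term), which is exactly what the iteration sketched above does.
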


\begin{Remark} If $l=k$, then $s_1=0$ in Lemma \ref{L.6.2}\end{Remark}

\begin{Lemma}\label{L.6.3}(\textbf{Generalized Bernis inequalities for periodic functions \cite{GS}}). Let $\Omega=(a,b)\subset\Bbb
R^1$, $n\in (\frac{1}{2},3)$, and let $u\in C^1(\bar \Omega)\cap
H^3_\text{loc}(\{u>0\})$ be an  arbitrary $|\Omega|$  periodic
function such that
$$
u\geqslant 0,\qquad u_x\in L^2(\Omega), \qquad \int_{\{u>0\}}
u^n|u_{xxx}|^2\,dx<\infty.
$$
Then $u^{\frac{n+2}{6}}\in W^{1,6}(\Omega)$, $u^{\frac{n+2}{3}}\in
W^{2,3}(\Omega)$, $u^{\frac{n+2}{2}}\in W^{3,2}(\Omega):=H^3(\Omega)$,
and there exists a positive constant, $C>1$, which depends on $n$ only,  such
that
$$
\int_\Omega
\zeta^6\left(\left|\left(u^{\frac{n+2}{6}}\right)_x\right|^6 +
\left|\left(u^{\frac{n+2}{3}}\right)_{xx}\right|^3 +
\left|\left(u^{\frac{n+2}{2}}\right)_{xxx}\right|^2\right)\,dx
$$
$$
\leqslant C \int_{\{u>0\}}\zeta^6 u^n|4_{xxx}|^2\,dx+ C
\int_\Omega|\zeta_x|^6 u^{n+2}\,dx.
$$
\end{Lemma}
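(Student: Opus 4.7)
The plan is to reduce the three terms on the left-hand side to the classical Bernis quantities $\int \zeta^6 u^{n-4}u_x^6\,dx$, $\int \zeta^6 u^{n-1}u_{xx}^3\,dx$, and $\int \zeta^6 u^n u_{xxx}^2\,dx$ by the chain rule. Writing $v_1 = u^{(n+2)/6}$, $v_2 = u^{(n+2)/3}$, $v_3 = u^{(n+2)/2}$ and differentiating, one gets $(v_1)_x^6 = c\,u^{n-4}u_x^6$, while $(v_2)_{xx}$ and $(v_3)_{xxx}$ each expand, inside $\{u>0\}$, into polynomial combinations of $u^{n-4}u_x^6$, $u^{n-1}u_{xx}^3$, $u^{n-2}u_x^2 u_{xx}^2$, and $u^n u_{xxx}^2$ (the $(n+2)/k$ powers are chosen exactly so that the resulting monomials are \emph{homogeneous of total degree} $n+2$ in $u$ relative to the number of derivatives). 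A single application of Young's inequality of the type $u^{n-2}u_x^2 u_{xx}^2 \le c(u^{n-4}u_x^6)^{1/3}(u^{n-1}u_{xx}^3)^{2/3}$ handles the mixed cross term, and then the whole estimate reduces to proving the two \emph{local} Bernis inequalities
\begin{equation}\label{BernisA}
\int_\Omega \zeta^6 u^{n-4} u_x^6\,dx \le C\!\int_{\{u>0\}}\! \zeta^6 u^n u_{xxx}^2\,dx + C\!\int_\Omega |\zeta_x|^6 u^{n+2}\,dx,
\end{equation}
\begin{equation}\label{BernisB}
\int_\Omega \zeta^6 u^{n-1} u_{xx}^3\,dx \le C\!\int_{\{u>0\}}\! \zeta^6 u^n u_{xxx}^2\,dx + C\!\int_\Omega |\zeta_x|^6 u^{n+2}\,dx.
\end{equation}

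To establish \eqref{BernisA}--\eqref{BernisB} I would regularize by replacing $u$ with $u_\eta := u + \eta$ so that every power is well-defined and smooth on the whole of $\Omega$, prove the inequalities for $u_\eta$ with a constant uniform in $\eta$, and let $\eta \downarrow 0$ using Fatou's lemma. On $\{u_\eta>0\}=\Omega$, I would start from the algebraic identity $u_\eta^{n-4}u_x^5 = \frac{6}{n+2}\partial_x(u_\eta^{(n+2)/6})^5 \cdot u_\eta^{(-5)(n-4)/6+\ldots}$ (tuning so that a single derivative produces the desired power structure) and integrate by parts, distributing derivatives onto $\zeta^6$ (giving the $|\zeta_x|^6 u^{n+2}$ term after Hölder with exponents $6,6,6,6,6,6$) and onto $u_x$ (giving factors of $u_{xx}$, then of $u_{xxx}$ through a second IBP). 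The same strategy works for \eqref{BernisB} by integrating by parts against $\zeta^6 u_\eta^{n-1} u_{xx}^2$, converting the third derivative of the cube into $u_{xxx}$ times lower-order factors. Periodicity guarantees that the endpoint terms cancel at each integration by parts so that \emph{only} the $\zeta_x$ terms survive.

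Each IBP produces several remainder terms with powers of $u$ not directly matching the desired monomials; these are recombined by repeated application of Young's inequality with exponents $(6,3,2)$ (matched precisely to the scaling $(n-4,n-1,n)$ against $(u_x,u_{xx},u_{xxx})$). After a finite number of such balancing steps the bad terms on the right are either absorbed into the left-hand side (with small coefficient $\delta>0$ taken absorbable) or reduced to multiples of $\int|\zeta_x|^6 u^{n+2}$. The constraint $n \in (1/2, 3)$ is needed exactly to ensure that the exponents appearing in the IBP produce strictly positive powers of $u_\eta$ and that the $\eta \downarrow 0$ limit is legitimate (the quantity $u^n u_{xxx}^2$ is integrable on $\{u>0\}$ by hypothesis and controls the limiting integrands through the uniform bound just established).

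The main obstacle is bookkeeping: one has to identify \emph{which} integration-by-parts ordering causes the cross terms to cancel rather than blow up, and to choose the right Young's-inequality exponents so that every remainder either fits on the left (to be absorbed) or matches the single boundary-type term $\int|\zeta_x|^6 u^{n+2}$. A secondary technical point is justifying the limit $\eta \downarrow 0$: the level-set structure of $u$ and the hypothesis $\int_{\{u>0\}} u^n u_{xxx}^2 < \infty$ together ensure pointwise convergence of the approximate integrands, and Fatou's lemma upgrades this to the stated inequality, giving also the claimed Sobolev regularity $u^{(n+2)/6}\in W^{1,6}$, $u^{(n+2)/3}\in W^{2,3}$, $u^{(n+2)/2}\in H^3$ as a byproduct.
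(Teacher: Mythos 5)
The paper itself offers no proof of this lemma: it is quoted verbatim (as Lemma 6.3) from \cite{GS}, where it appears as a weighted localization of Bernis' integral inequalities \cite{B6}, so there is no internal argument to compare yours against. Judged on its own, your outline has the right skeleton --- reduce the three left-hand terms by the chain rule to the monomials $u^{n-4}u_x^6$, $u^{n-2}u_x^2u_{xx}^2$, $u^{n-1}|u_{xx}|^3$, $u^nu_{xxx}^2$, then close the estimate by weighted integration by parts and Young's inequality, with the commutator terms collected into $\int|\zeta_x|^6u^{n+2}\,dx$ --- but two steps you treat as routine are in fact where the lemma lives.

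First, the regularization $u_\eta=u+\eta$ does not do what you claim: adding a constant does not improve regularity, so $u_\eta$ is still only $C^1(\bar\Omega)\cap H^3_{\mathrm{loc}}(\{u>0\})$, and the integrations by parts over all of $\Omega$ are not justified. The genuine difficulty sits at the boundary of the positivity set: one must work on sets such as $\{u>\delta\}$ (a countable union of intervals) and show that the boundary contributions, e.g.\ $\zeta^6u^{n-3}u_x^5$ and $\zeta^6u^{n-2}u_x^3u_{xx}$, vanish as $\delta\to0$, using $u\in C^1$, $u\ge0$ and hence $u_x=0$ at interior zeros, together with the finiteness of $\int_{\{u>0\}}u^nu_{xxx}^2\,dx$. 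Second, you misidentify the role of $n\in(\frac12,3)$: it has nothing to do with ``strictly positive powers'' (the exponents $n-4$, $n-3$ are negative throughout the admissible range) nor with legitimacy of the limit $\eta\downarrow0$. It is the algebraic condition under which Bernis' chain of integrations by parts closes --- the Young/absorption step produces a form that is positive only for $\frac12<n<3$ (and near a quadratic touchdown, boundary terms like $u^{n-3}u_x^5$ tend to zero only for $n>\frac12$); outside this range the unweighted inequalities are simply false. A proof that relegates the absorption to ``bookkeeping'' therefore has not engaged with the core of the statement. A minor further point: the middle term $|(u^{(n+2)/3})_{xx}|^3$ is usually not obtained by a direct integration by parts against $\zeta^6u^{n-1}u_{xx}^2$, but by Gagliardo--Nirenberg interpolation between the $W^{1,6}$ and $H^3$ pieces already established; your route there would need separate justification.
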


\bigskip
\par\noindent{\bf Acknowledgment.} {A. N.-C.} would like to acknowledge the support of the M.R. Saulson Research Grant.
{A. Sh.} would like to acknowledge the hospitality and support
of the Math Department of the Technion during visits to Haifa,
Israel.

\newpage

\end{document}